\newcommand{\R}{\mathbb R}
\newcommand{\N}{\mathbb N}
\newcommand{\C}{\mathbb C}
\newcommand{\Z}{\mathbb{Z}}
\newcommand{\A}{\mathcal{A}}
\newcommand{\F}{\mathcal{F}}
\newcommand{\M}{\mathcal{M}}
\newcommand{\OO}{\mathcal{O}}
\renewcommand{\P}{\mathcal{P}}
\newcommand{\Q}{\mathcal{Q}}
\newcommand{\T}{\mathcal{T}}
\newcommand{\U}{\mathcal{U}}
\newcommand{\V}{\mathcal{V}}
\newcommand{\al}{\alpha}
\newcommand{\be}{\beta}
\newcommand{\de}{\delta}
\newcommand{\g}{\gamma}
\newcommand{\la}{\lambda}
\newcommand{\s}{\sigma}
\newcommand{\Om}{\Omega}
\newcommand{\om}{\omega}
\newcommand{\e}{\eta}
\newcommand{\ep}{\varepsilon}
\newcommand{\del}{\partial}
\newcommand{\1}{\mathds{1}}
\newcommand{\2}{\tfrac{1}{2}}
\renewcommand{\k}{\mathbf{k}}
\renewcommand{\l}{\mathbf{l}}
\newcommand{\nxn}{{n\times n}}
\newcommand{\dxd}{{d\times d}}
\newcommand{\ti}[1]{\tilde{#1}}
\newcommand{\wti}[1]{\widetilde{#1}}
\newcommand{\Ceins}{C_1}
\newcommand{\Czwei}{C_2}
\newcommand{\Cdrei}{C_3}
\newcommand{\Cvier}{C_4}
\renewcommand{\t}{\square} 		
\newcommand{\tit}{\ti{\t}}		
\newcommand{\st}{\boxtimes} 		
\newcommand{\Un}[1]{{\cup\;\!\!#1}}	
\newcommand{\UT}{\Un{\T}}		
\newcommand{\typ}[1]{\imath(#1)}	
\newcommand{\typt}{{\typ{\t}}}		
\newcommand{\typts}{{\scriptscriptstyle{\imath\!\!\:(\!\!\:\t\!\!\:)}}}		
\newcommand{\typv}[1]{\jmath(#1)}	
\newcommand{\I}{\Lambda_N}		
\newcommand{\Ieins}{\Lambda_1}		
\newcommand{\Pe}{\P^{\text{ext}}}	
\newcommand{\hloc}{H_{\text{loc}}}	
\newcommand{\exi}{\,\exists\,}
\newcommand{\fa}{\,\forall\,}
\newcommand{\bsn}{{\be,\s,m,N}}
\newcommand{\sn}{{\s,m,N}}
\newcommand{\sta}{\varphi}		
\newcommand{\lle}{\,\le\,}
\newcommand{\gge}{\,\ge\,}
\newcommand{\aasymp}{\,\asymp\,}
\newcommand{\sumigt}{\textstyle\sum\limits_{i\in I}\displaystyle\!\g_i|\T^i|}
\newcommand{\trees}[1]{\mathbf{T}^\Sigma_{#1}}	
\newcommand{\no}{\e}			
\DeclareMathOperator{\Sim}{Sim}
\newcommand{\Uept}[1][i]{\mathcal{N}_\ep(\st^{#1})}
\newcommand{\Urt}[1][i]{\mathcal{N}_{2r}(\st^#1)}
\newcommand{\SEept}[1][i]{\mathrm{SE}_\ep(\st^{#1})}
\newcommand{\co}{{c_0}}
\newcommand{\ci}{{c_1}}
\newcommand{\cii}{{c_2^{\scriptscriptstyle\R}}}
\newcommand{\ciii}{{c_3}}
\newcommand{\cv}{{c_8}} \newcommand{\cva}{c_8} 
\newcommand{\cvi}{{c_9}}
\newcommand{\cvii}{{c_{31}}}
\newcommand{\cix}{{c_{10}}}
\newcommand{\cx}{{c_{11}}}
\newcommand{\cxi}{{c_{15}}}
\newcommand{\cxc}{{c_{13}}}
\newcommand{\cxiii}{{c_{17}}}
\newcommand{\cxiv}{{c_{14}}}
\newcommand{\cxv}{{c_{16}}}
\newcommand{\cxvi}{{c_{12}}}
\newcommand{\cxvii}{{c_{18}}}
\newcommand{\cxviii}{{c_{19}^{\scriptscriptstyle\R}}}
\newcommand{\cxix}{{c_{20}}}
\newcommand{\cxx}{{c_{21}}}
\newcommand{\cxxii}{{c_{22}}}
\newcommand{\cxxiii}{{c_{23}}}
\newcommand{\cxxiv}{{c_{24}}}
\newcommand{\cxxv}{{c_{25}^{\scriptscriptstyle\R}}}
\newcommand{\cxxvi}{{c_7}}
\newcommand{\cxxvii}{{c_4}}
\newcommand{\cxxviii}{{c_5}}
\newcommand{\cxxix}{{c_6^{\scriptscriptstyle\R}}}
\newcommand{\cxxx}{c_{30}}
\newcommand{\cxxxii}{{c_{32}}}
\newcommand{\cxxxiii}{{c_{33}}}
\newcommand{\cxxxiv}{{c_{34}}}
\newcommand{\cxxxv}{{c_{35}}}
\newcommand{\cxxxvi}{{c_{36}^{\scriptscriptstyle\R}}}
\newcommand{\cxxxvii}{{c_{37}}}
\newcommand{\cxxxviii}{{c_{38}}}
\newcommand{\cxlii}{{c_{42}}}
\newcommand{\cxliii}{{c_{41}}}
\newcommand{\cxliv}{{c_{39}}}
\newcommand{\cxlv}{{c_{43}^i}}
\newcommand{\cxlvi}{{c_{40}}}
\newcommand{\cxlvii}{{c_{44}}}
\newcommand{\cxlviii}{{c_{45}}} 
\newcommand{\cxlix}{{c_{46}}}
\newcommand{\cl}{{c_{47}^{\scriptscriptstyle\R}}}
\newcommand{\cli}{{c_{26}}}
\newcommand{\clii}{{c_{27}}}
\newcommand{\cliii}{{c_{28}}}
\newcommand{\cliv}{{c_{29}}}
\newcommand{\clv}{{c_{48}}}		
\newcommand{\clvi}{{c_{49}}}	
\theoremstyle{plain}
 \newtheorem{thm}{Theorem}[section]
 \newtheorem{lem}[thm]{Lemma}
 \newtheorem{corol}[thm]{Corollary}
\theoremstyle{definition}
 \newtheorem{rem}[thm]{Remark}
\theoremstyle{remark}
 \newtheorem*{rem*}{Remark}
\DeclareMathOperator{\dist}{dist}
\DeclareMathOperator{\hull}{hull}
\DeclareMathOperator{\SO}{SO}
\DeclareMathOperator{\interior}{int}
\DeclareMathOperator{\image}{im}
\newcommand{\sod}{\SO(d)}
\newcommand{\sot}{\SO(2)}
\title{Spontaneous Breaking of Rotational Symmetry with Arbitrary Defects and a Rigidity Estimate}
\author{Simon Aumann\\[0.5ex] \textit{\small{Mathematisches Institut, Ludwig-Maximilians-Universit\"at M\"unchen}}\\[-0.5ex] \textit{\small{Theresienstr.\ 39, D-80333 M\"unchen, Germany}}\\[-0.5ex] \small{aumann@math.lmu.de}}
\begin{document}

\maketitle

\begin{abstract}
 The goal of this paper is twofold. First we prove a rigidity estimate, which generalises the theorem on geometric rigidity of Friesecke, James and M\"uller to 1-forms with non-vanishing exterior derivative.
 
 Second we use this estimate to prove a kind of spontaneous breaking of rotational symmetry for some models of crystals, which allow almost all kinds of defects, including unbounded defects as well as edge, screw and mixed dislocations, i.e.\ defects with Burgers vectors.
\end{abstract}

\begin{small}
\noindent
\textit{AMS Mathematics Subject Classification 2010:} 60K35, 82D25, 82B21, 53C24\\
\textit{Keywords:} rigidity estimate, crystal, spontaneous symmetry breaking, arbitrary defects
\end{small}

\section{Introduction}

Condensed matters in solid state usually have the structure of a crystal: The molecules are arranged in some regular pattern. Real crystals are in fact not perfectly regular, but form a perturbation of the pattern. They also have defects. One can describe a crystal using the fundamental approach of statistical mechanics. Some probability distributions determine the location of the molecules. Their local interaction should specify the distribution. One wants to extract the global behaviour of the crystal from these local interactions. This is not well understood in a mathematically rigorous sense yet.

One question to tackle is whether the crystal globally preserves or breaks symmetries of the local interactions. Richthammer showed that the translational symmetry is preserved in a quite general two-dimensional setting, see \cite{r7}. But in the case of rotational symmetry one expects a different outcome: rotational symmetry should be broken. Merkl and Rolles showed this for a toy model of a crystal without defects in \cite{mr9}. This was extended by Heydenreich, Merkl and Rolles in \cite{hmr13} to a model which allows simple defects.

In the present work, it is shown that the rotational symmetry is broken (in a weaker form) for a class of models where almost all kinds of defects are allowed. Let us describe this class informally. A model consists of a tessellation, some local Hamiltonians, a measure for the surface of the defects and some parameters. The crystal shall have a favourite structure, which depends on the considered matter and is described by the tessellation. Thus the molecules form locally a perturbation of the tessellation. A local perturbation costs some energy, which is described by the local Hamiltonians. As already mentioned, the crystal may have various defects. In particular, there may be edge, screw and mixed dislocations, i.e.\ defects with Burgers vectors, as well as large unbounded defects. We only require that the size of a defect is larger than an arbitrary small, but fixed number. A defect is punished proportional to the size of its surface. This can be interpreted as a surface tension. Moreover, there is a chemical potential which favours a large number of molecules. 

Let us be a bit more precise. The crystal lives in a $d$-dimensional box ($d\ge2$) of size $N$ (with periodic boundary), and the centre of the molecules are given by a random set $\P$ of points in the box. A point configuration $\P$ determines a set $\T$ of tiles, which are locally a perturbation of the tessellation. Furthermore, it determines the quantity $S$ measuring the surface of the defects. The local Hamiltonian $\hloc(\t)$ gives the energy costs of the perturbed tile $\t$ in any way which fulfil a reasonable inequality. Then the global Hamiltonian is defined by
$$ H_\sn(\P) \,:=\, \sum_{\t\in\T} \hloc(\t) + \s S - m|\P| $$
with $\s>0$ and $m\in\R$. The three addends describe the local perturbation, the surface energy and a chemical potential. Using a Possion Point Process $\mu$ in the box as reference measure, the probability measure $P_\bsn$ is given by
$$ dP_\bsn \,:=\, \frac{1}{Z_\bsn} e^{-\be H_\bsn}\,d\mu $$
with inverse temperature $\be>0$ and partition sum $Z_\bsn$. Then we show (Theorem~\ref{thm:symmetrybreaking}) that there exists $\s_0(N,m)\asymp N^2+m$ such that for all $m\ge m_0$
$$\adjustlimits\lim_{\be\to\infty} \limsup_{N\to\infty} \sup_{\s\ge\s_0(N,m)} 
  E_\bsn\bigg[ \inf_{R\in\sod} \frac{1}{|\T|}\sum_{\t\in\T} \|V-R\|^2_{L^2(\t)}\bigg] \;=\; 0\,, $$
where $V:\UT\to\R^\dxd$ measures point-wise the deformation (rotation and scaling) of the crystal. Thus the crystal is globally close to a constant rotation $R\in\sod$, i.e.\ there is a long-range order in the crystal. But if the local Hamiltonians and the surface measure are chosen rotational invariant, which is possible and reasonable, the global Hamiltonian is rotational symmetric. Therefore the rotational symmetry is broken.

In order to prove this result, we follow the approach of Heydenreich, Merkl and Rolles. Their main ingredient is the theorem on geometric rigidity of Friesecke, James and M\"uller \cite[Theorem~3.1]{fjm2}. We first prove a more general rigidity estimate described below and apply it to prove the result stated above, using a more or less similar technique as Heydenreich, Merkl and Rolles. 

The main constraint of our theorem is that the limit is not uniform in the box size: $\s_0$ depends on $N$. But with the chosen method this is the best possible result, since one constant in the rigidity estimate is not scale-invariant. In order to get results uniform in the size of the box, one might have to use much more involved approaches like renormalisation. 

The already mentioned rigidity estimate is the other goal of this article. Results on geometric rigidity go back to a theorem of Liouville. It states that if the derivative of a smooth function $v:\R^d\supseteq M\to\R^d$ is point-wise a rotation, then the function is globally a rigid motion, i.e.\ its derivative is everywhere the same rotation.  A major step further was the now classical rigidity estimate of Friesecke, James and M\"uller \cite[Theorem~3.1]{fjm2}. They bounded the $L^2$-distance of the derivative from a constant rotation by a constant times the $L^2$-distance from the whole rotation group $\sod$. This was further generalised by M\"uller, Scardia and Zeppieri to fields with non-zero curl, at least in dimension $d=2$, see \cite[Theorem~3.3]{msz13}.

Here we consider matrix-valued functions $V:M\to\R^\dxd$ on an open, connected and bounded set $M\subset\R^d$ with smooth boundary in dimension $d\ge2$. We also identify such a function line by line with a vector of $1$-forms. We show (Theorem~\ref{thm:rigidity}) that the $L^2$-distance of $V$ from a single constant rotation $R\in\sod$ is bounded by the sum of a constant times the $L^2$-distance of $V$ from the rotation group $\sod$ and another constant times the $L^p$-norm (with $p\ge 2d/(2+d)$) of the (component-wise) exterior derivative $dV$ of $V$. We also determine the scaling of the constants (Lemma~\ref{lem:scale}). Note that one of them is not scale-invariant. If $V=dv$ for some function $v:M\to\R^d$ (which implies $dV=0$), this estimate reduces to \cite[Theorem~3.1]{fjm2}. It is also an extension of \cite[Theorem~3.3]{msz13}, which handles the case $d=2$ and $p=1$. 

This rigidity estimate is the content of Chapter~\ref{cha:rigidity}. In Chapter~\ref{cha:symmetrybreaking} we state the considered class of crystal models accurately and prove the result on the spontaneous breaking of the rotational symmetry. Finally we give two examples of concrete models. First we consider the two-dimensional triangular lattice. This yields a model analogous to the model considered in \cite{hmr13}. Then we draw our attention to a crystal whose favourite structure is the $d$-dimensional cubic lattice.

\section{A Rigidity Estimate} \label{cha:rigidity}

\subsection{Statement of the Rigidity Estimate}

Let $d\ge2$. We work with functions mapping to $\R^\dxd$ defined on an open, connected and bounded set $M\subset\R^d$ with smooth boundary. We identify such a matrix-valued function $V=(V_{ij})_{1\le i,j \le d}$ line by line with a vector $V=(V_i)_{1\le i\le d}$ of 1-forms $V_i=\sum_{j=1}^d V_{ij}dx_j$. Then the exterior derivative $dV=(dV_i)_{1\le i\le d}$ is a vector of 2-forms with components $dV_i=\sum_{k<l} (\del_kV_{il}-\del_lV_{ik})dx_k\wedge dx_l$ if the derivatives exist. For $p\ge1$, its $p$-norm is defined by
$$ \|dV_i\|_{L^p(M)}^p := \sum_{k<l} \big\|\del_kV_{il}-\del_lV_{ik}\big\|_{L^p(M)}^p 
\qquad\text{and}\qquad
 \|dV\|_{L^p(M)}^p := \sum_{i=1}^d \|dV_i\|_{L^p(M)}^p \,.$$
We say that $V\in L^2(M,\R^\dxd)$ satisfies $dV\in L^p(M)$ for some $p\ge1$ if there exist smooth functions $V^n\in L^2(M,\R^\dxd)$, $n\in\N$, such that $V^n\to V$ in $L^2$ as $n\to\infty$ and such that $(dV^n)_{n\in\N}$ is a Cauchy sequence in $L^p$. In that case we define $dV:=L^p\text{-}\lim_{n\to\infty} dV^n$. This limit is well-defined by the following remark.

\begin{rem*}
 For $k\in\N_0$, let $L^2\Om^k(M)$ denote the space of $k$-forms on $M$ whose coefficients are in $L^2(M)$. Other spaces of $k$-forms are defined analogously. Let $\nu \in L^2\Om^1(M)$ be a 1-form. Then a $2$-form $\om$ is the exterior derivative of $\nu$ in the weak sense, i.e.\ $d\nu=\om$, if $\langle\nu,\de\chi\rangle=\langle\om,\chi\rangle$ holds for all $2$-forms $\chi\in C_c^\infty\Om^2(M)$, where the codifferential $\de$ is the adjoint operator to $d$. Therefore the weak exterior derivative is unique. 
 In particular, if there are smooth $1$-forms $\nu_n\in C^\infty\Om^1(M)$ such that $\nu_n\to\nu$ in $L^2$ and $d\nu_n\to\psi$ in $L^p$ for a $2$-form $\psi\in L^p\Om^2(M)$, then $\psi=\om=d\nu$. Thus the limit is well-defined.

 Note that we did not require that the weak exterior derivative $\om$ of $\nu$ is in $L^p$, but we imposed the possibly stronger condition that we can approximate $\nu$ with smooth $1$-forms whose exterior derivatives converge in $L^p$. It is not relevant for our purposes whether these two conditions are equivalent. 
\end{rem*}

Now we can state the rigidity estimate of this paper.

\begin{thm} \label{thm:rigidity}
 Let $d\ge2$ and $M\subset\R^d$ be open, connected and bounded with smooth boundary. Let further $p\ge2d/(2+d)$.
 Then there exist constants $\Ceins=\Ceins(M)$ and $\Czwei=\Czwei(M,p)$ such that for all $V\in L^2(M,\R^\dxd)$ with $dV\in L^p(M)$ 
 there exists a rotation $R\in \sod$ with
 $$ \|V-R\|_{L^2(M)} \,\le\,
    \Ceins \|\dist(V,\sod)\|_{L^2(M)} +
    \Czwei \|dV\|_{L^p(M)} \,.$$
\end{thm}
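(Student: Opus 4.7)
The plan is to reduce the estimate to the classical rigidity theorem of Friesecke--James--M\"uller by decomposing $V$ into a closed part and a small ``correction''. Concretely, if I can write $V = U + W$ where $U$ is (line by line) a gradient so that FJM applies to $U$, and where $\|W\|_{L^2(M)}$ is controlled by $\|dV\|_{L^p(M)}$, then the theorem follows by the triangle inequality together with the $1$-Lipschitz property of $\mathrm{dist}(\,\cdot\,,\sod)$.

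To construct such a $W$, I work row by row. Each row $V_i$ is a $1$-form with exterior derivative $dV_i\in L^p\Omega^2(M)$ and, since $V$ is the $L^2$-limit of smooth $V^n$ whose $dV^n$ are automatically exact, the limit $dV_i$ is exact in $L^p$. Standard regularity theory for the exterior derivative on a bounded domain with smooth boundary (via a Bogovskii/Poincar\'e-type right inverse for $d$, or equivalently via the Hodge decomposition with the appropriate boundary condition) produces a $1$-form $W_i$ on $M$ with
$$ dW_i = dV_i, \qquad \|W_i\|_{W^{1,p}(M)}\le C(M,p)\,\|dV_i\|_{L^p(M)}.$$
The hypothesis $p\ge 2d/(2+d)$ is exactly what is needed so that the Sobolev exponent $p^\ast=dp/(d-p)$ (or any finite exponent if $p\ge d$) satisfies $p^\ast\ge 2$, and hence by the Sobolev embedding (plus boundedness of $M$) $\|W_i\|_{L^2(M)}\le \Czwei(M,p)\,\|dV_i\|_{L^p(M)}$. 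Collecting the rows into a matrix-valued $W$, the $1$-form $V-W$ is closed.

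On a simply connected $M$ the closed form $V-W$ is exact, so $V-W=\nabla v$ for some $v\in W^{1,2}(M,\R^d)$, and the FJM rigidity theorem \cite[Thm.~3.1]{fjm2} produces an $R\in\sod$ with
$$\|\nabla v-R\|_{L^2(M)}\le \Ceins(M)\,\|\mathrm{dist}(\nabla v,\sod)\|_{L^2(M)}.$$
Since $\mathrm{dist}(\,\cdot\,,\sod)$ is $1$-Lipschitz on $\R^\dxd$, I get $\mathrm{dist}(\nabla v,\sod)\le \mathrm{dist}(V,\sod)+|W|$ pointwise, and therefore
$$ \|V-R\|_{L^2}\le \|\nabla v-R\|_{L^2}+\|W\|_{L^2}\le \Ceins\,\|\mathrm{dist}(V,\sod)\|_{L^2} + (\Ceins+1)\,\|W\|_{L^2},$$
which combined with the bound on $\|W\|_{L^2}$ yields the claim.

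The main technical obstacle is the first step: solving $dW=dV$ on $M$ with the right $L^p$-to-$W^{1,p}$ regularity. On $\R^d$ this is essentially a Riesz-transform computation, but on a bounded domain one has to invoke Hodge theory with boundary conditions, and if $M$ fails to be simply connected one must also absorb the finite-dimensional space of harmonic representatives so that $V-W$ becomes genuinely exact (not merely closed); the cohomological part is finite-dimensional and its $L^2$-norm is controlled by $\|V\|_{L^2}$, so this does not affect the structure of the estimate but does force the constants $\Ceins,\Czwei$ to depend on $M$. A secondary point to check is that the construction is compatible with the approximation scheme defining $dV$: one first carries out the argument for the smooth approximants $V^n$ (for which $dW^n=dV^n$ is unambiguous) and then passes to the limit, using that all the operators above are continuous in the relevant norms.
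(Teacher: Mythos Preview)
Your overall architecture matches the paper's proof almost exactly: solve $dW_i=dV_i$ row by row with the $W^{1,p}$ bound (the paper cites Schwarz' Lemma~3.2.1 for this, which needs $p>1$; the case $d=2$, $p=1$ is outsourced to \cite{msz13}), use the Sobolev embedding $W^{1,p}\hookrightarrow L^2$ for $p\ge 2d/(2+d)$ to control $\|W\|_{L^2}$, apply rigidity to the closed part $U=V-W$, and finish with the triangle inequality and the $1$-Lipschitz property of $\dist(\cdot,\sod)$. The approximation step for general $V$ is also handled the same way.

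The one genuine gap is your treatment of the non-simply-connected case. You propose to absorb the harmonic representative $h$ of $U=V-W$ into $W$, noting that $\|h\|_{L^2}\le C\|V\|_{L^2}$. But $\|V\|_{L^2}$ does not appear on the right-hand side of the target inequality, so this bound is useless: after absorption you would obtain
\[
\|V-R\|_{L^2}\le \Ceins\|\dist(V,\sod)\|_{L^2}+\Czwei\|dV\|_{L^p}+C\|h\|_{L^2},
\]
and there is no mechanism to control $\|h\|_{L^2}$ by $\|\dist(V,\sod)\|_{L^2}$ or $\|dV\|_{L^p}$ (indeed, take $V$ itself to be a nontrivial harmonic $1$-form with $dV=0$). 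The paper circumvents this entirely: it proves a separate lemma (Lemma~\ref{lem:dVgleich0}) extending the Friesecke--James--M\"uller estimate from exact to merely \emph{closed} $V\in W^{1,2}$, by covering $\overline{M}$ with finitely many contractible patches $A_k$, applying \cite[Theorem~3.1]{fjm2} on each patch (where closed implies exact), and then gluing the local rotations via a quantitative overlap estimate (Lemma~\ref{lem:U1cupU2}). This covering argument is the missing ingredient in your sketch; once you have it, no Hodge-theoretic bookkeeping of the harmonic part is needed.
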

\noindent Theorem~\ref{thm:rigidity} also holds if $M$ is a finite box with periodic boundary conditions:
\begin{corol} \label{cor:rigidity}
 Let $[M]$ be a $d$-dimensional torus with $d\ge2$. Let further $p\ge2d/(2+d)$.
 Then there exist constants $\Ceins=\Ceins([M])$ and $\Czwei=\Czwei([M],p)$ such that for all $V\in L^2([M],\R^\dxd)$ with $dV\in L^p([M])$ 
 there exists a rotation $R\in \sod$ with
 $$ \|V-R\|_{L^2([M])} \,\le\,
    \Ceins \|\dist(V,\sod)\|_{L^2([M])} +
    \Czwei \|dV\|_{L^p([M])} \,.$$
\end{corol}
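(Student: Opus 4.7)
The plan is to reduce Corollary~\ref{cor:rigidity} to Theorem~\ref{thm:rigidity} by a patching argument: cover the torus by a finite collection of open sets that embed into $\R^d$, apply the Euclidean rigidity estimate on each piece, and then show that the rotations produced on overlapping pieces must agree up to a small error.

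Concretely, I would first fix a finite family of open, connected, bounded sets $U_1,\dots,U_n \subset \R^d$ with smooth boundary such that the projection $\pi\colon\R^d\to[M]$ restricts to a diffeomorphism on each $U_i$, the images $\pi(U_i)$ cover $[M]$, the combinatorial graph whose edges connect indices $i,j$ with $|\pi(U_i)\cap\pi(U_j)|>0$ is connected, and for each such edge the set $\pi(U_i)\cap\pi(U_j)$ contains an open subset of positive, fixed-from-below measure that lifts into a single connected open piece of both $U_i$ and $U_j$. Such a family exists by a compactness argument and depends only on $[M]$.

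Second, for each $i$ the pullback $V_i:=V\circ\pi|_{U_i}$ belongs to $L^2(U_i,\R^\dxd)$ with $dV_i\in L^p(U_i)$, and Theorem~\ref{thm:rigidity} produces rotations $R_i\in\sod$ satisfying
$$ \|V-R_i\|_{L^2(\pi(U_i))} \,\le\, \Ceins(U_i)\,\|\dist(V,\sod)\|_{L^2(\pi(U_i))} + \Czwei(U_i,p)\,\|dV\|_{L^p(\pi(U_i))}. $$
Third, on an overlap $\pi(U_i)\cap\pi(U_j)$ with reference measure $\kappa:=|\pi(U_i)\cap\pi(U_j)|$, the triangle inequality gives
$$ |R_i-R_j|\,\sqrt{\kappa} \,\le\, \|V-R_i\|_{L^2(\pi(U_i)\cap\pi(U_j))} + \|V-R_j\|_{L^2(\pi(U_i)\cap\pi(U_j))}, $$
so each edge bound $|R_i-R_j|$ is controlled by the right-hand sides of the previous display. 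Walking along a spanning tree of the (connected) cover graph, every $R_i$ lies within a constant (depending only on $[M]$ and $p$) multiple of $\|\dist(V,\sod)\|_{L^2([M])}+\|dV\|_{L^p([M])}$ of $R_1$.

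Finally, set $R:=R_1$ and combine everything via
$$ \|V-R\|_{L^2([M])}^2 \,\le\, \sum_{i=1}^n \|V-R\|_{L^2(\pi(U_i))}^2
 \,\le\, 2\sum_{i=1}^n \Big( \|V-R_i\|_{L^2(\pi(U_i))}^2 + |R_i-R|^2\,|\pi(U_i)| \Big), $$
which yields the asserted inequality with constants $\Ceins([M])$ and $\Czwei([M],p)$ depending only on the cover (hence only on $[M]$ and $p$). The main obstacle I expect is bookkeeping: I must pick the cover so that the $L^p$-norms of $dV$ on the pieces sum up with only a bounded overlap factor (this is where the fixed-from-below size of overlaps matters), and I must be careful that the lifts of $V$ and of its weak exterior derivative through $\pi$ agree on overlaps — this is automatic since $\pi$ is a local isometry, but has to be stated for the approximating smooth sequence used to define $dV$.
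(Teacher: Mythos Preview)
Your argument is correct, but the paper takes a much shorter route that exploits the special structure of the torus. Instead of covering $[M]$ by charts and gluing the resulting rotations via overlap estimates, the paper simply identifies a function on $[M]$ with its periodic extension to $\R^d$, picks a ball $B$ containing a fundamental domain $M$, and applies Theorem~\ref{thm:rigidity} \emph{once} on $B$. Since $B$ is in turn contained in a union $\widetilde M$ of $n$ translated copies of $M$, periodicity gives $\|\dist(V,\sod)\|_{L^2(B)}\le\|\dist(V,\sod)\|_{L^2(\widetilde M)}=\sqrt{n}\,\|\dist(V,\sod)\|_{L^2([M])}$ and similarly for the $L^p$-norm of $dV$, while $\|V-R\|_{L^2([M])}\le\|V-R\|_{L^2(B)}$ since $M\subseteq B$. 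This yields the corollary in three lines with $\Ceins([M])=\sqrt{n}\,\Ceins(B)$ and $\Czwei([M],p)=n^{1/p}\,\Czwei(B,p)$.

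What you do differently is essentially to re-run the covering argument of Lemma~\ref{lem:U1cupU2} (which the paper uses only inside the proof of Lemma~\ref{lem:dVgleich0}) at the level of the full estimate, carrying along the $\|dV\|_{L^p}$ term through every overlap comparison. This is more general in spirit---your scheme would work on any compact flat manifold, not just a torus---but for the torus the periodic-extension trick avoids all the spanning-tree bookkeeping and the care you rightly flag about summing the local $L^p$-norms with bounded overlap.
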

\begin{rem}
 The formulation of Theorem~\ref{thm:rigidity} is not the most general one. It should also hold if $M$ is an open, connected and bounded set with a more general boundary. In the proof, we  will apply Lemma~3.2.1 in the book \cite{s95} of Schwarz. He considers manifolds with smooth boundary. Though not formally stated, his results also hold if the boundary is only piecewise smooth. In \cite{mmm8} Mitrea, Mitrea and Monniaux considered similar problems as in \cite{s95}, but for domains with Lipschitz boundary. Unfortunately they do not state the exact lemma we need. Since a smooth boundary is sufficient for our purposes, we stick to that case, where the needed lemma is explicitly stated in the literature.

 It is also possible to generalise Theorem~\ref{thm:rigidity} in another direction. If $M$ is a flat manifold, which means that all transition maps are just translations, then it makes sense to speak about global rotations. Theorem~\ref{thm:rigidity} immediately generalises to compact connected flat manifolds using a straightforward generalisation of Lemma~\ref{lem:dVgleich0} to such manifolds.       
\end{rem}

We also determine the scaling of the constants in the theorem and in the corollary above.
\begin{lem} \label{lem:scale}
 Assume that Theorem~\ref{thm:rigidity} holds on $M\subseteq\R^d$, $d\ge2$, for some $p\ge1$ with constants $\Ceins(M)$ and $\Czwei(M,p)$. Let $\e>0$. 
 Then Theorem~\ref{thm:rigidity} holds on $\e M$ for $p$ with constants
 $$ \Ceins(\e M) \,=\, \Ceins(M) \qquad\text{and}\qquad \Czwei(\e M,p) \,=\,  \e^{\frac{d}{2}-\frac{d}{p}+1} \,\Czwei(M,p) \,.$$
 The same statement is true if $M\equiv[M]$ is a torus as in Corollary~\ref{cor:rigidity}.
\end{lem}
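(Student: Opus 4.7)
The plan is to deduce the $\eta M$ version of the theorem from the $M$ version by pulling back a given $V$ along the dilation $x \mapsto \eta x$. Concretely, given $V \in L^2(\eta M,\R^\dxd)$ with $dV \in L^p(\eta M)$, I would introduce $\wti V : M \to \R^\dxd$ by $\wti V(x) := V(\eta x)$ and apply Theorem~\ref{thm:rigidity} to $\wti V$ on $M$. A small preliminary point is that $\wti V$ lies in the admissible class on $M$: if smooth $V^n$ approximate $V$ in $L^2(\eta M)$ with $(dV^n)$ Cauchy in $L^p(\eta M)$, then $\wti V^n(x) := V^n(\eta x)$ is smooth with $\wti V^n \to \wti V$ in $L^2(M)$ and, by the chain rule, $d\wti V^n(x) = \eta\, dV^n(\eta x)$ is Cauchy in $L^p(M)$.

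The heart of the argument is then dimensional analysis. A change of variables $y = \eta x$ gives
\[ \|\wti V - R\|_{L^2(M)} \,=\, \eta^{-d/2}\,\|V - R\|_{L^2(\eta M)}, \qquad
   \|\dist(\wti V,\sod)\|_{L^2(M)} \,=\, \eta^{-d/2}\,\|\dist(V,\sod)\|_{L^2(\eta M)}, \]
for any $R \in \sod$, and, incorporating the extra factor of $\eta$ produced by the chain rule on the derivative,
\[ \|d\wti V\|_{L^p(M)} \,=\, \eta^{1 - d/p}\,\|dV\|_{L^p(\eta M)}. \]

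Applying Theorem~\ref{thm:rigidity} on $M$ to $\wti V$ yields a rotation $R \in \sod$ with
\[ \|\wti V - R\|_{L^2(M)} \,\le\, \Ceins(M)\,\|\dist(\wti V,\sod)\|_{L^2(M)} + \Czwei(M,p)\,\|d\wti V\|_{L^p(M)}. \]
Multiplying through by $\eta^{d/2}$ and substituting the three scaling identities, the $\eta^{-d/2}$ factors cancel in the first two terms, while the $dV$-term acquires the prefactor $\eta^{d/2}\cdot\eta^{1-d/p} = \eta^{d/2 - d/p + 1}$. This is precisely the claimed estimate on $\eta M$ with $\Ceins(\eta M) = \Ceins(M)$ and $\Czwei(\eta M,p) = \eta^{d/2 - d/p + 1}\,\Czwei(M,p)$.

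For the torus case of Corollary~\ref{cor:rigidity}, the dilation $x \mapsto \eta x$ descends to a diffeomorphism $[M] \to [\eta M]$ that respects the periodic identifications, so the same calculation goes through verbatim. There is no real obstacle here; the only point that deserves any care is the consistency of the weak-exterior-derivative definition with rescaling, which is handled by the smooth-approximation characterisation given in the remark preceding Theorem~\ref{thm:rigidity}.
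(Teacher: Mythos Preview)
Your proof is correct and follows essentially the same approach as the paper: pull back $V$ on $\eta M$ to a function on $M$ via the dilation, compute the three scaling identities by change of variables (with the extra $\eta$ from the chain rule in the $dV$ term), apply the theorem on $M$, and multiply through by $\eta^{d/2}$. The only differences are notational (the paper swaps the roles of $V$ and $\wti V$) and that you make explicit the approximation argument ensuring the pulled-back function lies in the admissible class, which the paper leaves implicit.
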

\noindent Therefore $\Ceins$ is scale invariant, but $\Czwei$ is not (except if $p=2d/(2+d)$). These scaling properties will become relevant in Section~\ref{cha:symmetrybreaking}.
\begin{rem}
 The assumption $p\ge2d/(2+d)$ is best possible. Indeed, if we had $p<2d/(2+d)$, then $dp-2d+2p<0$, which is equivalent to $\frac{d}{2}-\frac{d}{p}+1<0$. Thus, by Lemma~\ref{lem:scale}, the constant $\Czwei(\e M,p)$ would tend to zero as $\e\to\infty$. But the latter is impossible.
 
 Indeed, consider some smooth $V:\R^d\to\R^\dxd$ such that first $V(x)\in \sod$ for all $x\in\R^d$, second $V(x)=R_0$ for all $x\in\R^d$ with $|x|\ge1$ (for some fixed $R_0\in \sod$) and third $V$ being not constant on $B_1(0)$. Then  $\|dV\|_{L^p(B_1(0))}>0$ by Liouville's Theorem. Let $M=B_1(0)$ and let $\e$ be large. Then $\inf_{R\in\sod}\|V-R\|_{L^2(\e M)}\ge c$ for some constant $c>0$ since its argmin converges to $R_0$. Moreover, $dV(x)=0$ for $|x|>1$, which implies that $\|dV\|_{L^p(\e M)}=\|dV\|_{L^p(B_1(0))}\in(0,\infty)$ is constant (for $\e>1$). Theorem~\ref{thm:rigidity} states that $0<c/\|dV\|_{L^p(B_1(0))} \le \Czwei(\e M,p)$. Therefore $\Czwei(\e M,p)\to0$ as $\e\to\infty$ is indeed impossible.
\end{rem}

\subsection{Proof of the Rigidity Estimate}

Let $A\subseteq \R^d$ such that $B\subseteq A \subseteq\overline{B}$ for an open set $B\subseteq\R^d$ (where $\overline{B}$ denotes the closure of $B$). Let further $n\in\N$, $k\in\N_0$ and $p\ge1$. Then $W^{k,p}(A,\R^n)$ denotes the Sobolev space of functions $f:A\to\R^n$ such that all partial derivatives up to order $k$ exist in the weak sense and have finite $p$-norm. In particular, $W^{0,2}(A,\R^n)=L^2(A,\R^n)$. 
 
For the proof of the rigidity estimate, we use a covering argument. Therefore we need
\begin{lem} \label{lem:U1cupU2}
 Let $A_1,A_2\subseteq\R^d$ such that $B_j\subseteq A_j \subseteq\overline{B_j}$ for an open set $B_j\subseteq\R^d$, $j\in\{1,2\}$, and $\la(A_1\cap A_2)>0$ and $\la(A_2)<\infty$, where $\la$ denotes the Lebesgue-measure. Assume that, for $j\in\{1,2\}$, there exists a constant  $c_j>0$ such that for all $V\in W^{1,2}(A_j,\R^\dxd)$ with $dV=0$ there exists a rotation $R_j\in \sod$ with 
 $$ \|V-R_j\|_{L^2(A_j)} \le c_j \|\dist(V,\sod)\|_{L^2(A_j)} \,.$$
 
 Then there exists a constant $C>0$ such that for all $V\in W^{1,2}(A_1\cup A_2,\R^\dxd)$ with $dV=0$ there exists a rotation $R\in \sod$ with 
 $$ \|V-R\|_{L^2(A_1\cup A_2)} \le C \|\dist(V,\sod)\|_{L^2(A_1\cup A_2)} \,.$$ 
\end{lem}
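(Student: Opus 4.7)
The plan is a standard patching argument. First I would apply the hypothesis to the restriction of $V$ to each piece separately---legitimate because $V|_{A_j}$ lies in $W^{1,2}(A_j,\R^\dxd)$ and still has vanishing exterior derivative---thereby producing rotations $R_1,R_2\in\sod$ with
$$\|V-R_j\|_{L^2(A_j)} \,\le\, c_j\|\dist(V,\sod)\|_{L^2(A_j)} \,\le\, c_j\|\dist(V,\sod)\|_{L^2(A_1\cup A_2)}$$
for $j\in\{1,2\}$. The rotation that will appear in the conclusion is $R:=R_1$.

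Next I would exploit the overlap to control how far apart $R_1$ and $R_2$ can be. Since $R_1-R_2$ is a \emph{constant} matrix, for any measurable $E$ of positive finite measure the ratio $\|R_1-R_2\|_{L^2(E)}^2/\la(E)$ depends only on the matrix itself; in particular
$$\|R_1-R_2\|_{L^2(A_2)} \,=\, \Big(\tfrac{\la(A_2)}{\la(A_1\cap A_2)}\Big)^{1/2} \|R_1-R_2\|_{L^2(A_1\cap A_2)}.$$
Applying the triangle inequality on $A_1\cap A_2$ and feeding in the first step yields
$$\|R_1-R_2\|_{L^2(A_1\cap A_2)} \,\le\, \|V-R_1\|_{L^2(A_1)} + \|V-R_2\|_{L^2(A_2)} \,\le\, (c_1+c_2)\,\|\dist(V,\sod)\|_{L^2(A_1\cup A_2)}.$$

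Finally, I would split $A_1\cup A_2 = A_1\cup(A_2\setminus A_1)$ and use
$$\|V-R_1\|_{L^2(A_2)} \,\le\, \|V-R_2\|_{L^2(A_2)} + \|R_1-R_2\|_{L^2(A_2)}$$
together with the two displays above to arrive at
$$\|V-R_1\|_{L^2(A_1\cup A_2)} \,\le\, C\,\|\dist(V,\sod)\|_{L^2(A_1\cup A_2)}$$
for a constant $C$ expressible explicitly in terms of $c_1$, $c_2$, $\la(A_2)$ and $\la(A_1\cap A_2)$. There is no serious obstacle here; the only point worth noting is that the argument genuinely uses both hypotheses on the overlap, namely $\la(A_1\cap A_2)>0$ (to pin $R_1$ and $R_2$ close to each other) and $\la(A_2)<\infty$ (so that transplanting a bound involving $R_2$ on $A_2$ into a bound involving $R_1$ costs only a bounded multiple of $\|R_1-R_2\|_{L^2(A_1\cap A_2)}$).
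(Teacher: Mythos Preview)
Your argument is correct and follows essentially the same patching strategy as the paper: apply the hypothesis on each piece, exploit the constancy of $R_1-R_2$ together with the positive-measure overlap to control $\|R_1-R_2\|_{L^2(A_2)}$, and then use the triangle inequality on $A_2$ to pass from $R_2$ to $R_1$. The only cosmetic difference is that the paper works throughout with squared norms and the inequality $(a+b)^2\le 2(a^2+b^2)$, arriving at the explicit constant $C=\sqrt{\big(\tfrac{4\la(A_2)}{\la(A_1\cap A_2)}+2\big)(c_1^2+c_2^2)}$, whereas you use the triangle inequality for the norms directly; both routes yield an explicit $C$ depending only on $c_1,c_2,\la(A_2),\la(A_1\cap A_2)$.
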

\begin{proof}
 We set
 $$ C\,=\,\sqrt{\Big(\frac{4\la(A_2)}{\la(A_1\cap A_2)}+2\Big)\big(c_1^2+c_2^2\big)} \,\,\,<\, \infty \,.$$
 Let $V\in W^{1,2}(A_1\cup A_2,\R^\dxd)$ with $dV=0$ and let $R_1$ and $R_2$ be rotations associated to the restriction of $V$ to $A_1$ and $A_2$, respectively. 
 In the following calculation, we first use that $R_1-R_2$ is constant. Then we apply the inequality $(a+b)^2\le2(a^2+b^2)$ and the fact that the $L^2$-norm on increasing sets increases. Finally we plug in the assumptions. This yields
 \begin{eqnarray*}
  \|R_2-R_1\|_{L^2(A_2)}^2 
  &=& \la(A_2) |R_2-R_1|^2 = \frac{\la(A_2)}{\la(A_1\cap A_2)} \|R_2-R_1\|_{L^2(A_1\cap A_2)}^2  \\
  &\le& \frac{\la(A_2)}{\la(A_1\cap A_2)} \cdot 2 \big(\|R_2-V\|_{L^2(A_1\cap A_2)}^2+\|V-R_1\|_{L^2(A_1\cap A_2)}^2\big) \\
  &\le& \frac{2\la(A_2)}{\la(A_1\cap A_2)} \big(\|R_2-V\|_{L^2(A_2)}^2+\|V-R_1\|_{L^2(A_1)}^2\big) \\
  &\le& \frac{2\la(A_2)}{\la(A_1\cap A_2)} \big(c_2^2\|\dist(V,\sod)\|_{L^2(A_2)}^2+c_1^2\|\dist(V,\sod)\|_{L^2(A_1)}^2\big) \\
  &\le& \frac{2\la(A_2)}{\la(A_1\cap A_2)} \big(c_1^2+c_2^2\big) \|\dist(V,\sod)\|_{L^2(A_1\cup A_2)}^2 \,.
 \end{eqnarray*}
 We set $R=R_1$ and estimate using again elementary inequalities, the assumptions and finally the just obtained estimate of $ \|R_2-R_1\|_{L^2(A_2)}$
 \begin{eqnarray*}
  \|V-R_1\|_{L^2(A_1\cup A_2)}^2 
  &\le& \|V-R_1\|_{L^2(A_1)}^2 + \|V-R_1\|_{L^2(A_2)}^2 \\
  &\le& \|V-R_1\|_{L^2(A_1)}^2 + 2\big(\|V-R_2\|_{L^2(A_2)}^2 +\|R_2-R_1\|_{L^2(A_2)}^2\big)\\
  &\le& c_1^2\|\dist(V,\sod)\|_{L^2(A_1)}^2 + 2c_2^2\|\dist(V,\sod)\|_{L^2(A_2)}^2 \\
  &&    \;+\, 2\|R_2-R_1\|_{L^2(A_2)}^2\\
  &\le& 2\big(c_1^2+c_2^2\big) \|\dist(V,\sod)\|_{L^2(A_1\cup A_2)}^2 + 2\|R_2-R_1\|_{L^2(A_2)}^2\\
  &\le& \Big(\frac{4\la(A_2)}{\la(A_1\cap A_2)}+2\Big) \big(c_1^2+c_2^2\big) \|\dist(V,\sod)\|_{L^2(A_1\cup A_2)}^2 \,,
 \end{eqnarray*}
 which proves the lemma. 
\end{proof}

The case $dV=0$ of Theorem~\ref{thm:rigidity} is preponed into the following lemma. It looks almost like the rigidity estimate of Friesecke et al., but it handles closed 1-forms. In contrast, \cite[Theorem~3.1]{fjm2} considers only exact 1-forms.

\begin{lem} \label{lem:dVgleich0}
 Let $d\ge2$ and $M\subset\R^d$ be open, connected and bounded with Lipschitz boundary. Then there exists a constant $C(M)$ such that for all $V\in W^{1,2}(M,\R^\dxd)$ with $dV=0$ there exists a rotation $R\in \sod$ with
 $$ \|V-R\|_{L^2(M)} \,\le\, C(M) \|\dist(V,\sod)\|_{L^2(M)} \,.$$
\end{lem}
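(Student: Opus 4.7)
The strategy is to reduce Lemma~\ref{lem:dVgleich0} to the classical Friesecke--James--M\"uller theorem \cite[Theorem~3.1]{fjm2} via a covering argument. The condition $dV=0$ says that each row of $V$ is a closed $1$-form. If $M$ were simply connected then each row would be exact, so $V=\nabla v$ for some $v\in W^{2,2}(M,\R^d)$, and \cite[Theorem~3.1]{fjm2} would apply directly. For a general connected Lipschitz domain $M$ the first cohomology may be nontrivial, so closed forms need not be exact globally; the idea is to exploit exactness on a finite simply-connected cover and then to glue using Lemma~\ref{lem:U1cupU2}.

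\textbf{Step~1 (finite good cover).} Using the Lipschitz boundary hypothesis and compactness of $\overline M$, I construct a finite open cover $U_1,\ldots,U_n$ of $\overline M$ such that each piece $\Om_i:=U_i\cap M$ is connected, simply connected, and has Lipschitz boundary. Interior points are covered by small open balls contained in $M$; boundary points are covered by the intersection of a small ball with $M$, which after the Lipschitz graph flattening of $\partial M$ is Bi-Lipschitz to a half-ball, hence simply connected with Lipschitz boundary.

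\textbf{Step~2 (local rigidity).} On each simply connected Lipschitz domain $\Om_i$ the Poincar\'e lemma for Sobolev forms provides a potential $v_i\in W^{2,2}(\Om_i,\R^d)$ with $V=\nabla v_i$ on $\Om_i$. Applying \cite[Theorem~3.1]{fjm2} to $v_i$ yields rotations $R_i\in\sod$ and constants $c_i=c_i(\Om_i)$ with
$$ \|V-R_i\|_{L^2(\Om_i)} \,\le\, c_i \|\dist(V,\sod)\|_{L^2(\Om_i)}\,. $$

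\textbf{Step~3 (iterative gluing).} Because $M$ is open and connected, hence path-connected, I reorder the $\Om_i$ so that for each $k\ge 2$ the intersection $\Om_k\cap(\Om_1\cup\cdots\cup\Om_{k-1})$ has positive Lebesgue measure. I then induct on $k$: assuming the rigidity estimate on $A:=\Om_1\cup\cdots\cup\Om_{k-1}$ with some constant $C_{k-1}$, I apply Lemma~\ref{lem:U1cupU2} with $A_1=A$ and $A_2=\Om_k$ (both bounded, positive-measure overlap) to obtain the estimate on $A\cup\Om_k$ with an updated constant. After $n-1$ gluing steps I reach $M=\Om_1\cup\cdots\cup\Om_n$ with a single rotation $R\in\sod$ and a constant $C(M)$ depending only on $M$ through the cover.

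\textbf{Main obstacle.} The technical heart is Step~1: producing a finite cover whose pieces are simultaneously simply connected (so that closed forms are locally exact) and Lipschitz (so that \cite[Theorem~3.1]{fjm2} is applicable), while also ensuring the overlaps used in Step~3 have positive Lebesgue measure. Both properties can be achieved at once via the local-graph description of $\partial M$, but it is this geometric/topological construction — rather than any sharp analytic estimate — that carries the proof; the analytic content is then supplied by the classical FJM theorem and by Lemma~\ref{lem:U1cupU2}.
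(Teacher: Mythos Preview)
Your proposal is correct and follows essentially the same approach as the paper: cover $\overline M$ by finitely many pieces on which closed $1$-forms are exact (the paper uses contractible neighbourhoods in $\overline M$, you use simply connected Lipschitz pieces), apply \cite[Theorem~3.1]{fjm2} on each piece, and then glue iteratively via Lemma~\ref{lem:U1cupU2} along a chain of overlapping pieces. The only cosmetic difference is that the paper works with subsets $A_k\subseteq\overline M$ satisfying $B_k\subseteq A_k\subseteq\overline{B_k}$ for open $B_k$, rather than insisting on open Lipschitz subdomains, but the analytic content is identical.
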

\begin{proof}
 We show this lemma by a covering argument. For $x\in \overline{M}$, let $A_x\subseteq\overline{M}$ be a contractible open neighbourhood of $x$ in $\overline{M}$. Since $\overline{M}$ is compact, there exists a finite subcover of $(A_x)_{x\in \overline{M}}$ of $\overline{M}$. Since $M$ is connected, we can arrange the subcover $A_1,\ldots,A_K$ such that $A_k\cap\bigcup_{l=1}^{k-1}A_l \ne\emptyset$ for all $k\in\{2,\ldots,K\}$. These sets have positive Lebesgue measure. Moreover, for each $k\in\{1,\ldots,K\}$, there is an open set $B_k$ such that $B_k\subseteq A_k\subseteq \overline{B_k}$. Let $C_k=C(B_k)$ be the constant in the rigidity estimate \cite[Theorem 3.1]{fjm2} of Friesecke, James and M\"uller associated to $B_k$. Note that it does not matter whether we use $A_k$ or $B_k$ in their rigidity estimate. 
 
 Let $V\in W^{1,2}(M,\R^\dxd)$ with $dV=0$. Let $k\in\{1,\ldots,K\}$. Since $A_k$ is contractible, there exist $v_k\in W^{2,2}(A_k,\R^d)$ with $V=Dv_k$ on $A_k$. Of course, the functions $v_k$ need not fit together to a global function $v$. Nevertheless, for each $k\in\{1,\ldots,K\}$, there exists a rotation $R_k\in \sod$ such that
 $$ \|Dv_k-R_k\|_{L^2(A_k)} \le C_k \|\dist(Dv_k,\sod)\|_{L^2(A_k)} \,.$$
 
 Using Lemma~\ref{lem:U1cupU2}, we show by induction on $k$, that there exist constants $\tilde{C}_k$ (independent of $V$) and rotations $\tilde{R}_k$ such that
 $$ \|V-\tilde{R}_k\|_{L^2(\bigcup_{l=1}^kA_l)} \le \tilde{C}_k \|\dist(V,\sod)\|_{L^2(\bigcup_{l=1}^kA_l)} $$
 for all $k\in\{1,\ldots,K\}$, which implies the theorem since $\bigcup_{l=1}^KA_l=\overline{M}$. 
\end{proof}

Now we are ready to prove the main rigidity estimate.

\begin{proof}[Proof of Theorem~\ref{thm:rigidity}]
 The case $d=2$ and $p=1$ is already covered by M\"uller et.\ al.\ in \cite[Theorem~3.3]{msz13}. Therefore we may assume $p>1$.
 
 First we prove the theorem for $V\in W^{1,p}(M,\R^\dxd)$. We claim that this implies $V\in L^2(M,\R^\dxd)$. Indeed, $M$ is bounded, and if $2d/(2+d)\le p\le2$ then $1\le p\le 2\le dp/(d-1p)$. Therefore Sobolev's Lemma (see \cite[Theorem 1.3.3(b)]{s95}, for instance) states that
 \begin{equation} \label{eq:sobolev}
  \|V\|_{L^2(M)} \,=\,  \|V\|_{W^{0,2}(M)} \,\le\, \Cdrei \|V\|_{W^{1,p}(M)} 
 \end{equation}
 for some constant $\Cdrei=\Cdrei(M,p)>0$.

 Let $i\in\{1,\ldots,d\}$. Considering the $i$th line $V_i$ as a 1-form, we look for $1$-forms $W_i$ which solve of the equation
 $$ dW_i = dV_i $$
 Obviously, $W_i=V_i$ is a solution. Moreover, $dV_i\in W^{0,p}\Om^2(\overline{M})$, which is the space of $2$-forms with coefficients in $W^{0,p}(\overline{M})$. According to Lemma~3.2.1 of \cite{s95} we choose a solution $W_i\in W^{1,p}\Om^1(\overline{M})$ such that
 \begin{equation}
  \label{eq:estimate}
  \|W_i\|_{W^{1,p}\Om^1(\overline{M})} \le \Cvier \|dV_i\|_{W^{0,p}\Om^2(\overline{M})}
 \end{equation}
 for some constant $\Cvier=\Cvier(M,p)>0$. Note that \cite[Lemma 3.2.1]{s95} requires $p>1$. Therefore this was assumed in the beginning of the proof. Since this lemma is stated for compact $\partial$-manifolds\footnote{A $\partial$-manifold is a complete manifold with boundary equipped with an oriented smooth atlas, see \cite[Definition 1.1.2]{s95}}, we worked on $\overline{M}$. Note that $\overline{M}$ is a compact $\partial$-manifold since $M$ is open and bounded with smooth boundary. 

 Now we define $U_i:=V_i-W_i$. Then $dU_i=dV_i-dW_i=0$. We set $W=(W_i)_{1\le i\le d}$, $U=(U_i)_{1\le i\le d}$. 
 By Lemma~\ref{lem:dVgleich0}, there exist a constant $\Ceins$, only depending on $M$, and a rotation $R\in \sod$ such that
 $$ \|U - R\|_{L^2(M)} \le \Ceins \|\dist(U,\sod)\|_{L^2(M)} \,.$$
 Using the triangle inequality twice and in between the assertion just above, we estimate
 \begin{eqnarray*}
  \|V-R\|_{L^2(M)}
  &=&   \|W+U-R\|_{L^2(M)} \\
  &\le& \|U-R\|_{L^2(M)} + \|W\|_{L^2(M)} \\
  &\le& \Ceins\|\dist(U,\sod)\|_{L^2(M)} + \|W\|_{L^2(M)} \\
  &=&   \Ceins\|\dist(V-W,\sod)\|_{L^2(M)} + \|W\|_{L^2(M)} \\
  &\le& \Ceins\|\dist(V,\sod)\|_{L^2(M)} + (\Ceins+1) \|W\|_{L^2(M)} 
 \end{eqnarray*} 
 Combining estimate \eqref{eq:sobolev} for $W$, i.e.\ Sobolev's Lemma, and estimate \eqref{eq:estimate} yields
 $$   \|W\|_{L^2(M)} \,\le\, \Cdrei \|W\|_{W^{1,p}(M)} \,\le\, \Cdrei\Cvier \|dV\|_{W^{0,p}(\overline{M})} = \Cdrei\Cvier \|dV\|_{L^p(M)} \,.$$
 By setting $\Czwei=(\Ceins+1)\Cdrei\Cvier$, we arrive at
 $$ \|V-R\|_{L^2(M)} \,\le\,
    \Ceins \|\dist(V,\sod)\|_{L^2(M)} +
    \Czwei \|dV\|_{L^p(M)} \,.$$
 which proves the theorem in the case $V\in W^{1,p}(M,\R^\dxd)$.
 
 For general $V\in L^2(M,\R^\dxd)$ with $dV\in L^p(M)$, we use a sequence $V^m\in C^\infty(M,\R^\dxd)$, $m\in\N$, which converges point-wise almost everywhere and with $\|V-V^m\|_{L^2(M)}\to0$ and $\|dV-dV^m\|_{L^p(M)}\to0$ as $m\to\infty$. Then also $\|\dist(V,\sod)-\dist(V^m,\sod)\|_{L^2(M)}\to0$ and the theorem follows. 
\end{proof}

\begin{proof}[Proof of Corollary~\ref{cor:rigidity}]
 Let $v_1,\ldots,v_d\in\R^d$ be vectors such that $ [M] = \R^d\big/\{z_1v_1+\ldots+z_dv_d\mid z_1,\ldots,z_d\in\Z\} $
 and define $M:=\{\la_1v_1+\dots+\la_dv_d\mid \la_1,\ldots,\la_d\in[0,1)\}$. We choose a ball $B\subseteq\R^d$ such that $B\supseteq M$. Moreover, let $\widetilde{M}$ be the union of $n$ translated copies of $M$ such that $\widetilde{M}\supseteq B$ (with some suitable $n\in\N$). We identify any function on $[M]$ with the function on $M$ evaluated at the corresponding representatives and extend it periodically to $\widetilde{M}$. Applying Theorem~\ref{thm:rigidity} to the ball $B$ yields
 \begin{eqnarray*}
  \|V-R\|_{L^2([M])}
  &\le& \|V-R\|_{L^2(B)} \\
  &\le& \Ceins(B) \|\dist(V,\sod)\|_{L^2(B)} +  \Czwei(B,p) \|dV\|_{L^p(B)} \\
  &\le& \Ceins(B) \|\dist(V,\sod)\|_{L^2(\widetilde{M})} +  \Czwei(B,p) \|dV\|_{L^p(\widetilde{M})} \\
  &=& \sqrt{n}\Ceins(B) \|\dist(V,\sod)\|_{L^2([M])} +  \sqrt[p]{n}\Czwei(B,p) \|dV\|_{L^p([M])} \,,
 \end{eqnarray*}
 where we used $M\subseteq B\subseteq \widetilde{M}$ and the facts that all functions are periodically extended to $\widetilde{M}$ and that $\widetilde{M}$ consists of $n$ copies of $M$. Therefore the corollary follows with $\Ceins([M])=\sqrt{n}\Ceins(B)$ and $\Czwei([M],p)=\sqrt[p]{n}\Czwei(B,p)$.
\end{proof}

Finally we proof the behaviour of the constants under scaling.
\begin{proof}[Proof of Lemma~\ref{lem:scale}]
 Let $\wti{M}:=\e M$ be the scaled domain. Let $\wti{V}\in L^2(\wti{M},\R^\dxd)$ with $d\wti{V}\in L^p(\wti{M})$. We define $V\in L^2(M,\R^\dxd)$ by $V(x):=\wti{V}(\e x)$, $x\in M$. 
 
 A change of variables yields
 $$ \int_M |V(x)-R|^2\,dx = \int_{M} |\wti{V}(\e x)-R|^2\,dx = \e^{-d}\int_{\wti{M}}|\wti{V}(y)-R|^2\,dy  $$
 and therefore
 $$ \|V-R\|_{L^2(M)} =  \e^{-\frac{d}{2}}\|\wti{V}-R\|_{L^2(\wti{M})} \,.$$
 Analogously,
 $$ \|\dist(V,\sod)\|_{L^2(M)} =  \e^{-\frac{d}{2}}\|\dist(\wti{V},\sod)\|_{L^2(\wti{M})} \,.$$
 Moreover, $dV(x)=\e\, d\wti{V}(\e x)$ and thus
 $$ \int_{M}|dV(x)|^p\,dx = \int_{M} \e^{p}\,|d\wti{V}(\e x)|^p\,dx = \e^{p-d} \int_{\wti{M}} |d\wti{V}(y)|^p\,dy\,,$$
 which implies $dV\in L^p(M)$ and
 $$ \|dV\|_{L^p(M)}  = \e^{1-\frac{d}{p}} \|d\wti{V}\|_{L^p(\wti{M})}\,.$$
 Using Theorem~\ref{thm:rigidity} on $M$, we conclude
 \begin{eqnarray*}
  \|\wti{V}-R\|_{L^2(\wti{M})}
  &=&
  \e^{\frac{d}{2}} \|V-R\|_{L^2(M)} \\
  &\le&
  \e^{\frac{d}{2}} \Ceins(M) \|\dist(V,\sod)\|_{L^2(M)} + \e^{\frac{d}{2}} \Czwei(M,p)  \|dV\|_{L^p(M)}\\
  &=&
  \Ceins(M) \|\dist(\wti{V},\sod)\|_{L^2(\wti{M})} +  \Czwei(M,p) \e^{\frac{d}{2}+1-\frac{d}{p}} \|d\wti{V}\|_{L^p(\wti{M})}\,.
 \end{eqnarray*}
 Since $\wti{V}$ was arbitrary, we can choose $\Ceins(\e M) = \Ceins(M)$ and $\Czwei(\e M,p)= \e^{\frac{d}{2}-\frac{d}{p}+1} \Czwei(M,p)$, as desired. The proof for the torus is analogous.
\end{proof}

\section{Spontaneous Rotational Symmetry Breaking} \label{cha:symmetrybreaking}

Let us start with an informal description of the crystal. The crystal is given by random points in a box $\I$, which are the centres of the molecules. Thus there is no reference lattice. We assume that the crystal has a favourite structure which should be interpreted as a property of the considered material. This structure is given by a fixed tessellation of $\R^d$. The random points $\P$ determine a set $\T$ of tiles such that each tile in $\T$ is an enlarged $\ep$-perturbation of a standard tile and such that $\T$ locally looks like the given tessellation. The perturbed tiles need not cover the whole box $\I$. The remaining ``holes'' are the defects. Almost all defects are feasible. We only require that each defect has a minimum size, i.e.\ the boundary of a defect does not come closer than $3\rho$ to itself (for some fixed $\rho\in(0,1)$). But the defects may be arbitrarily large and may also have Burgers vectors. Thus there may exist edge, screw and also mixed dislocations. We assume that the crystal is connected and sufficiently large, i.e.\ its size is comparable to the size of the box.  

The distribution of the points is given in the Gibbsian setting using a Poisson Point Process as reference measure. The Hamiltonian consists of three parts. The first part is given by some local Hamiltonians which measures the energy costs due to local deformations of the crystal. These local Hamiltonians are part of the model and shall fulfil a reasonable inequality. They can be given by a pair-potential using adjacent points, for instance (cf.\ Section~\ref{sec:concretemodels}). The second part can be interpreted as a surface energy. It punishes defects proportional to their surface. The last part of the Hamiltonian can be thought as a chemical potential; increasing it favours more points. Then we show that, in an appropriate limit, the local deformation of the crystal is close to a constant rotation.

The organisation of this chapter is as follows. In Section~\ref{sec:modeldef} we define the model in detail. After an overview we describe first the tessellation and then the crystal. Thereafter, we define the local deformation of the crystal as well as the Hamiltonian and the corresponding probability measure. Then we state the main theorem in Section~\ref{sec:thmstate}, which will be proved in Section~\ref{sec:thmproof}. The structure of the proof is explained in the beginning of that section. Finally, we give two examples of concrete models in Section~\ref{sec:concretemodels}.

\subsection{Definition of the Model} \label{sec:modeldef}
First we outline the components of our model.
\begin{compactenum}[\hspace{1.5em}1.]
 \item A periodic locally finite tessellation of $\R^d$, whose tiles are closed polytopes (maybe of different types).
 \item A parameter $\ep>0$, which measures the size of the allowed deformation of the crystal.
 \item A parameter $\rho\in(0,\rho_\text{max})$, which is a lower bound of the size of a defect.
 \item A constant $\co>0$, which is a relative lower bound on the number of the tiles of the crystal.
 \item Some local Hamiltonians, which measure the local deformation of a tile, and constants $\ci>0$, $\cii\in\R$ satisfying a certain inequality (cf.\ \eqref{eq:conHloc} below).
 \item A function $S$, which measures the surface of the defects, and a constant $\ciii>0$ satisfying a certain condition (cf.\ \eqref{eq:conS} below).
\end{compactenum}
In the following subsections, we describe the model accurately. 

\subsubsection{The Underlying Tessellation}

We choose a tessellation $\M$ of the space $\R^d$, $d\ge2$, with the following properties. Each tile $\st\in\M$ is a closed polytope. There are finitely many different types $i\in I$ of tiles. If two tiles have the same type, then their geometric shape and size as well as the types and the placement of their neighbouring tiles are identical. We allow different tile types since they naturally arise if one considers a densest sphere packing in dimension $d\ge3$, for instance. The tessellation shall be locally finite and $B_0$-periodic for a finite box $B_0$ which is the image of the cube $[0,1]^d$ under some linear map $L$. Thus the vectors $Le_j$, $j=1,\ldots,d$, span the box $B_0$ (where $e_j$ denotes the $j$th unit vector). 

Throughout we fix some $\ep>0$, $\rho\in(0,\rho_\text{max})$ and $\co>0$, where $\rho_\text{max}:=1\wedge\min\{\dist(\st,\ti{\st})\mid\st,\ti{\st}\in\M,\st\cap\ti{\st}=\emptyset\}/3$.

For each $i\in I$, we choose a fixed tile of type $i$ in $B_0$, which we denote by $\st^i$. Denoting its corners by $s_1,\ldots,s_{n_i}$, we define the set
$$ \Uept \,:=\, \big\{ \t=\hull\{x_1,\ldots,x_{n_i}\}\mid x_l\in\R^d \text{ s.th.\ }|x_l-s_l|\le\ep, 1\le l\le n_i, \,\wedge\, \la(\t)\ge\la(\st^i)\big\} $$
of all enlarged perturbed tiles. Moreover, we define the ``special Euclidean group'' $\SEept$ of $\Uept$ by
$$ \SEept \,:=\, \big\{a+R\cdot\t\mid a\in\R^d,\, R\in\sod,\, \t\in\Uept\big\} \,.$$
In the following, a ``standard'' tile (as in $\M$) is denoted by $\st$, while a perturbed tile is denoted by $\t$. Moreover, if $\T$ is a set of tiles, we define $\UT:=\{x\in\R^d\mid\exi\t\in\T:x\in\t\}$.

\subsubsection{The Crystal} \label{ssec:crystal}

Let $N\in\N$. Let the torus
$$\I := \R^d\Big/\big\{N(z_1Le_1+\ldots+z_dLe_d) \mid z_1,\ldots,z_d\in\Z\big\} $$
be the ``universe'' of the crystal, with periodic boundary conditions. Moreover, let $\ti{\Om},\F,\mu$ be a suitable probability space and for $\om\in\ti{\Om}$ let 
$$ \P = \P(\om) = \{X_1,\ldots,X_{|\P|}\} \subset \I $$
be Poissonian points, which shall model the centres of the molecules of the crystal; this means that $X_1, X_2,\ldots $ is a sequence of iid random variables which are uniformly distributed on $\I$ and independent of $|\P|$, and $\mu(|\P|=k)=e^{-\la(\I)}\la(\I)^k/k!$, $k\in\N_0$. Note that we suppress the $N$-dependency of $\ti{\Om}$ and $\P$ (and of $\Om$ and $\T$ defined later) to simplify the notation as $N$ is clear from the context.

The molecules of the crystal shall compose a perturbation of the tessellation which may have all kinds of defects.
We will define the set $\T=\T(\om)$ of perturbed tiles. The following construction is a bit complicated, but has the advantage that an upcoming condition is quite simple; the condition ensures that a point configuration is admitted. First we define a set $\hat{\T}_{\text{psbl}}$ which contains all possibly perturbed tiles whose corners are taken from the point configuration. Here we do not impose any condition on the relative locations of the perturbed tiles to each other. But we do impose such conditions in the next step, in which we define when a subset $\ti{\T}\subseteq\hat{\T}_{\text{psbl}}$ is called a locally $\M$-like set of tiles: locally, the relative locations of the tiles must be such as in $\M$. Finally we define a particular  locally $\M$-like set of tiles $\T$, which is the set containing all perturbed tiles of the crystal. It is a maximal locally $\M$-like set of tiles under the conditions that it is connected and that the tiles are not too close to each other (at the boundary of the defects).

\begin{figure}
\begin{center} \includegraphics[width=1\textwidth]{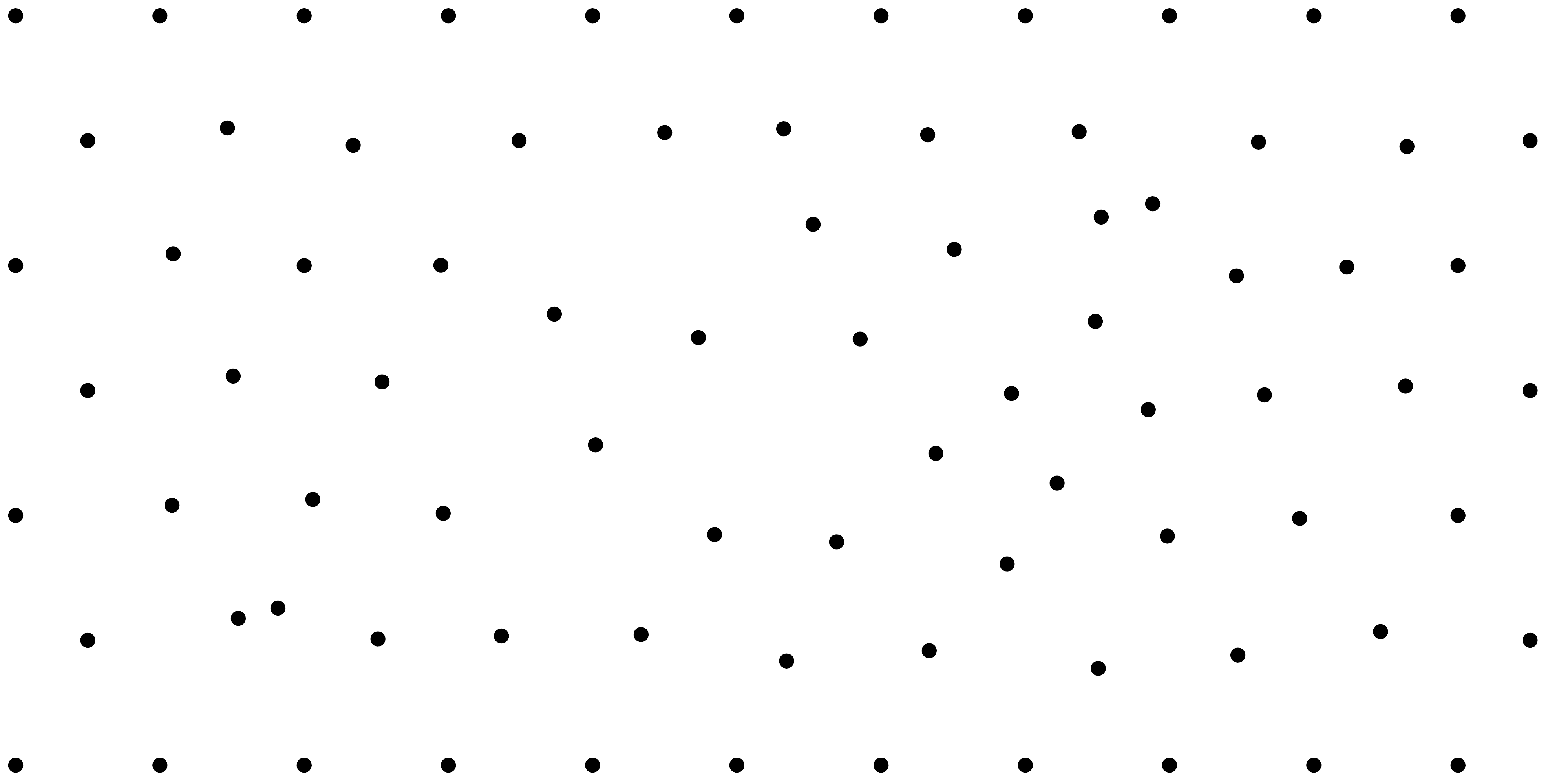} \end{center}
\caption{A random point configuration (with periodic boundary condition) \label{fig:config}}
\end{figure}
\begin{figure}
\begin{center} \includegraphics[width=1\textwidth]{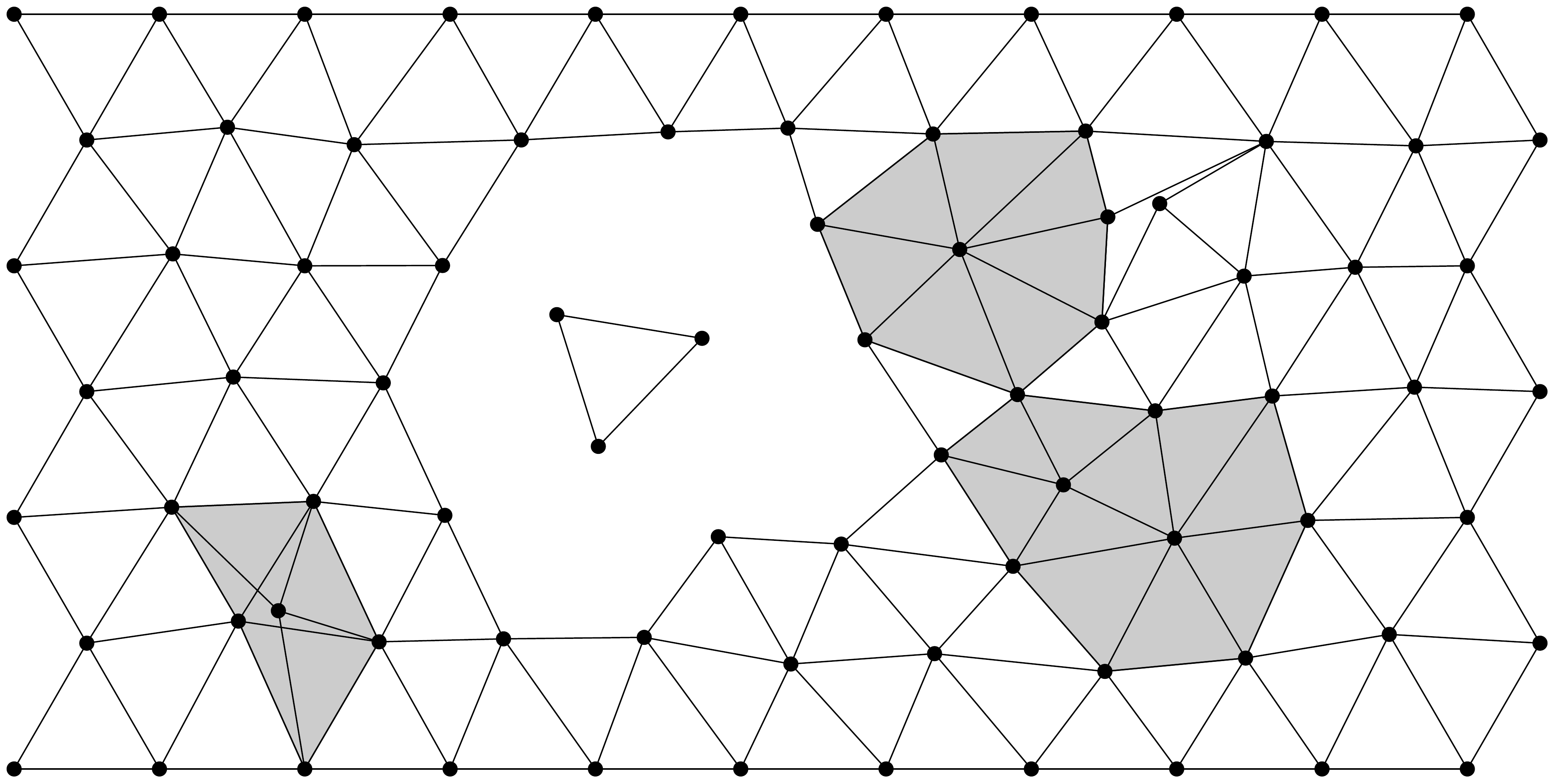} \end{center}
\caption{The set $\hat{\T}_{\text{psbl}}$ of perturbed triangles which does not look like $\M$ in the grey shaded regions \label{fig:Tdach}}
\end{figure}
\begin{figure}
\begin{center} \includegraphics[width=1\textwidth]{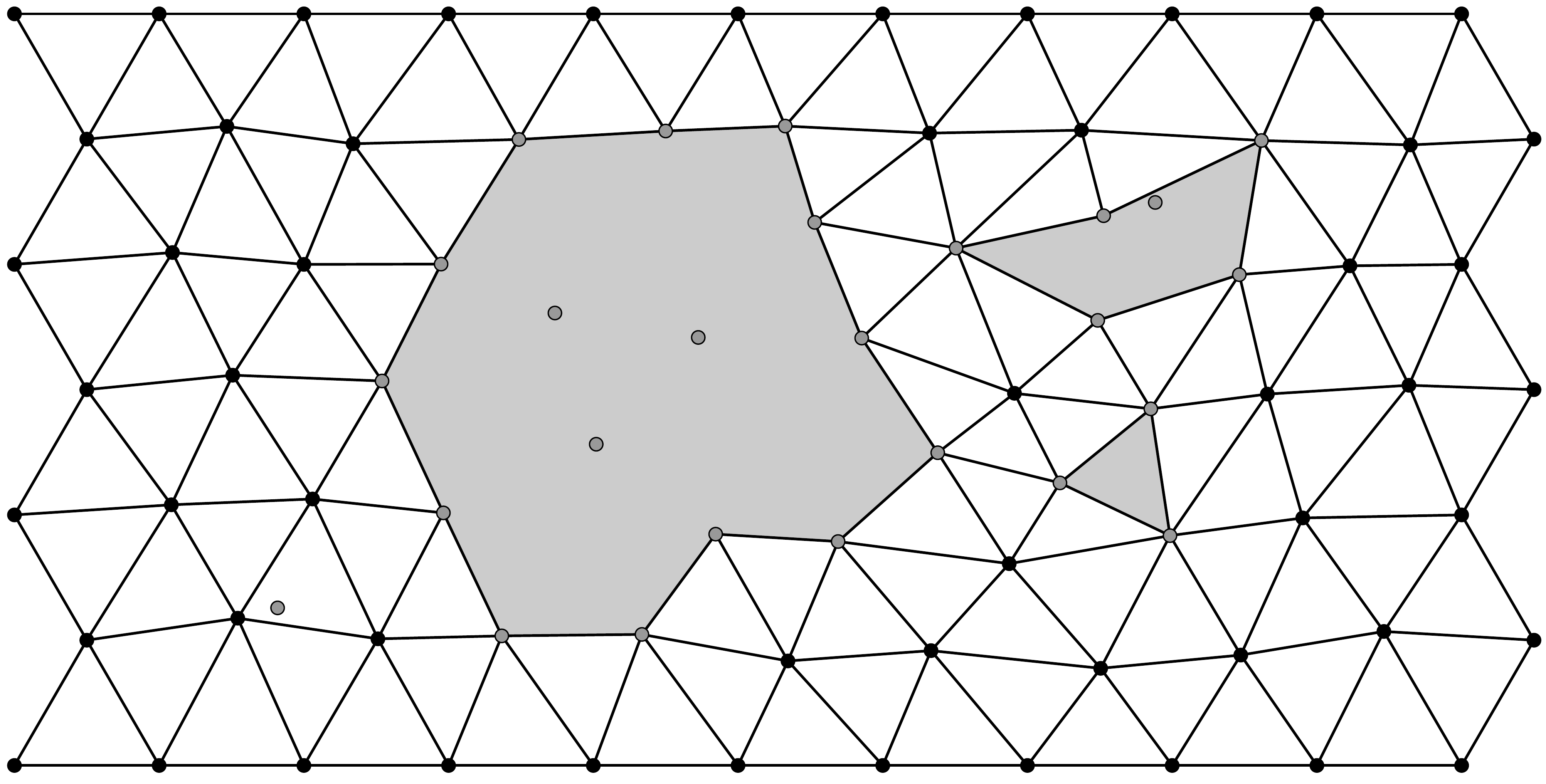} \end{center}
\caption{The crystal $\T$ with defects grey shaded and the surface points $\del\P$ in dark grey\label{fig:T}}
\end{figure} 
Before stating the precise definitions, we give an example. The underlying tessellation is just the two-dimensional triangular lattice. We start with a random point configuration (with periodic boundary condition) which is illustrated in Figure~\ref{fig:config}. Then the set $\hat{\T}_{\text{psbl}}$ of all possibly perturbed tiles contains all triangles (3 points connected by lines) in Figure~\ref{fig:Tdach}, regardless whether they are white or grey shaded. Note that there is a quadrilateral just right to the upper right grey shaded area. It is not included in $\hat{\T}_{\text{psbl}}$ as it is not a triangle. Similarly, the big 13-gon is not included, but the triangle inside is. In the grey shaded regions, $\hat{\T}_{\text{psbl}}$ does not look like the triangular lattice since the triangles do overlap or there is an interior vertex with five or seven adjacent triangles. Therefore, we have to omit some triangles in the grey shaded regions in order to get a locally $\M$-like set of tiles. Finally, the crystal $\T$ is drawn in Figure~\ref{fig:T}. It contains all white triangles. The grey shaded regions (including the grey triangle) are the defects of the crystal.  The triangle formed by the three points inside the huge defect is not included since the crystal must be connected. Furthermore, $\hat{\T}_{\text{psbl}}$ contains two triangles using the point inside the upper right defect. But they are not included in $\T$ since otherwise some triangles would be too close to each other. 

Now we state the precise definitions. Let
$$
 \hat{\T}_{\text{psbl}} \,:=\, \big\{ \t=\hull\{X_{j_1},\ldots,X_{j_k}\} \mid \{j_1,\ldots,j_k\} \subset \{1,\ldots,|\P|\}, \exi i\in I: \t\in\SEept \big\}
$$
be the set of all possibly perturbed tiles. Any subset $\ti{\T}\subset\hat{\T}_{\text{psbl}}$ is called a locally $\M$-like set of tiles, if for $j=1,\ldots,j_{\ti{\T}}$ (with some $j_{\ti{\T}}\in\N$), there are are sets $\ti{\T}_j\subset\hat{\T}_{\text{psbl}}$, $\M_j\subset\M$ and continuous bijective maps 
$$v_j:\Un{\ti{\T}_j}\to\Un{\M_j}$$
mapping each tile $\t\in \SEept$ to a tile $a+\st^i\in\M_j$ (with some $\ti{a}\in\R^d$) such that
$$ \ti{\T} = \bigcup_{j=1}^{j_{\ti{\T}}} \ti{\T}_j $$
and the sets $\ti{\T}_j$ do overlap, i.e.\ all intersections $\Un{\ti{\T}_j}\cap\Un{\ti{\T}_{j'}}$ consist only of whole tiles if they are not empty. 
Thus $\ti{\T}$ is a locally $\M$-like set of tiles if it looks locally like $\M$. Now we define $\T=\T(\om)$ to be a largest subset of $\hat{\T}_{\text{psbl}}$ such that
\begin{compactenum}[\hspace{1.5em}(i)]
 \item $\T$ is a locally $\M$-like set of tiles,
 \item $\UT$ is connected,
 \item if $\t\cap\ti{\t}=\emptyset$ then even $\dist(\t,\ti{\t})>3\rho$ holds for all $\t,\ti{\t}\in\T$ and
 \item \label{en:slit} for all $\t\in\T$, all faces $F$ of $\t$ and for all $\ti{\t}\in\T$ with $F\nsubseteq\ti{\t}$ there exists a point $x\in F$ such that $\dist(x,\ti{\t})>3\rho$. 
\end{compactenum}
Here ``a largest subset'' is understood as a subset whose cardinality (number of tiles) is maximal under all subsets with these properties. In fact, there need not exist a unique largest subset. In that case, we choose one of them according to some fixed rule.

A tile $\t\in\T$ inherits its type from the corresponding tile in $\M$ using the bijections introduced above. We denote it by $\typt$.

Furthermore, we define the set of surface points of $\P$ as follows:
\begin{equation} \label{eq:surfacepoints}
 \del\P \,:=\, \big\{ x\in\P \mid x\in\del\UT\text{ or }x\notin\V(\T)\big\} \,,
\end{equation}
where $\del\UT$ denotes the topological boundary of the set $\UT$ and $\V(\T)$ is the set of points of $\P$, which are vertices of any tile $\t\in\T$. In the example above, the surface points are drawn in grey in Figure~\ref{fig:T}. Note that there are surface points which are not vertices of any tile. We will call such surface points also exterior points (though they can also lie inside the crystal, as one of them does in the example). Such points are possible, but will be unlikely.

We need only one condition on the set $\P$. We namely require that the crystal has a minimum size. Thereto we define the space of admitted configuration to be 
$$ \Om:=\{\om\in\ti{\Om}\mid |\T|\ge \co N^d\} \,.$$
Then $\Om\ne\emptyset$ for large enough $N$ (even for all $N\in\N$ if $\co\le1$) as restricting $\M$ to $\I$ yields an allowed point configuration. Thereto we had to choose $\rho<\rho_\text{max}\le\min\{\dist(\st,\ti{\st})\mid\st,\ti{\st}\in\M,\st\cap\ti{\st}=\emptyset\}/3$. Otherwise even the points of $\M$ would not compose a huge crystal. 

Note that we do not require a minimal distance between two points and that there may exist points inside a tile which do not belong to the tile. But all such points are included in the surface points $\del\P$, which consists not only of the surface vertices of $\T$, but also of the points not belonging to any tile.

\subsubsection{The Local Deformation of the Crystal}

Now we define a random function $V=V(\om)\in L^2(\UT,\R^\dxd)$ which measures the local deformation (rotation and scaling) of the crystal. Thereto, for $i\in I$, we partition the tile $\st^i$ into simplices $\st^{i,1},\ldots,\st^{i,J_i}$. For any $\t\in \SEept$ we define the bijective map
\begin{equation} \label{eq:defvt}
 v_{\scriptscriptstyle\t} : \t \to \st^i  
\end{equation}
such that its restriction to $v_{\scriptscriptstyle\t}^{-1}[\st^{i,j}]$ is affine linear for each $j\in\{1,\ldots,J_i\}$. Using these maps, we define
$$ V : \UT \to \R^\dxd,\quad x\mapsto \nabla v_{\scriptscriptstyle\t} (x)\quad\text{if }x\in\t\,.$$
Note that the Jacobi matrix $\nabla v_{\scriptscriptstyle\t}$ is not well-defined on the boundary of the pre-image of a simplex; but since these boundaries have zero Lebesgue measure, this is irrelevant. Then $V$ is a piecewise constant function on $\UT$. Though it is locally defined as a derivative, it is, in general,  globally not a derivative, since there may be defects with Burgers vectors.

\subsubsection{The Hamiltonian}

We assume that some local Hamiltonians
$$ \hloc^i: \Uept \to \R\,,\quad i\in I\,, $$
are given which are continuous and fulfil
\begin{align} 
 &\exi \ci>0 \exi \cii\in\R\, \fa i\in I \fa \t\in\Uept: \notag\\
 &\qquad \hloc^i(\t)-\hloc^i(\st^i)\,\ge\, \ci \|\dist(\nabla v_{\scriptscriptstyle\t},\sod)\|^2_{L^2(\t)} + \cii\big(\la(\t)-\la(\st^i)\big)\,.\label{eq:conHloc}
\end{align}
A tile $\t\in\T$ satisfies $\t\in\SEept[\typts]$. Therefore $\t=a+R\cdot\ti{\t}$ for some $a\in\R^d$, $R\in\sod$ and $\ti{\t}\in\Uept[\typts]$ . If several choices of $a$, $R$ and $\ti{\t}$ are possible, we choose one of them according to some fixed rule. We extend $\hloc^i$, $i\in I$, to $\T$ by setting $\hloc^\typts(\t):=\hloc^\typts(\ti{\t})$.

Let further a quantity $S:\Om\to\R$ be given which measures the number of surface points of the crystal in the following sense:
\begin{equation} \label{eq:conS}
 \exi \ciii>0\fa N\in\N \fa \om\in\Om:\quad \ciii |\del\P| \lle S \quad\text{and}\quad \del\P=\emptyset\,\Rightarrow\,S=0 \,.
\end{equation}

Now we define the Hamiltonian
\begin{equation} \label{eq:defH}
 H_\sn(\om)\,:=\, \sum_{\t\in\T} \hloc^{\typts}(\t) + \s S - m |\P| 
\end{equation}
for $\s>0$, $m\in\R$ and $N\in\N$. The first addend measures the local energy of the crystal caused by the perturbation of $\M$. The term $\s S$ represents the surface energy. Finally, $m$ can be interpreted as a chemical potential. Using this Hamiltonian we define for $\be>0$, $\s>0$, $m\in\R$ and $N\in\N$ the partition sum
\begin{equation} \label{eq:defZ}
 Z_\bsn \,:=\, \int_\Om e^{-\be H_\sn} \,d\mu
\end{equation}
and the probability measure $P_\bsn$ via
\begin{equation} \label{eq:defP}
 \frac{dP_\bsn}{d\mu} \,:=\, \frac{1}{Z_\bsn} e^{-\be H_\sn} \,.
\end{equation}
Let $E_\bsn$ denote the expectation with respect to $P_\bsn$.

Note that $P_\bsn$ is well-defined as $Z_\bsn\in(0,\infty)$, at least for large enough $N$. Indeed, the lower bound on $H_\sn$ provided by Lemma~\ref{lem:dist-estimate} below implies $Z_\bsn<\infty$ (cf.\ the remark after that lemma). Furthermore, Lemma~\ref{lem:partitionsum} below implies $Z_\bsn>0$ for large enough $N$.

\subsection{The Main Result} \label{sec:thmstate}

Now we are ready to state the main result.
\begin{thm} \label{thm:symmetrybreaking}
 There exist $m_0\in\R$ and constants $\cxxvii,\cxxviii>0$ and $\cxxix\in\R$ depending only on the model, but not on $m$, $\s$, $\be$ or $N$, such that the rotational symmetry of the crystal is broken in the following sense: 
 $$\forall\, m\ge m_0:\quad\adjustlimits\lim_{\be\to\infty} \limsup_{N\to\infty} \sup_{\s\ge\s_0(N,m)} 
  E_\bsn\bigg[ \inf_{R\in\sod} \frac{1}{|\T|}\sum_{\t\in\T} \|V-R\|^2_{L^2(\t)}\bigg] \;=\; 0 $$
 where $\s_0(N,m):=\cxxvii N^2 + \cxxviii m + \cxxix $.
\end{thm}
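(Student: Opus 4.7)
The strategy, inspired by Heydenreich, Merkl and Rolles, is to combine the torus version of the rigidity estimate (Corollary~\ref{cor:rigidity}) with the structural bounds \eqref{eq:conHloc} and \eqref{eq:conS}, together with a comparison of the partition sum against a reference configuration. I restrict attention to $\om\in\Om$, so that $|\T|\ge\co N^d$. The first step is to extend the piecewise-affine deformation $V\in L^2(\UT,\R^\dxd)$ to $\tilde V\in L^2([\I],\R^\dxd)$ by setting $\tilde V\equiv\mathrm{Id}$ on the defect region $[\I]\setminus\UT$. Inside each tile $\t$, $V_i$ is the weak gradient of the continuous piecewise-affine map $(v_{\scriptscriptstyle\t})_i$, so $dV_i=0$ there in the distributional sense; hence $d\tilde V$ is concentrated on the tile faces and on $\del\UT$. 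After mollification on a scale smaller than $\rho$ (permitted by conditions~(iii) and~(iv) of Section~\ref{ssec:crystal}), $d\tilde V$ becomes an honest $L^p$-function with norm bounded by $C\,|\del\P|^{1/p}$. The exponent $p=2d/(2+d)$ is critical: by Lemma~\ref{lem:scale} it is the unique value for which $\Czwei([\I],p)$ is scale-invariant, so that both constants in Corollary~\ref{cor:rigidity} are independent of $N$. Applying the corollary then yields some $R\in\sod$ with
\begin{equation*}
\|V-R\|_{L^2(\UT)}\;\le\;\Ceins\|\dist(V,\sod)\|_{L^2(\UT)}+\Czwei\|d\tilde V\|_{L^p([\I])}.
\end{equation*}

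Next I translate each right-hand side term into a Hamiltonian bound. Summing \eqref{eq:conHloc} over $\t\in\T$ gives
\begin{equation*}
\ci\|\dist(V,\sod)\|^2_{L^2(\UT)}\;\le\;\sum_{\t\in\T}\hloc^{\typ{\t}}(\t)-\sum_{\t\in\T}\hloc^{\typ{\t}}(\st^{\typ{\t}})-\cii\sum_{\t\in\T}\bigl(\la(\t)-\la(\st^{\typ{\t}})\bigr),
\end{equation*}
whose last term is linear in $N^d$ and $|\P|$ and can be absorbed into the chemical-potential part of $H_\sn$ by adjusting $m$. For the exterior-derivative term I use $\|d\tilde V\|^2_{L^p([\I])}\le C|\del\P|^{2/p}\le CN^2|\del\P|$, where the second inequality exploits the identity $d(2/p-1)=2$ at the critical exponent combined with $|\del\P|\le|\P|\lesssim N^d$; by \eqref{eq:conS} this is at most $(C/\ciii)N^2 S$, providing the link to the surface-energy term $\s S$ of $H_\sn$.

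To control the expectation I compare the Hamiltonian with that of the reference configuration $\P_0$ consisting of the vertices of $\M$ in $\I$, for which $\T_0=\M|_\I$, $V_0\equiv\mathrm{Id}$, $S(\P_0)=0$ and $H_\sn(\P_0)=\sum_{\st\in\M|_\I}\hloc^{\typ{\st}}(\st)-m|\P_0|$. Inserting $\P_0$ into $Z_\bsn$ (together with the local-perturbation lower bound for $Z_\bsn$ alluded to after \eqref{eq:defP}) gives $-\log Z_\bsn\le\beta H_\sn(\P_0)+O(N^d)$; standard free-energy manipulations then yield $E_\bsn[H_\sn-H_\sn(\P_0)]\le O(N^d/\beta)$. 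Dividing the rigidity bound through by $|\T|\ge\co N^d$, squaring, taking $E_\bsn$, and choosing the threshold $\s_0(N,m)=\cxxvii N^2+\cxxviii m+\cxxix$ with $\cxxvii$ large enough that $\Czwei^2N^2|\del\P|/(\co N^d\ciii)$ is dominated by $\s S/(\co N^d)$, and $\cxxviii$ large enough to offset the chemical-potential contribution, produces an upper bound of order $O(1/\beta)$ which vanishes in the iterated limit $\lim_{\be\to\infty}\limsup_{N\to\infty}\sup_{\s\ge\s_0}$.

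The principal obstacle is the first step: constructing $\tilde V$ and rigorously verifying that the distributional $d\tilde V$ is approximable by smooth $L^p$-functions whose norms grow no faster than $|\del\P|^{1/p}$, uniformly over the topological class of defects admitted by the model (including those with nontrivial Burgers vectors or screw components). This is precisely the obstruction that blocks a direct application of the Friesecke--James--M\"uller theorem and for which the generalised Corollary~\ref{cor:rigidity} was tailor-made. A secondary challenge is the constant-tracking showing that $\s_0$ need grow only like $N^2+m$: the $N^2$ arises from $|\del\P|^{2/p}\le N^2|\del\P|$ at the critical exponent and is, as the introduction notes, unavoidable with this method, which is why the theorem fails to be uniform in the box size $N$.
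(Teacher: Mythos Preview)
Your overall strategy matches the paper's: extend $V$ to the torus, apply Corollary~\ref{cor:rigidity}, absorb $\|\dist(V,\sod)\|^2$ into $\sum\hloc$ via \eqref{eq:conHloc} and the $dV$-term into $\s S$ via \eqref{eq:conS}, then control the expected normalised energy difference through a partition-sum lower bound. Two of your variations are genuine and valid. First, your use of the critical exponent $p=2d/(2+d)$ is equivalent to the paper's choice $p=2$: the paper works this out in Lemma~\ref{lem:s0-estimate} and remarks that ``the choice of $p$ does not matter'', both routes producing the same factor $N^2$. Second, your relative-entropy argument for the expectation bound is a real simplification. The paper proves Lemma~\ref{lem:EH-H} via a spanning-tree decomposition and explicit Gaussian integrals (Lemma~\ref{lem:spanningtree}), whereas $D(P_\bsn\|\mu)\ge0$ together with Lemma~\ref{lem:partitionsum} gives directly $E_\bsn[H_\sn-H_\sn(\sta)]\le N^d(\g+\cxxii(\g,m)/\be)$ for every $\g>0$; dividing by $|\T|\ge\co N^d$ and using $H_\sn-H_\sn(\sta)\ge0$ from Lemma~\ref{lem:H-estimate} yields \eqref{eq:lim1} after $\be\to\infty$ then $\g\to0$. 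Note, though, that this is not $O(N^d/\be)$ as you write: the $\g N^d$ piece carries no $1/\be$, and $\cxxii(\g,m)\to\infty$ as $\g\to0$, so the two-parameter limit is essential.

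There is however a genuine gap in your surface bookkeeping. You assert $|\del\P|^{2/p}\le N^2|\del\P|$ by invoking $|\del\P|\le|\P|\lesssim N^d$. This fails: $\del\P$ contains the exterior points $\Pe$ (see \eqref{eq:surfacepoints}), and neither $|\Pe|$ nor $|\P|$ admits any deterministic bound in terms of $N^d$, since the Poisson cloud can drop arbitrarily many points that are not vertices of any tile. The paper's fix, in Lemma~\ref{lem:s0-estimate}, is to route through $|\del\T|$ instead: $dV$ is supported in a $\rho$-tube of $\del\UT$, so $\|dV\|_{L^p}^p\lesssim\la(\del^{\overline{0\rho}}\UT)\lesssim|\del\T|$ by Lemma~\ref{lem:estBoundary}(\ref{asc}); then $|\del\T|\le|\T|\le\cxiv N^d$ holds deterministically and gives $|\del\T|^{2/p-1}\lesssim N^2$; only afterwards does one pass to $|\del\T|\lesssim|\del\P|\le S/\ciii$ via Lemma~\ref{lem:estBoundary}. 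Your hard cut-off plus mollification can in principle be made to produce these bounds, but the paper's tube interpolation (Lemmas~\ref{lem:exist_g} and~\ref{lem:interpolate}) is exactly what makes the pointwise control of $|dV|$ and of $\dist(V,\sod)$ on the extension region transparent, and is the resolution of the ``principal obstacle'' you correctly flag.
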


The main constraint of this theorem is that the estimate is not uniform in the size of the box since $\s_0$ depends on $N$. Thus it does not carry over to infinite-volume limits. The reason for that $N$-dependency lies in the scaling behaviour of the constants in Theorem~\ref{thm:rigidity} as stated in Lemma~\ref{lem:scale}. It is not possible to get better results using the chosen method.

Another constraint is that we assumed or rather conditioned on the event that the size of the crystal is comparable to the box size, i.e.\ $|\T|\ge\co N^d$. Whether this event has large probability is a different topic and not discussed in this article. But one might expect that its probability is large if the chemical potential $m$ is large enough. Then more points are more likely and they should form more tiles, since otherwise they are surface points which are punished with $\s\ge m$.

Let us further remark, that the crystal consists only of enlarged perturbed tiles, i.e.\ the Lebesgue measure of any perturbed tile must not be smaller than the Lebesgue measure of the corresponding standard tile. Therefore, it is not possible to cover the whole box with more tiles than the standard tessellation would need. This may be considered as a hard-core condition. Furthermore, the whole perturbed tile must be $\ep$-close to a standard tile. For instance, the postulate that only the edge lengths are close to the corresponding standard edge lengths might not be enough. 

Moreover, we assume in the definition of $\T$ that each defect has a minimum size: non-adjacent tiles must have distance larger than $3\rho$. This condition is crucial to extend $V$ into the defects. 

We also assume by definition that the crystal is connected.  This assumption is necessary. If the crystal consists of two components, for example, there is no reason why one could use the same rotation $R$ for both components. Indeed, the second component could be a rotated copy of the first one.

Finally, we equipped the box $\I$ with periodic boundary conditions. This has in particular the advantage that configurations without defects have no boundary, which is a technical relaxation, especially in Lemma~\ref{lem:partitionsum}. Otherwise, the periodic boundary is not essentially used.

Despite these constraints, especially the non-uniformity in $N$, Theorem~\ref{thm:symmetrybreaking} has the feature that it handles almost all kinds of defects, including unbounded and dislocation defects. Up to the author's knowledge, it is the first result on spontaneous symmetry breaking allowing such general defects.

\subsection{Proof of the Main Result} \label{sec:thmproof}

Before we start the proof, we give an overview. Generally, we prove Theorem~\ref{thm:symmetrybreaking} using more or less the same approach as Heydenreich, Merkl and Rolles used in \cite{hmr13}. But the implementation of that approach is different.

One main difference is that we work directly on the level of the derivatives: Indeed $V$ is matrix-valued and locally the derivative of a function $v_{\scriptscriptstyle\t}$. But globally, $V$ need not be any derivative. Moreover, $v_{\scriptscriptstyle\t}$ is the inverse of the corresponding function in \cite{hmr13}. This is due to the fact that there is no reference lattice.

First we extend the function $V$ into the defects in Subsection~\ref{sec:extensionV}. Thereto we use a tube-neighbourhood of $\UT$. This extension is different to the extension in \cite{hmr13} since we consider different kinds of defects. In Subsection~\ref{sec:cardinality} we define the standard configuration and estimate the cardinality of some subsets of $\P$ and $\T$; this section has no counterpart in \cite{hmr13}. Afterwards, in Subsection~\ref{sec:estimateH}, we prove an estimate for the Hamiltonian, which is an analogue to \cite[Lemma~3.2]{hmr13}. Though its proof is different, it uses the same general idea, namely to apply a rigidity estimate. In Subsection~\ref{sec:partitionsum} a lower bound for the partition sum is proven, which is used in Subsection~\ref{sec:internalenergy} to receive an upper bound for the internal energy. The proofs of these results, which are analogues to \cite[Lemma~3.1]{hmr13} and \cite[Lemma~3.3]{hmr13}, respectively, use ideas from their proofs. Finally, in Subsection~\ref{sec:results}, we prove a corollary which states the main result in different forms and also implies Theorem~\ref{thm:symmetrybreaking}.

In the following we need quite a lot different constants. Unless explicitly stated, they are all uniform constants. Almost all of them depend on the model, i.e.\ on the tessellation, the local Hamiltonians, the surface measure $S$ or on the constants $\ep,\rho,\co,\ci,\cii,\ciii$. But they are independent of $m$, $\s$, $\be$, $N$ and $\om$.

The constants in the lemmas and in the proofs are numbered separately. The constants in the lemmas are needed globally. Though we need the constants in the proofs only locally, they are numbered in ascending order to avoid confusion. Most of the constants are positive, but some can be any real number. In that case the constant has a little ${}^{\scriptscriptstyle\R}$ as superscript.

\subsubsection{Extension into the Defects} \label{sec:extensionV}
First we want to extend the random function $V=V(\om)\in L^2(\UT,\R^\dxd)$, which measures the local deformation of the crystal, into the defects. We receive a random function also denoted by $V=V(\om)$ with $V\in L^2(\I,\R^\dxd)$ and $dV\in L^p(\I)$, $p\ge1$. For a set $A\subseteq\I$, let $A^c:=\I\setminus A$ denote the complement of $A$ in $\I$.

We define a $\rho$-tube-neigh\-bour\-hood $\del^{\overline{0\rho}}\UT$ of $\UT$ using a homeomorphism
$$ g=(g_\del,g_t):\, \del^{\overline{0\rho}}\UT \,\to\, \del\UT\times[0,1]$$
such that $\del^{\overline{0\rho}}\UT \subseteq (\interior\UT)^c$, $g(x)=(x,0)$ for all $x\in\del\UT$ and such that $dg_t$ exists and is uniformly bounded. Though not formally required, one can imagine $\del^{\overline{0\rho}}\UT$ as the set of points whose distance from $\UT$ is at most $\rho$. Then $g$ is some parametrisation of this set. This is also the reason for the notation. The proof of the existence of such a homeomorphism is given in Lemma~\ref{lem:exist_g} below. The main ingredient is a vector field $w$ defined on $\del\UT$, which exists since the distance of two disjoint tiles is greater than $3\rho$ by the definition of $\T$.

This construction is schematically drawn in Figure~\ref{fig:extdefects}.
\begin{figure}
 \begin{center}
  \begin{overpic}[width=0.85\textwidth]{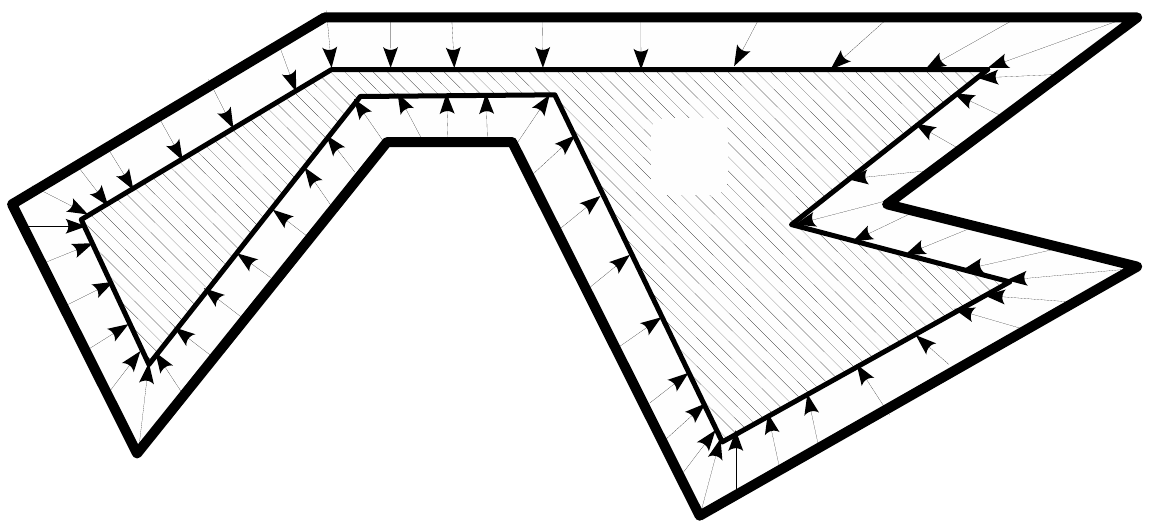} 
   \put(58.3,30.5){\Large $\wti{R}$}
   \put(36,18){\Large $V$}
  \end{overpic}
 \end{center}
 \caption{A defect (with arrows or hatched) of the crystal (white, outside), the $\rho$-tube-neigh\-bour\-hood (with arrows) and the vector field $w$ (the arrows) \label{fig:extdefects}}
\end{figure}
The crystal is the white area outside and the defect consists of the hatched area and of the area with arrows. The latter one is the $\rho$-tube-neigh\-bour\-hood $\del^{\overline{0\rho}}\UT$. We will extend the function $V$, which is already defined in the white area, into the defects by setting it constant inside the hatched area and by interpolating inside the area with arrows.

In order to extend $V$, we choose a rotation $\wti{R}=\wti{R}(\om)\in\sod$ uniformly at random, independently of $\P$. We could also use a fixed rotation; but if it is chosen uniformly at random, the random variable $V$ is rotational invariant. Moreover, let $\ti{V}^n:\UT\to\R^\dxd$, $n\in\N$, be smooth functions which converge to $V$ on $\UT$. First we extend $\ti{V}^n$ to $V^n$ as follows:
$$ V^n(x) \,:=\, \begin{cases}
                  \ti{V}^n(x)					&\text{if }x\in\UT \\
                  (1-g_t(x))\ti{V}^n(g_\del(x)) + g_t(x) \wti{R}	&\text{if }x\in\del^{\overline{0\rho}}\UT \\
                  \wti{R}	&\text{if }x\in(\UT)^c\cap(\del^{\overline{0\rho}}\UT)^c  \,.
                 \end{cases}$$
Finally, we define $V$ as the $L^2$-limit of $V^n$.
This limit exists and is independent of the choice of the sequence $\ti{V}^n$. Moreover, Lemma~\ref{lem:interpolate} below implies that $dV\in L^p(\I)$, $p\ge1$.

Now we prove the existence of the homeomorphism $g$.

\begin{lem} \label{lem:exist_g}
 There exists a constant $\cxxvi>0$ such that for all $N\in\N$ and $\om\in\Om$, there exists a Lipschitz-continuous homeomorphism
 $$ g=(g_\del,g_t):\, \del^{\overline{0\rho}}\UT \,\to\, \del\UT\times[0,1]$$
 with Lipschitz-continuous inverse such that first $\del^{\overline{0\rho}}\UT \subseteq (\interior\UT)^c $, second $g(x)=(x,0)$ for all $x\in\del\UT$ and finally $dg_t$ exists with $ |dg_t| \le \cxxvi$.
\end{lem}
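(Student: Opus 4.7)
The plan is to construct $g$ as the inverse of a tubular-neighbourhood flow built from an outward-pointing Lipschitz vector field on $\del\UT$, with all quantitative bounds obtained uniformly from the $3\rho$-separation hypothesis and the finiteness of tile types.

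First I would construct a Lipschitz-continuous vector field $w:\del\UT\to\R^d$ that points into the exterior of the crystal, with $|w|\le 1$ and with a uniform positive lower bound on its outward-normal component. On the relative interior of each $(d-1)$-face $F\subseteq\del\UT$ of a tile $\t\in\T$, take $w$ to be the outward unit normal of $F$ with respect to $\t$. On the lower-dimensional strata of $\del\UT$ (edges, vertices, and higher codimensional subfaces where several boundary faces meet), define $w$ as a suitable convex combination of the outward normals of the incident faces and interpolate so that $w$ becomes Lipschitz across these strata. Since $\M$ has only finitely many tile types and every $\t\in\T$ lies in some $\SEept[\typts]$, i.e.\ is an $\ep$-perturbation of a standard tile, there are only finitely many combinatorial corner types; hence the Lipschitz constant of $w$ and the lower bound on its outward component can be chosen uniformly in $N$ and $\om$.

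Second, I would define the flow map $\Phi:\del\UT\times[0,1]\to\I$ by $\Phi(x,t):=x+t\rho\,w(x)$ and set $\del^{\overline{0\rho}}\UT:=\image(\Phi)$. Since each tile is a closed convex polytope and $w$ points outward, $\Phi(x,t)\in(\interior\UT)^c$ for all $(x,t)$. For injectivity I would distinguish three cases. If $x_1\in\del\t_1$ and $x_2\in\del\t_2$ with disjoint $\t_1\cap\t_2=\emptyset$, then condition~(iii) in the definition of $\T$ gives $\dist(\t_1,\t_2)>3\rho$ while $|\Phi(x_i,t_i)-x_i|\le\rho$, so $\Phi(x_1,t_1)\ne\Phi(x_2,t_2)$. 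If $x_1,x_2$ both lie on the boundary of the same tile $\t$, local injectivity on a tube of depth $\rho<\rho_{\text{max}}$ follows from convexity of $\t$ together with the uniform outward lower bound on $w$. If $\t_1\ne\t_2$ but $\t_1\cap\t_2\ne\emptyset$, condition~(iv) (the slit condition) guarantees that the boundary faces of $\t_1$ and $\t_2$ contributing to $\del\UT$ cannot be $3\rho$-close along their full extent, and the convex-combination rule used to define $w$ on the shared corner stratum keeps the two local flows apart.

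Third, I would set $g:=\Phi^{-1}$ with coordinates $g=(g_\del,g_t)$. By construction $g(x)=(x,0)$ for $x\in\del\UT$ and $\del^{\overline{0\rho}}\UT\subseteq(\interior\UT)^c$. Lipschitz continuity of $\Phi$ is immediate from Lipschitz continuity of $w$; Lipschitz continuity of the inverse, and in particular existence of $dg_t$ together with a bound $|dg_t|\le\cxxvi$ uniform in $N$ and $\om$, follows because the Jacobian determinant of $\Phi$ is bounded away from zero by the uniform positive outward-component lower bound on $w$. The main obstacle will be the construction of $w$ at the lower-dimensional strata of $\del\UT$, where the outward direction is not pointwise well-defined; the slit condition~(iv) is exactly what rules out the pathological almost-tangential configurations of adjacent boundary faces and allows the convex-combination rule to yield a uniformly Lipschitz and consistently outward vector field.
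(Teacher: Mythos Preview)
Your overall strategy---construct an outward-pointing Lipschitz vector field $w$ on $\del\UT$, define the flow $\Phi(x,t)=x+t\rho\,w(x)$, and set $g=\Phi^{-1}$---is precisely the paper's approach. The paper also uses conditions~(iii) and~(iv) in the definition of $\T$ in exactly the places you indicate: (iii) to keep flows from disjoint tiles apart, (iv) to bound the angle between adjacent boundary faces away from zero and thereby control the vector field at corner strata. Your construction of $w$ (face normals glued by convex combinations on lower strata) differs in implementation from the paper's (push forward the constant field $(1,0,\ldots,0)$ through Lipschitz boundary charts $h_j$), but both yield a uniformly Lipschitz outward field; the paper's chart version has the advantage that the key transversality property---that $w(y)$ has a uniformly large component orthogonal to $y-x$ whenever $|x-y|\le 2\rho$---drops out for free from the chart structure.

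There is one genuine technical gap. You argue that ``Lipschitz continuity of the inverse \ldots\ follows because the Jacobian determinant of $\Phi$ is bounded away from zero''. But $\del\UT$ is only a Lipschitz manifold and $w$ is only Lipschitz, so $D\Phi$ exists merely almost everywhere; an a.e.\ lower bound on $|\det D\Phi|$ does not by itself imply a global reverse-Lipschitz estimate (nor even local injectivity) for Lipschitz maps. The paper avoids this pitfall entirely: instead of invoking a Jacobian, it proves the inequality $|\Phi(x,t)-\Phi(y,s)|\ge c\,(|x-y|+|t-s|)$ directly, by decomposing $w(y)$ into components parallel and orthogonal to $x-y$, applying Pythagoras, and using the transversality property above together with the smallness of the Lipschitz constant of $w$ (achieved by scaling). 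This direct reverse-Lipschitz computation simultaneously gives injectivity of $\Phi$, Lipschitz continuity of $g$, and the uniform bound $|dg_t|\le\cxxvi$. You should replace the Jacobian appeal with such an explicit estimate; once you have it, your separate three-case injectivity discussion becomes redundant.
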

\begin{proof}
 For any $z\in\R^d\setminus0$, we can decompose a vector $w\in\R^d$ into
 $$ w = w_{\perp z} + w_{\parallel z} \,$$
 where $w_{\parallel z}$ is the orthogonal projection of $w$ onto $z\R$ and $w_{\perp z}:=w-w_{\parallel z}$. This decomposition is linear in $w$. 
 
 In order to construct the homeomorphism, we will define a vector field $w:\del\UT\to\R^d$. The boundary of $\UT$ is Lipschitz as it consists of $(d-1)$-dimensional polytopes. Thus there exist open sets $W_j\subset\R^d$ covering $\del\UT$, open sets $\ti{U}_j\subset\R^{d-1}$ and compatible Lipschitz continuous bijective maps $h_j:(-2,2)\times\ti{U}_j\to W_j$ mapping $\{0\}\times\ti{U}_j$ to $\del\UT$, $(-2,0)\times\ti{U}_j$ to $\interior(\UT)$ and $(0,2)\times\ti{U}_j$ to $(\UT)^c$, $j\in J$. We can further assume that for all $x,y\in\del\UT$ with $|x-y|\le2\rho$, there exists $j\in J$ with $x,y\in W_j$, because $|x-y|\le2\rho$ implies that $x$ and $y$ belong to the same tile or to adjacent tiles (the distance of non-adjacent tiles is greater than $3\rho$ by the definition of $\T$). Note that the angles between two adjacent polytopes are uniformly bounded away from zero. Indeed, if the defect is locally due to a missing tile, this follows from the fact that all tiles are $\ep$-perturbations of the given tessellation; and if the defect is locally an inserted wedge, i.e.\ it comes locally from a slit, then the angle of that wedge is bounded away from zero by condition (\ref{en:slit}) in the definition of $\T$. Therefore the Lipschitz constants of $(h_j)_{j\in J}$ can be uniformly bounded for all $N\in\N$ and $\om\in\Om$.
 
 We define the vector field $\ti{w}:\del\UT\to\R^d$ by pushing the field $u(z)=(1,0,\ldots,0)$, $z\in\{0\}\times\ti{U}_j$ forward with $h_j$, i.e.\ $\ti{w}(x):=h_j[(1,0,\ldots,0)+h_j^{-1}(x)]-x$ for suitable $j$, $x\in\del\UT$. Then $\ti{w}$ is uniformly Lipschitz and $|\ti{w}|$ is uniformly bounded away from zero and infinity (in $\om$ and $x$). Now we scale $\ti{w}$ to lower its Lipschitz constant and size. This yields a vector field $w:\del\UT\to\R^d$ such that for all $x,y\in\del\UT$:
 \begin{compactenum}[\hspace{1.5em}(i)]
  \item $x+tw(x)\notin\UT$ for all $t\in(0,1]$
  \item $|w(x)-w(y)|\le \clii |x-y|$
  \item $\cliii\le|w(x)|\le \rho$
  \item $|w(y)_{\perp(y-x)}|\ge |w(y)|/\cli$ if $0<|x-y|\le2\rho$.
 \end{compactenum}
 for some universal constants $\cli,\clii,\cliii>0$ satisfying
 \begin{equation} \label{eq:lipschitzconstants}
  (1+\cli)\clii < 1 \qquad\text{and}\qquad \rho + \sqrt{2}\clii < 1\,.
 \end{equation}
 Condition (iv), which is scale-invariant, already holds for $\ti{w}$: since $|x-y|\le2\rho$ implies $x,y\in U_j$ for some $j$, we can use $u(h^{-1}_j(y))=(1,0,\ldots,0)\perp h_j^{-1}(x)-h_j^{-1}(y)$ and the Lipschitz property of $h_j$ to derive (iv). Conditions (iii) and (ii) and Equation \eqref{eq:lipschitzconstants} are be fulfilled by scaling ($\cli$ and $\rho<\rho_\text{max}\le1$ are already fixed). Condition (i) follows from (iii) since the distance between two disjoint tiles is greater than $3\rho$ by the definition of $\T$.
 
 Using the vector field $w$, we define the function 
 \begin{eqnarray*}
  f:\, \del\UT\times[0,1] &\to& \I \\
       (x,t) &\mapsto& x + tw(x)\,,
 \end{eqnarray*}
 which will be the inverse of the homeomorphism $g$. 
 It is Lipschitz-continuous since
 $$ \big| f(x,t)-f(y,s)\big|
   \,=\, \big|(x-y)+t(w(x)-w(y))+(t-s)w(y)\big|
   \lle  (1+\clii)|x-y| + \rho |t-s| $$
 by properties (ii) and (iii) of $w$.
 
 We will also derive a reverse Lipschitz condition to conclude that $f$ is injective and its inverse is also Lipschitz-continuous. Thereto let $x,y\in\del\UT$ and $t,s\in[0,1]$. First we assume $x\ne y$. We estimate using the triangle inequality and the Lipschitz continuity of $w$
 \begin{eqnarray}
  \big|x-y+(t-s)w(y)\big|
  &\le&
  \big|x+tw(x)-y-sw(y)\big|+t\big|w(y)-w(x)\big| \notag\\
  &\le&
  \big|f(x,t)-f(y,s)\big|+\clii|x-y|\,. \label{eq:g3}
 \end{eqnarray}
 Pythagoras' Theorem yields that
 \begin{eqnarray}
  \big|x-y+(t-s)w(y)\big|^2
  &=&
  \big|x-y+(t-s)w(y)_{\parallel(x-y)}\big|^2 + \big|(t-s)w(y)_{\perp(x-y)}\big|^2 \notag\\
  &\ge&
  \big((1-\rho)|x-y|\big)^2 + \big|(t-s)w(y)_{\perp(x-y)}\big|^2 \label{eq:g4}
 \end{eqnarray}
 since $|(t-s)w(y)_{\parallel(x-y)}|\le|w(y)|\le\rho$. 
 
 The inequality  $\sqrt{2}\sqrt{a^2+b^2}\ge(a+b)$ yields (\ref{eq:g4}) without the squares, but with an additional $\sqrt{2}$ on the left hand side. Combing this with (\ref{eq:g3}) yields
 \begin{eqnarray}
  \sqrt{2}\big|f(x,t)-f(y,s)\big|
  &\ge&
  (1-\rho)|x-y| +\big|(t-s)w(y)_{\perp(x-y)}\big| - \sqrt{2}\clii|x-y| \notag\\
  &=&
  \big(1-(\rho+\sqrt{2}\clii)\big)\big|x-y\big|+ \big|w(y)_{\perp(x-y)}\big| \big|t-s\big|\,. \label{eq:g5}
 \end{eqnarray}
 Note that  $\rho+\sqrt{2}\clii<1$ by (\ref{eq:lipschitzconstants}).
 
 Now if $|x-y|\le2\rho$, then $|w(y)_{\perp(x-y)}|\ge|w(y)|/\cli\ge\cliii/\cli$. Otherwise $|w(y)_{\perp(x-y)}|\ge0$ and $\tfrac12|x-y|\ge\rho\ge\rho|t-s|$. Therefore, in both cases (\ref{eq:g5}) implies
 \begin{equation} \label{eq:revLipf}
  \big| f(x,t)-f(y,s)\big| \gge \cliv\big(|x-y|+|t-s|\big)
 \end{equation}
 for some constant $\cliv>0$. Now we consider the case $x=y$. Then
 $$  \big|f(x,t)-f(y,s)\big| \,=\, \big|x+tw(x)-y-sw(y)\big| \,=\, |t-s||w(x)| \ge \cliii|t-s| $$
 by property (iii). Thus \eqref{eq:revLipf} also holds in that case.
 
 Inequality \eqref{eq:revLipf} implies that $f$ is indeed injective. Moreover, property (i) implies 
 $ \del^{\overline{0\rho}}\UT := \image f \subseteq (\interior\UT)^c $.
 We define
 $$ g : \, \del^{\overline{0\rho}}\UT \,\to\, \del\UT\times[0,1]\,, \quad z\mapsto f^{-1}(z) $$
 as the inverse of $f$. Then $g(x)=(x,0)$ for all $x\in\UT$ holds by definition. Furthermore, \eqref{eq:revLipf} implies that $g$ is Lipschitz continuous. Thus the existence of $dg_t$ as well as the bound $|dg_t|\le\cxxvi$ for some $\cxxvi>0$ follow. 
\end{proof}

Finally in this section, we prove a bound of $\dist(V,\sod)$ and $dV$.
\begin{lem} \label{lem:interpolate}
 There exists a constant $\cv>0$ such that for all $N\in\N$ and $\om\in\Om$ and $\la$-almost all $x\in\I$
 $$  \dist(V(x),\sod)^2 \lle \cv \quad\text{and}\quad |dV(x)|\lle \cv\,.$$ 
\end{lem}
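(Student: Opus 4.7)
The plan is to use the explicit three-case definition of $V$ from Subsection~\ref{sec:extensionV} and bound the two quantities on each region separately. On the deep-defect region $(\UT)^c\cap(\del^{\overline{0\rho}}\UT)^c$, $V$ equals the constant rotation $\wti{R}\in\sod$, so both quantities vanish identically. On $\UT$, $V$ restricted to any tile $\t\in\T$ is the Jacobian of the piecewise-affine bijection $v_{\scriptscriptstyle\t}:\t\to\st^{\typts}$ and is constant on each subsimplex. Because $\t$ is an isometric copy of an element of $\Uept[\typts]$, its corners are $\ep$-close to those of $\st^{\typts}$ and $\la(\t)\ge\la(\st^{\typts})$; combined with the finiteness of $I$, this bounds $|\nabla v_{\scriptscriptstyle\t}|$ uniformly by some constant $\cv$ depending only on $\M$ and $\ep$. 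This gives $\dist(V,\sod)^2\le\cv$ on $\UT$, while $dV=0$ in the interior of each subsimplex, which is a set of full measure in $\UT$.

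The substantive step is the tube $\del^{\overline{0\rho}}\UT$, on which the interpolation formula defining $V^n$ passes to the $L^2$-limit to give $V(x)=(1-g_t(x))U(x)+g_t(x)\wti{R}$ with $U(x):=V(g_\del(x))$ inheriting the boundary values of $V$ on $\del\UT$. Since $V|_{\UT}$ is piecewise constant on the simplicial decomposition, so is $U$ on the tube, with jumps only on a $\la$-null set. The distance bound then follows from the triangle inequality and the previous step: $\dist(V(x),\sod)\le|V(x)-\wti{R}|=(1-g_t(x))|U(x)-\wti{R}|\le\cv+\sqrt{d}$. For $|dV|$, on each piece where $U\equiv C$ is constant one has $V_{ij}(x)=\wti{R}_{ij}+(1-g_t(x))(C_{ij}-\wti{R}_{ij})$, so that each component $\partial_kV_{il}-\partial_lV_{ik}$ of $dV$ is a linear combination of partial derivatives of $g_t$ with coefficients in $C-\wti{R}$. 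The uniform bound $|dg_t|\le\cxxvi$ from Lemma~\ref{lem:exist_g} together with $|C-\wti{R}|\le\cv+\sqrt{d}$ therefore produces the claimed pointwise bound on $|dV|$ almost everywhere. Enlarging $\cv$ once more to absorb all of these constants finishes the argument.

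The main obstacle I anticipate is ensuring the pointwise computation is legitimate for $V$ itself rather than only for the smooth approximants $V^n$. This is handled by first performing the computation on $V^n$, where everything is smooth and classical, and then passing to the limit off the $\la$-null set consisting of the simplex interfaces in $\UT$ together with the $g$-preimages of the $(d{-}2)$-skeleton of $\del\UT$ in the tube; off this null set the interpolation formula and the bounds are stable under $L^2$-convergence of $V^n$ to $V$, yielding the $\la$-almost-everywhere estimates.
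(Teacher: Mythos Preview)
Your proof is correct and follows essentially the same approach as the paper: split into the three regions, use the $\ep$-perturbation condition and finiteness of $I$ to bound $V$ uniformly on $\UT$, observe that $dV$ vanishes on $\UT$ and on the deep defect region, and on the tube combine the interpolation formula with the bound $|dg_t|\le\cxxvi$ from Lemma~\ref{lem:exist_g}. The only cosmetic difference is ordering: the paper performs the product-rule computation on the smooth approximants $V^n$ (after noting one may choose $\ti V^n$ to be locally a gradient so that $d\ti V^n=0$) and then passes to the limit, whereas you compute on the piecewise-constant limit $V$ first and postpone the justification via $V^n$ to your final paragraph; the content is the same.
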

\begin{proof}
 First we note that $V$ and $V^n$ are uniformly bounded on $\UT$ and therefore also on $\del\UT$ since any tile $\t\in\T$ is, up to translation and rotation, $\ep$-close to $\st^\typts$. Moreover, $\wti{R}$ is uniformly bounded since $\sod$ is compact. Thus $V^n$ and therefore $V$ is uniformly bounded on the whole $\I$, which implies the first inequality.
 
 For the second inequality, we first note that since $V\!\!\upharpoonright_\UT$ is locally the derivative of a continuous piecewise affine linear function, we could also choose $\ti{V}^n$ locally as a derivative. Therefore $dV=0$ on $\UT$. Moreover, $dV=0$ on $(\UT)^c\cap(\del^{\overline{0\rho}}\UT)^c$ since $\wti{R}$ is constant. Finally, we calculate for $x\in\del^{\overline{0\rho}}\UT$
 \begin{eqnarray*}
  dV^n(x) &=&
  (1-g_t(x))d\ti{V}^n(g_\del(x)) - dg_t(x)\wedge \ti{V}^n(g_\del(x)) + g_t(x) d\wti{R} + dg_t(x)\wedge \wti{R} \\
  &=&
  -dg_t(x)\wedge\big(V^n(g_\del(x))-\wti{R}\big)
 \end{eqnarray*}
 since $d\ti{V}^n=0$ on $\UT$ since $\ti{V}^n$ is locally a derivative. Since $|dg_t|\le\cxxvi$ by Lemma~\ref{lem:exist_g}, since $V^n$ and $\wti{R}$ are uniformly bounded and since $V^n\to V$, the second inequality follows.
\end{proof}

\subsubsection{Cardinality of Subsets of $\P$ and $\T$} \label{sec:cardinality}

In this section, we give some definitions and some lemmas, which estimate the cardinality of several subsets of $\P$ and $\T$.

First we define the \emph{standard configuration} $\sta\in\Om$ with points $\Q$ and tiles $\U$ as a fixed element of $\Om$ such that the crystal is exactly the tessellation $\M$. More precisely, using the notation $\V(\M\!\!\upharpoonright_{\I})$ for the vertices of $\M$ inside $\I$, we require
$$ \Q := \P(\sta) = \V(\M\!\!\upharpoonright_{\I})\quad\text{ and thus }\quad \U := \T(\sta) = \M\!\!\upharpoonright_{\I} \,. $$
The choice of $\rho<\rho_\text{max}$ ensures the last equation and $\sta\in\Om$ (if $N$ is large enough, depending on $\co$).

We will need some subsets of $\T$ and $\P$. We define the set of boundary tiles by
$$ \del\T \,:=\, \big\{\t\in\T: \t\cap\del\UT\ne \emptyset\big\} $$
and for $i\in I$ the set 
$$ \T^i \,:=\, \big\{\t\in\T: \typ{\t}=i\big\}\,,$$ 
which consists of all tiles of type $i$ (recall that $\typt$ denotes the type of $\t$). Obviously, $\U^i$ denotes the set of all tiles of type $i$ in $\U$, $i\in I$. Let us further recall that we already defined the surface points $\del\P$ in \eqref{eq:surfacepoints} as follows:
$$ \del\P \,:=\, \big\{ x\in\P \mid x\in\del\UT\text{ or }x\notin\V(\T)\big\} \,,$$
where $\del\UT$ denotes the topological boundary and $\V(\T)$ is the set of points of $\P$, which are vertices of any tile $\t\in\T$. Furthermore, we need the notation
$$ \Pe \,:=\, \big\{x\in\P\mid x\notin\V(\T)\big\} $$
for the exterior points. Note that the exterior points, which are not contained in any perturbed tile of $\T$, are contained in the set of surface points. Note further that the standard configuration has empty boundary, i.e.\ $\del\Q=\emptyset$ and $\del\U=\emptyset$.

These sets are illustrated in Figure~\ref{fig:subsetsPT}. It shows the example of a crystal used in Section~\ref{ssec:crystal}. The defects are shaded in dark grey. The boundary tiles are light grey shaded. All surface points are drawn in grey. The five surface points which also are exterior points are marked with a circle. 
\begin{figure}
 \begin{center}
  \includegraphics[width=1\textwidth]{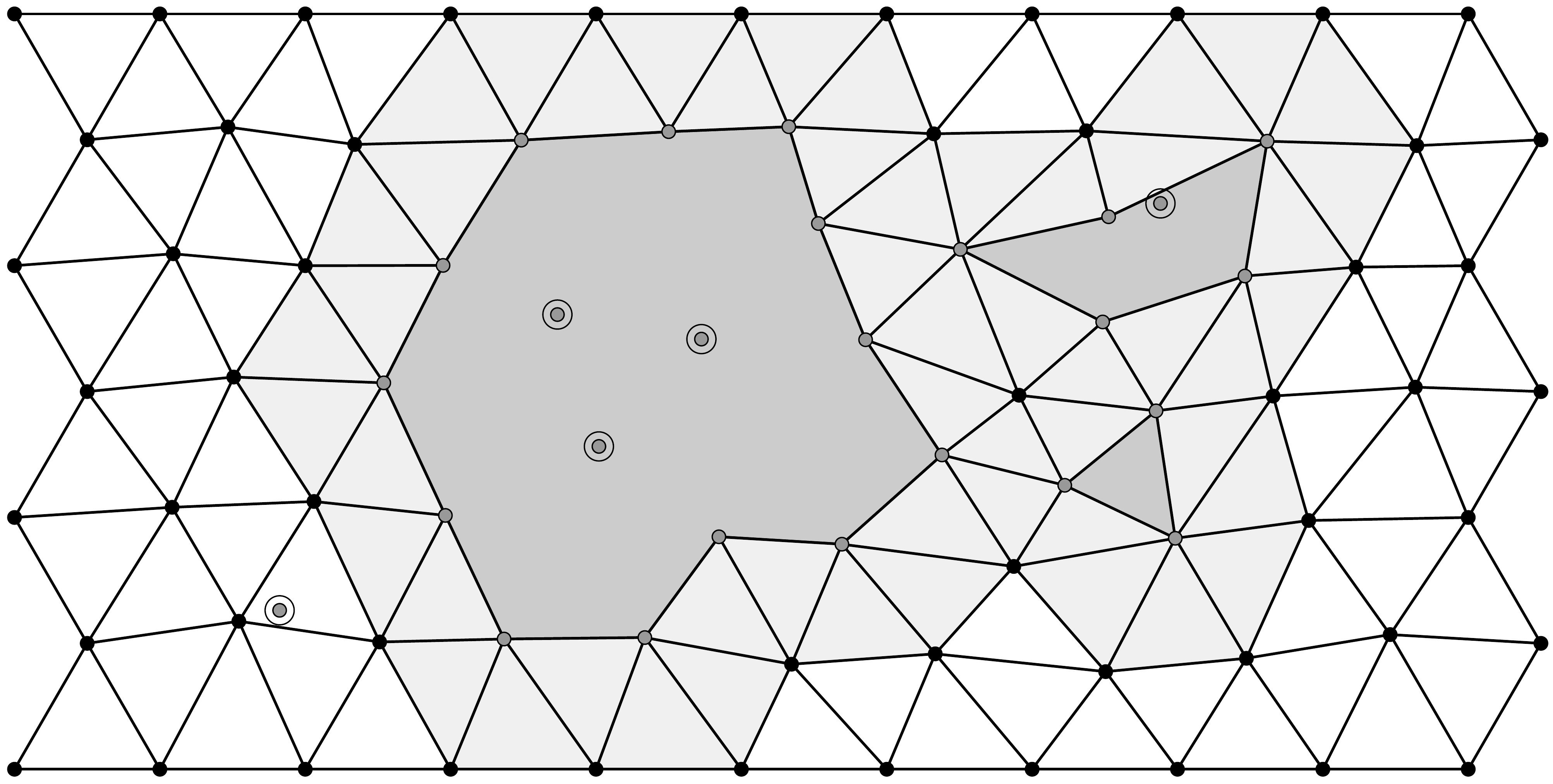}
 \end{center}
 \caption{The boundary tiles (light grey), the surface points (grey) and the exterior points (with circle) of a crystal with defects (dark grey area) \label{fig:subsetsPT}} 
\end{figure}
Note that one of the exterior points is inside the crystal but is not a vertex of any tile.

Similarly to the tile types, we may also partition the vertices $\V(\M)$ of $\M$ into types $j\in J$, depending on their adjacent tiles. The assignment of the types to the tiles and vertices shall in particular imply that, for all $i,l\in I, j\in J$ the quantities
\begin{equation} \label{eq:bef}
b_{i,l} := \sum_{\substack{\ti{\st}\in\M \\ \typ{\ti{\st}}=l}} \1_{\st^i\cap\ti{\st}\ne\emptyset} 
\,,\qquad
e_{ij} := \sum_{\substack{x\in\V(\M)\\ \typv{x}=j}} \1_{x\in\st^i}
\,,\qquad
f_{ij} := \sum_{\substack{\st\in\M\\ \typ{\st}=i}} \1_{x^j\in\st}\,
\end{equation}
are well-defined, finite and independent of the choice of $\st^i$ of type $i$ and $x^j\in\V(\M)$ of type $j$, respectively. These quantities are interpreted as follows: $b_{i,l}$ denotes the number of neighbouring tiles of type $l$ to a tile of type $i$, and $e_{ij}$ denotes the number of adjacent vertices of type $j$ to a tile of type $i$, and finally $f_{ij}$ denotes the number of adjacent tiles of type $i$ to a vertex of type $j$. 

In fact, we need the different vertex types only in this section; therefore the letter $j$ may denote various index variables later. But the letter $i$ will only be used for a tile type.

The following lemma shows that the number of tiles of type $i$ is bounded by the number of such tiles in the standard configuration, up to an error in terms of the number of boundary tiles.
\begin{lem} \label{lem:estTUdelT}
 There exists a constant $\cvi>0$ such that for all $N\in\N$, $\om\in\Om$ and $i\in I$ the following inequality holds:
 $$ |\T^i| \lle |\U^i| + \cvi \,|\del\T| \,.$$
\end{lem}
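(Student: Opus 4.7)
The plan is to compare the tile-type statistics of $\T$ with those of the reference configuration $\U=\M\!\!\upharpoonright_\I$ by double-counting adjacent pairs, combined with a volume argument that pins down at least one type for which $\T$ has no more tiles than $\U$. Concretely, I would count the ordered adjacent pairs $N_{i,l}:=\#\{(\t,\ti\t)\in\T^i\times\T^l:\t\text{ and }\ti\t\text{ share a face}\}$. Local $\M$-likeness of $\T$ implies that every non-boundary tile $\t\in\T^i\setminus\del\T$ has exactly $b_{i,l}$ neighbours of type $l$ in $\T$: if some neighbour were missing, one of the faces of $\t$ would not be shared with another tile of $\T$ and would therefore lie in $\del\UT$, contradicting $\t\notin\del\T$. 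Hence
$$(|\T^i|-|\del\T\cap\T^i|)\,b_{i,l}\,\le\,N_{i,l}\,\le\,|\T^i|\,b_{i,l},$$
and by the symmetry $N_{i,l}=N_{l,i}$ the same double bound applies with $i$ and $l$ swapped. Applying the identical count to $\U$, which has no boundary tiles since $\I$ is a torus, produces the exact identity $|\U^i|\,b_{i,l}=|\U^l|\,b_{l,i}$. Combining these for any ordered pair with $b_{i,l}>0$ and using $b_{l,i}/b_{i,l}=|\U^i|/|\U^l|$ yields the master estimate
$$\frac{|\T^i|}{|\U^i|}\,\le\,\frac{|\T^l|}{|\U^l|}+\frac{|\del\T\cap\T^i|}{|\U^i|}.$$

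To turn this chain into an absolute bound I use a volume constraint: the tiles of $\T$ have pairwise disjoint interiors in $\I$ (being locally $\M$-like) and satisfy $\la(\t)\ge\la(\st^{\typts})$ by the definition of $\Uept$, so
$$\sum_{i\in I}|\T^i|\,\la(\st^i)\,\le\,\la(\I)\,=\,\sum_{i\in I}|\U^i|\,\la(\st^i),$$
forcing some $l^{*}\in I$ to satisfy $|\T^{l^{*}}|\le|\U^{l^{*}}|$. The type-adjacency graph on $I$ (with an edge $\{i,l\}$ whenever $b_{i,l}>0$) is connected because $\M$ itself is. Picking a path $i=i_0,i_1,\ldots,i_k=l^{*}$ of length $k\le|I|-1$ and iterating the master estimate along it yields
$$\frac{|\T^i|}{|\U^i|}\,\le\,1+\sum_{j=0}^{k-1}\frac{|\del\T\cap\T^{i_j}|}{|\U^{i_j}|}.$$
Multiplying through by $|\U^i|$ and noting that $|\U^i|/|\U^{i_j}|=n_i/n_{i_j}$ is a uniform constant independent of $N$ and $\om$ (where $n_l$ denotes the number of tiles of type $l$ in one period $B_0$), I obtain $|\T^i|\le|\U^i|+\cvi|\del\T|$ with $\cvi:=|I|\max_{i,l\in I}n_i/n_l$.

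The main obstacle is the combinatorial identity underlying the first displayed bound: one has to extract from the definition of a locally $\M$-like set (in particular from the requirement that the pieces $\ti\T_j$ overlap only in whole tiles) that the chart $v_j$ around an interior tile $\t$ carries the full $\T$-neighbourhood of $\t$ onto a full $\M$-neighbourhood of $v_j(\t)$, so that ``$b_{i,l}$ neighbours of type $l$'' is genuinely exact for $\t\notin\del\T$. Once this local combinatorial statement is in place, the rest is elementary algebra and bookkeeping.
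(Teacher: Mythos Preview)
Your proposal is correct and follows essentially the same strategy as the paper: double-count adjacent tile pairs to compare $|\T^i|$ and $|\T^l|$ up to a boundary error, use the volume inequality $\sum_i|\T^i|\la(\st^i)\le\la(\I)=\sum_i|\U^i|\la(\st^i)$ to anchor one type with $|\T^{i_0}|\le|\U^{i_0}|$, and then propagate along a path in the type-adjacency graph. The only cosmetic differences are that the paper counts tile pairs with non-empty intersection (which is exactly how $b_{i,l}$ is defined there) rather than face-sharing pairs, and packages the comparison via constants $c_{i,l}=b_{l,i}/b_{i,l}$ rather than the ratios $|\T^i|/|\U^i|$.
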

\begin{proof}
 First we show that there exist constants $c_{i,l}>0$, $i,l\in I$, and $\cvi>0$ such that for all $N\in\N$ and $\om\in\Om$
 \begin{equation} \label{eq:cilTi-Tl}
  \big|c_{i,l}|\T^l|-|\T^i|\big| \lle \cvi \,|\del\T|\,.
 \end{equation}
 Let $i,l\in I$. We define the quantity
 $$ A:= \adjustlimits\sum_{\t\in\T^i}\sum_{\tit\in\T^l} \1_{\t\cap\tit\ne\emptyset} \,.$$
 By the definition of $b_{i,l}$ in equation (\ref{eq:bef}), it follows that, for all $\t\in\T^i$,
 $$ 0 \le \sum_{\tit\in\T^l} \1_{\t\cap\tit\ne\emptyset} \le b_{i,l} 
 \quad\text{ and even }\quad \sum_{\tit\in\T^l} \1_{\t\cap\tit\ne\emptyset} = b_{i,l} 
 \quad\text{if}\,\,\,\, \t\in\T^i\setminus\del\T\,.$$
 Summing over all $\t\in\T^i$ yields
 $$ 0\lle b_{i,l} |\T^i| - A = \sum_{\t\in\T^i}\Big(b_{i,l}-\sum_{\tit\in\T^l} \1_{\t\cap\tit\ne\emptyset}\Big) \lle b_{i,l} |\del\T\cap\T^i| \lle b_{i,l} |\del\T| \,.$$
 Analogously, it follows that
 $$ - b_{l,i}|\del\T| \lle A - b_{l,i}|\T^l| \lle 0 \,.$$
 Adding these two inequalities, we get
 \begin{equation} \label{eq:blidelT}
  - b_{l,i} |\del\T| \lle b_{i,l} |\T^i| - b_{l,i}|\T^l| \lle b_{i,l} |\del\T|
 \end{equation}
 Now we observe that either $b_{i,l}=0=b_{l,i}$ or $b_{i,l}\ne0 \wedge b_{l,i}\ne0$ since $b_{i,l}$ counts the   tiles of type $l$ adjacent to a tile of type $i$. In the latter case, we can define $c_{i,l}:=b_{l,i}/b_{i,l} \in (0,\infty)$ and receive
 \begin{equation} \label{eq:bilne0}
  \big||\T^i| - c_{i,l}|\T^l|\big| \lle \max\{1,c_{i,l}\} \,|\del\T| 
 \end{equation}
 by equation (\ref{eq:blidelT}).
 
 In the general case, there is a sequence $i=i_0,i_1,\ldots,i_n=l$ with some $n\le |I|$ such that $b_{i_{k\!-\!1},i_k}\ne0$ for all $k\in\{1,\ldots,n\}$ since the tessellation $\M$ is connected. Therefore we can define $c_{i,l}:=\prod_{k=1}^nc_{i_{k\!-\!1},i_k}\in(0,\infty)$. Using a telescope sum it follows that
 \begin{eqnarray*}
  \big||\T^i| - c_{i,l}|\T^l|\big| &=&
  \Big||\T^i| - \textstyle\prod\limits_{k=1}^n\displaystyle\!\!c_{i_{k\!-\!1},i_k}\,|\T^l|\Big| 
  = \Big|\sum_{j=1}^n\Big(\textstyle\prod\limits_{k=1}^{j\!-\!1}\displaystyle\!\!c_{i_{k\!-\!1},i_k}\, |\T^{i_{j\!-\!1}}| - \textstyle\prod\limits_{k=1}^{j}\displaystyle\!\!c_{i_{k\!-\!1},i_k}\,|\T^{i_{j}}|\Big)\Big| \\
  &\le&
  \sum_{j=1}^n \textstyle\prod\limits_{k=1}^{j\!-\!1}\displaystyle\!\!c_{i_{k\!-\!1},i_k} \cdot \Big| |\T^{i_{j\!-\!1}}|-c_{i_{j\!-\!1},i_j}|\T^{i_j}|\Big| \\
  &\stackrel{(\ref{eq:bilne0})}{\le}&
  \sum_{j=1}^n\textstyle\prod\limits_{k=1}^{j\!-\!1}\displaystyle\!\!c_{i_{k\!-\!1},i_k}\max\{1,c_{i_{j\!-\!1},i_j}\} \,|\del\T|
  \lle \cvi \,|\del\T|\,,
 \end{eqnarray*}
 where $\cvi$ is the supremum of the last sum over all possible choices of the sequence $i_0,i_1,\ldots,i_n$. For the last line, we used the already covered case $b_{i_{j\!-\!1},i_j}\ne0$. Thus claim \eqref{eq:cilTi-Tl} follows.
 
 Using $\la(\t)\ge\la(\st^i)$ for all $i\in I$ and $\t\in\Uept$, we estimate
 $$
  \sum_{i\in I}|\T^i|\la(\st^i) \,=\, \sum_{\t\in\T} \la(\st^{\typts}) \lle \sum_{\t\in\T} \la(\t) \lle \la(\I) \,=\, \la(\Un{\,\U}) \,=\, \sum_{i\in I} |\U^i| \la(\st^i)
 $$
 since the standard configuration covers the whole box with standard tiles. Therefore there exists $i_0=i_0(\om)\in I$ with $|\T^{i_0}| \le |\U^{i_0}|$.
 
 Let $i\in I$. Using claim \eqref{eq:cilTi-Tl} it follows that
 $$ |\T^i| \lle c_{i,i_0}|\T^{i_0}|+\cvi|\del\T| \lle c_{i,i_0}|\U^{i_0}|+\cvi|\del\T| = |\U^i|+\cvi|\del\T|\,. $$
 The last equality again follows from claim \eqref{eq:cilTi-Tl}, applied to $\U$, since $\del\U=\emptyset$.
\end{proof}

In the next two lemmas, we use the relation $\asymp$ to indicate that the quotient of the left and of the right is uniformly in $N\in\N$ and $\om\in\Om$ bounded away from zero and infinity. But we also state the inequalities we need in the sequel explicitly. First we show that different measurements of the boundary have approximately equal size.
\begin{lem} \label{lem:estBoundary}
 There are constants $\g_i>0$, $i\in I$, such that
 $$ |\del\P\setminus\Pe| \aasymp |\P\setminus\Pe| - \sumigt \aasymp |\del\T| \aasymp \la(\del^{\overline{0\rho}}\UT)\,.$$
 In particular it is shown that there are constants $\cix>0$, $\cx>0$ and $\g_i>0$, $i\in I$, such that for all $N\in\N$ and $\om\in\Om$
 \newcounter{enumisave}
 \begin{enumerate}[\hspace{1.5em}\upshape(a)\hspace{0.5em}]
  \item \label{asa} $|\del\P| \gge |\P| - \sum_{i\in I}\g_i|\T^i|\gge 0$,
  \item \label{asb} $|\del\T| \lle \cix \big(|\P| - \sum_{i\in I}\g_i|\T^i|\big)$ and
  \item \label{asc} $\la(\del^{\overline{0\rho}}\UT) \lle \cx |\del\T|$.
  \setcounter{enumisave}{\value{enumi}}
 \end{enumerate}
\end{lem}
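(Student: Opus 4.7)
My plan is to choose the coefficients $\g_i$ using the combinatorial data from~\eqref{eq:bef} so that $\sumigt$ counts the interior vertices of the crystal up to a controlled boundary error. Concretely, I would set
$$ \g_i \,:=\, \sum_{j\in J}\frac{e_{ij}}{\sum_{i'\in I}f_{i',j}} \,. $$
In the defect-free standard configuration this choice yields $\sum_{i}\g_i|\U^i|=|\Q|$, since each vertex $x$ of type $j$ in $\Q$ has $\sum_{i'}f_{i',j}$ adjacent tiles, each incidence carrying weight $1/\sum_{i'}f_{i',j}$. For a general $\om\in\Om$, swapping the order of summation gives $\sumigt=\sum_{x\in\V(\T)}w(x)$ with
$$ w(x) \,:=\, \sum_{\t\in\T,\,x\in\t}\frac{1}{\sum_{i'\in I}f_{i',j(\t,x)}}\,, $$
where $j(\t,x)$ is the vertex type of $x$ at $\t$ inherited from the bijection between $\t$ and its standard template. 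Local $\M$-likeness of $\T$ forces $j(\t,x)$ to take the same value $j$ at every tile $\t\in\T$ containing $x$, so $w(x)=k_x/\sum_{i'}f_{i',j}$ where $k_x$ is the number of tiles in $\T$ at $x$. For $x\in\V(\T)\setminus\del\UT$ the link is complete, so $w(x)=1$; for $x\in\V(\T)\cap\del\UT=\del\P\setminus\Pe$ at least one tile of the ideal link is absent, giving $w(x)\le 1-c_0$ with $c_0:=1/\max_{j\in J}\sum_{i'\in I}f_{i',j}>0$. This yields the two-sided bound
$$ c_0\,|\del\P\setminus\Pe| \,\le\, |\V(\T)| - \sumigt \,\le\, |\del\P\setminus\Pe|\,. \qquad(\ast) $$

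Claim~(\ref{asa}) follows directly from $(\ast)$: using $|\P|=|\V(\T)|+|\Pe|$, we get $|\P|-\sumigt=|\Pe|+(|\V(\T)|-\sumigt)\gge 0$, and since $|\del\P|=|\del\P\setminus\Pe|+|\Pe|$, the difference $|\del\P|-(|\P|-\sumigt)=|\del\P\setminus\Pe|-(|\V(\T)|-\sumigt)\gge 0$ by the right-hand inequality of $(\ast)$.

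For claim~(\ref{asb}) the key geometric fact is that every $\t\in\del\T$ meets $\del\UT$ in at least a face and so contains at least one vertex in $\V(\T)\cap\del\UT=\del\P\setminus\Pe$, while conversely each boundary vertex is a vertex of at most $\max_{j}\sum_{i}f_{ij}$ tiles; this yields $|\del\T|\le C|\del\P\setminus\Pe|$ for a model-dependent $C$. Combining with the left-hand inequality of $(\ast)$ and $|\Pe|\gge 0$ gives $|\del\T|\lle (C/c_0)(|\P|-\sumigt)$, i.e.\ (\ref{asb}) with $\cix=C/c_0$. For claim~(\ref{asc}) I would invoke the Lipschitz parametrisation $f$ from Lemma~\ref{lem:exist_g}: a change of variables controls $\la(\del^{\overline{0\rho}}\UT)$ by a constant multiple of the $(d-1)$-dimensional area of $\del\UT$, which is itself at most $|\del\T|$ times a uniform upper bound on the surface area of any tile in $\Uept[i]$, $i\in I$. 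Such a uniform bound exists because every tile is an $\ep$-perturbation of one of finitely many standard tiles.

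The main subtlety is the strict shortfall $w(x)\le 1-c_0$ at boundary vertices: it relies on identifying $\del\P\setminus\Pe$ as exactly $\V(\T)\cap\del\UT$ and on local $\M$-likeness forcing a common vertex type $j$ at all tiles of $\T$ meeting $x$, so that the missing ideal tile at a boundary vertex contributes a quantitatively positive defect to $w(x)$. Once this one-sided bound is in hand, the remainder reduces to routine boundary-counting together with the tube estimate from Lemma~\ref{lem:exist_g}.
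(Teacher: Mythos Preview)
Your choice of $\g_i=\sum_{j}e_{ij}/\sum_{i'}f_{i'j}$ is a clean variant of the paper's weights and the double-counting identity $\sumigt=\sum_{x\in\V(\T)}w(x)$ is correct. The gap is in the claim that $w(x)\le 1-c_0$ for \emph{every} $x\in\V(\T)\cap\del\UT$. This fails precisely at a slit: a vertex $x$ can lie on $\del\UT$ while still having its full complement of adjacent tiles, i.e.\ $k_x=\sum_{i'}f_{i',j}$ and hence $w(x)=1$. The paper singles out exactly this case by introducing $\P^j_*:=\{x\in\P^j\mid\exi i\in I_j:\sum_{\t\in\T^i}\1_{x\in\t}<f_{ij}\}$ and observing that a boundary vertex with no missing tile must be adjacent to a vertex $y\in\P^{j'}_*$ where a tile \emph{is} missing; bounded vertex degree then yields $\sum_j|\P^j_*|\ge\cxxx\,|\del\P\setminus\Pe|$, which salvages the lower bound in $(\ast)$. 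Without this ``pass the defect to a neighbour'' step, your left-hand inequality in $(\ast)$---and therefore claim~(\ref{asb})---does not follow.

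A second, smaller inaccuracy: it is not true that every $\t\in\del\T$ meets $\del\UT$ in a full face; the paper explicitly distinguishes the subset $\del^*\T$ of boundary tiles with a boundary face. Your argument for (\ref{asb}) only needs that each $\t\in\del\T$ contains \emph{some} vertex of $\del\UT$, which does hold and is what the paper uses. For (\ref{asc}), your sketch via the Lipschitz parametrisation from Lemma~\ref{lem:exist_g} and a uniform bound on tile surface area matches the paper.
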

\begin{proof}
  First we show $ |\del\P\setminus\Pe| \asymp |\P\setminus\Pe| - \sum_{i\in I}\g_i|\T^i|$. Thereto, we partition all points $\P$ into the points $\P^j$ of type $j\in J$ and into the exterior points $\Pe$. Of course, a point in $\P\setminus\Pe$ inherits its type from the corresponding point in $\M$. Since  $e_{ij}=\sum_{x\in\P^j}\1_{x\in\t}$ is the number of vertices of type $j$ adjacent to any tile $\t\in\T$ of type $\typt=i$ (including the boundary tiles), see equation (\ref{eq:bef}), it follows that
 \begin{equation} \label{eq:sumxt}
  e_{ij}\,|\T^i| \,=\,\sum_{x\in\P^j}\sum_{\t\in\T^i}\1_{x\in\t}\,. 
 \end{equation}
 We observe that $e_{ij}=0$ iff $f_{ij}=0$ and define
 \begin{equation} \label{eq:defgi}
  \g_i\,:=\,\sum_{j\in J} \frac{1}{|I_j|}\,\frac{e_{ij}}{f_{ij}}\,\1_{f_{ij}\ne0}
 \end{equation}
 with $I_j:=\{i\in I\mid f_{ij}\ne0\}$. Note that $I_j\ne\emptyset$. Therefore
 \begin{align} \label{eq:as2anfang}
   &|\P|-|\Pe|-\sum_{i\in I}\g_i|\T^i| \,=\, \sum_{j\in J}|\P^j|-\sum_{i\in I}\sum_{j\in J} \frac{1}{|I_j|}\,\frac{e_{ij}}{f_{ij}}\,\1_{f_{ij}\ne0}|\T^i| = \sum_{j\in J}\Big(|\P^j|-\sum_{i\in I_j}\frac{1}{|I_j|}\,\frac{e_{ij}}{f_{ij}}|\T^i| \Big)\notag\\
   &\quad=\, \sum_{j\in J}\sum_{i\in I_j}\frac{1}{|I_j|}\Big(|\P^j|-\frac{e_{ij}}{f_{ij}}|\T^i| \Big)
   \stackrel{(\ref{eq:sumxt})}{=}
   \sum_{j\in J}\sum_{i\in I_j}\frac{1}{|I_j|}\sum_{x\in\P^j}
   \Big(1-\frac{1}{f_{ij}}\sum_{\t\in\T^i}\1_{x\in\t}\Big)
 \end{align}
 
 Now we examine the expression
 $$ A_i(x):= 1-\frac{1}{f_{ij}}\sum_{\t\in\T^i}\1_{x\in\t} $$
 for $x\in\P^j$. 
 Since $f_{ij}$ counts  number of tiles of type $i$ adjacent to a vertex of type $j$, it follows that $A_i(x)=0$ if $x\notin\del\P$ and $A_i(x)\le1$ in general. Therefore we can continue (\ref{eq:as2anfang}) as follows:
 $$
 |\P\setminus\Pe|-\sum_{i\in I}\g_i|\T^i| \,\stackrel{\text{\eqref{eq:as2anfang}}}{=}\,
 \adjustlimits\sum_{j\in J}\sum_{i\in I_j}\frac{1}{|I_j|}\sum_{x\in\P^j} A_i(x)
 \lle
 \adjustlimits\sum_{j\in J}\sum_{i\in I_j}\frac{1}{|I_j|}\,|\P^j\cap\del\P| \,=\, |\del\P\setminus\Pe| \,,
 $$
 which is one of the two desired inequalities. By adding $|\Pe|$, this also shows the main inequality of Assertion (\ref{asa}) since $\Pe\subset\del\P$; ``$\ge0$'' follows from (\ref{asb}).
 
 For the other inequality, we define $\P^j_*:=\{x\in\P^j\mid \exi i\in I_j: \sum_{\t\in\T^i}\1_{x\in\t} < f_{ij}\}$. We observe that $x\in\P^j_*$ for some $j$ if a tile is missing which should be adjacent to $x$. Thus $\P^j_*\subseteq\del\P$. If $x\in\del\P\setminus\bigcup_{j}\P^j_*$, then the defect at $x$ is induced by a slit. In that case there is a vertex $y\in\del\P\setminus\Pe$ adjacent to $x$ such that a tile is missing at $y$, i.e.\ $y\in \P^j_*$ for some $j$. Since the vertex degree is uniformly bounded, we conclude
 \begin{equation} \label{eq:PsterngedelP}
  \sum_{j\in J} |\P^j_*| \gge \cxxx |\del\P\setminus\Pe|
 \end{equation}
 for some $\cxxx>0$.
 For $x\in\P^j_*$, let $i_0(x)$ be the smallest $i\in I_j$ with $\sum_{\t\in\T^i}\1_{x\in\t} < f_{ij}$. It follows that
 $$ A_i(x) \ge |I|\cvii \1_{i=i_0(x)} $$
 for $x\in\P^j\cap\del\P$ with
 $ \cvii := \tfrac{1}{|I|}\,\min\big\{\tfrac{1}{f_{ij}}\mid i\in I_j, j\in J\big\} $.
 Plugging this into (\ref{eq:as2anfang}) yields
 \begin{align}  
  |\P\setminus\Pe|-\sum_{i\in I}\g_i|\T^i| &\,=\,
 \adjustlimits\sum_{j\in J}\sum_{i\in I_j}\frac{1}{|I_j|}\sum_{x\in\P^j} A_i(x)
 \gge
 \adjustlimits\sum_{j\in J}\sum_{i\in I_j}\frac{1}{|I_j|}\sum_{x\in\P^j_*}
 |I|\cvii \1_{i=i_0(x)} \notag\\
 &\,=\,\cvii\sum_{j\in J}\frac{|I|}{|I_j|}\adjustlimits\sum_{x\in\P^j_*}\sum_{i\in I_j}\1_{i=i_0(x)}
 =\cvii\sum_{j\in J}\frac{|I|}{|I_j|}\sum_{x\in\P^j_*} 1 \notag\\
 &\gge
 \cvii \sum_{j\in J} |\P^j_*|
 \gge
 \cxxx\cvii \,|\del\P\setminus\Pe|
 \,, \label{eq:delPohnePele}
 \end{align}
 as desired. We used \eqref{eq:PsterngedelP} in the last step.  
 
 Second we show $|\del\P\setminus\Pe| \asymp |\del\T|$. For all $x\in\del\P\setminus\Pe$ there exists at least one $\t\in\del\T$ with $x\in\t$. Therefore
 \begin{eqnarray*}
  |\del\P\setminus\Pe| &\le&
  \adjustlimits\sum_{x\in\del\P\setminus\Pe} \sum_{\t\in\del\T} \1_{x\in\t}
  \,\lle\,
  \adjustlimits\sum_{i\in I}\sum_{\t\in(\T^i\cap\del\T)}\adjustlimits\sum_{j\in J}\sum_{x\in\P^j}\1_{x\in\t} \\
  &\stackrel{(\ref{eq:bef})}{=}& 
  \adjustlimits\sum_{i\in I}\sum_{\t\in(\T^i\cap\del\T)} \sum_{j\in J} e_{ij}
  \,\lle\, |\del\T| \,\max_{i\in I}\big\{\textstyle\sum_{j\in J}\displaystyle e_{ij}\big\} \,.
 \end{eqnarray*} 
 Conversely, for each $\t\in\del\T$, there exists at least one $x\in\del\P\setminus\Pe$ with $x\in\t$. Therefore
 \begin{eqnarray*}
  |\del\T| &\le&
  \adjustlimits\sum_{\t\in\del\T} \sum_{x\in\del\P\setminus\Pe}  \1_{x\in\t}
  \lle\,
  \adjustlimits\sum_{j\in J}\sum_{x\in(\P^j\cap\del\P)}\adjustlimits\sum_{i\in I}\sum_{\t\in\T^i}\1_{x\in\t} \\
  &\stackrel{(\ref{eq:bef})}{=}& 
  \adjustlimits\sum_{j\in J}\sum_{x\in(\P^j\cap\del\P)} \sum_{i\in I} f_{ij}
  \lle |\del\P\setminus\Pe| \,\max_{j\in J}\big\{\textstyle\sum_{i\in I}\displaystyle f_{ij}\big\} 
  \,.
 \end{eqnarray*}
 This and \eqref{eq:delPohnePele} imply Assertion (\ref{asb}) with $\cix:=\max_{j\in J}\sum_{i\in I}f_{ij}/(\cxxx\cvii)$.
 
 Finally, we show $|\del\T| \asymp \la(\del^{\overline{0\rho}}\UT)$. For a set $A\subset\R^d$, let $\OO(A)=\la_{d\!-\!1}(\del A)$ denote the surface area of $A$. Using the Lipschitz continuous homeomorphism $g$, we conclude that  
 \begin{equation*}
   \rho \,\OO(\UT) \aasymp \la(\del^{\overline{0\rho}}\UT) \,.
 \end{equation*}
 Moreover,
 \begin{equation*}
  \OO(\UT) \lle \sum_{\t\in\del\T} \OO(\t) \lle \cxxxii \,|\del\T|
 \end{equation*}
 for some constant $\cxxxii>0$ since the surface area of a tile is uniformly bounded. But for the other direction one has to be careful, since there may exists boundary tiles which do not have a face which is part of $\del\UT$. But let $\del^*\T$ denote the set of boundary tiles having a face which is contained in $\del\UT$. Since for each tile $\t\in\del\T\setminus\del^*\T$ there exists a tile $\tit\in\del^*\T$ with $\t\cap\tit\ne\emptyset$ and since each tile $\tit\in\del^*\T$ intersects at most $\max_{i,l\in I}b_{i,l}$ other tiles, there is a constant $\cxxxiii>0$ such that $|\del^*\T|\ge\cxxxiii|\del\T|$. Since the area of a face of a tile is at least $\cxxxiv>0$ (say), 
 $$ \OO(\UT) \gge \cxxxiv \,|\del^*\T| \gge \cxxxiv\cxxxiii\,|\del\T| $$
 follows. Combining all three displayed formulas in this paragraph yields the claim, which in particular implies Assertion (\ref{asc}). 
\end{proof}

Now we observe that the size of the crystal is comparable to the size of the box, where we can understand each size in two different senses. 

\begin{lem} \label{lem:estTPBoxNd}
 It is true that
 $$ |\T| \aasymp |\P\setminus\Pe| \aasymp N^d \aasymp \la(\I) \,.$$
 In particular, there are constants $\cxvi$, $\cxc$, $\cxiv$, $\cxi$, $\cxv$, $\cxiii>0$ such that for all $N\in\N$ and $\om\in\Om$
 \begin{enumerate}[\hspace{1.5em}\upshape(a)\hspace{0.5em}]
  \setcounter{enumi}{\value{enumisave}}
  \item \label{asd} $\cxvi\,N^d \lle |\P\setminus\Pe| \lle \cxc\,N^d$,
  \item \label{ase} $|\T|\lle \cxiv\,N^d$,
  \item \label{asf} $|\T|\gge \cxi\,\la(\I)\,$ and $\;\la(\I)\gge \cxv\,|\T|$,
  \item \label{asg} $\la(\I)=\cxiii N^d$.  
 \end{enumerate} 
\end{lem}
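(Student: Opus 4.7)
The plan is to prove the four inequalities in essentially the order (\ref{asg}), (\ref{ase}), (\ref{asf}), (\ref{asd}), using only the hard-core constraint $\la(\t)\ge\la(\st^{\typts})$ built into $\Uept$, the definition $\Om=\{|\T|\ge \co N^d\}$, the bounded combinatorics of $\M$, and double counting of incidences between tiles and their vertices.

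First, the torus $\I$ is $N L[0,1)^d$ up to identification, hence $\la(\I)=N^d|\det L|=\cxiii N^d$, which gives (\ref{asg}). For (\ref{ase}), since $\T$ is locally $\M$-like, distinct tiles in $\T$ do not overlap except on faces, so $\sum_{\t\in\T}\la(\t)\le\la(\I)$. Combined with the hard-core bound $\la(\t)\ge\la(\st^{\typts})\ge\min_{i\in I}\la(\st^i)>0$, this yields $|\T|\le\la(\I)/\min_i\la(\st^i)=\cxiv N^d$. For (\ref{asf}), the upper bound $\la(\I)\ge\cxv|\T|$ is immediate from (\ref{ase}) and (\ref{asg}), while the lower bound $|\T|\ge\cxi\la(\I)$ combines $|\T|\ge \co N^d$ (the defining condition of $\Om$) with $\la(\I)=\cxiii N^d$, so $\cxi=\co/\cxiii$ works.

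For (\ref{asd}), recall $\P\setminus\Pe=\V(\T)$ by the definition of $\Pe$. The upper bound is straightforward double counting: every point in $\V(\T)$ is a vertex of at least one tile, and every tile $\t\in\T^i$ has exactly $n_i$ vertices, so
$$ |\P\setminus\Pe| \;\le\; \sum_{\t\in\T} n_{\typ{\t}} \;\le\; (\max_i n_i)\,|\T|\;\stackrel{\text{(\ref{ase})}}{\le}\; (\max_i n_i)\cxiv\, N^d \;=:\;\cxc N^d. $$
For the lower bound, count the same incidences the other way: each vertex $x\in\V(\T)$ is a corner of at most $M:=\max_{j\in J}\sum_{i\in I}f_{ij}$ tiles in $\T$. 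This bound holds because the local picture around $x$ is, by the locally $\M$-like property of $\T$, a subset of the tiles incident to a vertex of some type $j$ in $\M$; boundary vertices have strictly fewer adjacent tiles, not more. Hence
$$ (\min_i n_i)\,|\T|\;\le\;\sum_{\t\in\T} n_{\typ{\t}}\;=\;\sum_{x\in\V(\T)}|\{\t\in\T:x\in\t\}|\;\le\; M\,|\V(\T)|, $$
and combining with $|\T|\ge\co N^d$ gives $|\P\setminus\Pe|=|\V(\T)|\ge\cxvi N^d$ with $\cxvi=(\min_i n_i)\co/M$. The four inequalities together prove the asserted $\asymp N^d\asymp\la(\I)$.

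The only step requiring genuine care is the uniform bound $|\{\t\in\T:x\in\t\}|\le M$; this is where we must invoke the locally $\M$-like structure of $\T$, together with the minimum-defect-size condition (distance $>3\rho$ between non-adjacent tiles, and the slit condition (\ref{en:slit})), to rule out pathological configurations in which many perturbed tiles would accumulate at a single vertex. Everything else is double counting or the direct use of already-established constants.
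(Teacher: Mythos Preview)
Your proof is correct and, for assertions (\ref{asg}), (\ref{ase}), (\ref{asf}) and the upper bound in (\ref{asd}), essentially identical to the paper's. The one genuine difference is the lower bound in (\ref{asd}): the paper obtains $|\P\setminus\Pe|\ge\sum_{i\in I}\g_i|\T^i|\ge\min_i\g_i\,|\T|$ by invoking Lemma~\ref{lem:estBoundary}(\ref{asa}), thereby recycling the constants $\g_i$ already computed there, whereas you give a direct double-counting argument bounding the tile-degree of each vertex by $M=\max_{j\in J}\sum_{i\in I}f_{ij}$. Your route is more self-contained for this lemma in isolation; the paper's route simply piggybacks on machinery it needed anyway for the boundary estimates. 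Both ultimately rest on the same combinatorial fact---the locally $\M$-like structure caps the number of tiles meeting at a vertex---so the difference is packaging rather than substance.
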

\begin{proof}
 Since $\I$ consists of $N^d$ copies of the box $B_0$, Assertion (\ref{asg}) follows with $\cxiii=\la(B_0)>0$.
 
 Now note that $\co N^d\le|\T|$ holds by the definition of $\Om$. Moreover,  $\la(\t)\ge\la(\st^\typts)$ for all $\t\in\T$ implies
 $$ \min_{i\in I}\la(\st^i)\,|\T| \lle \sum_{\t\in\T}\la(\t) \lle \la(\I) \,.$$
 Thus we have shown that $|\T| \asymp  N^d \asymp \la(\I)$ as well as Assertions (\ref{ase}) and (\ref{asf}).
 
 Finally, Lemma~\ref{lem:estBoundary} implies  $ |\P\setminus\Pe| \ge \sum_{i\in I}\g_i|\T^i| \ge \min_{i\in I} \g_i\,|\T|$. The other direction follows from the two facts that each point in $\P\setminus\Pe$ is a corner of a tile of $\T$ and that the number of vertices per tile is bounded. This also yields Assertion (\ref{asd}).
\end{proof}

\subsubsection{Estimates for the Hamiltonian} \label{sec:estimateH}

The goal of this subsection is to prove the following estimate for the Hamiltonian, which is an analogue to \cite[Lemma~3.2]{hmr13}.
Thereto we define
\begin{equation} \label{eq:defm0}
 m_0:= \max_{i\in I}\big\{\big(\hloc^i(\st^i)-(\cii-|\cii|)\la(\st^i)\big)/\g^i\big\}\,,
\end{equation}
where the constants $\g_i>0$ depend only on the tessellation and are specified in \eqref{eq:defgi} above.

\begin{lem} \label{lem:H-estimate}
 There exist $\cxxvii,\cxxviii>0$, $\cxxix\in\R$ and $\cxix>0$ such that for all $m\ge m_0$,  $N\in\N$ and  $\s\ge\s_0(N,m)=\cxxvii N^2 + \cxxviii m + \cxxix $ and for all $\om\in\Om$ there exists a random rotation $R=R(\om)\in\sod$ with
 $$ H_\sn(\om) - H_\sn(\sta) \gge \cxix\, \|V-R\|^2_{L^2(\I)} \,.$$
\end{lem}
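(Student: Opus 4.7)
The plan is to combine the rigidity estimate on the torus (Corollary~\ref{cor:rigidity}) applied to the extended deformation $V$ with the local lower bound \eqref{eq:conHloc}, and to absorb the resulting boundary-error terms into the surface energy $\s S$ and the chemical-potential gain; the rotation $R(\om)\in\sod$ will be the one furnished by the rigidity estimate.

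First I would apply \eqref{eq:conHloc} tile by tile and subtract $H_\sn(\sta)$, using that $\sta$ satisfies $S=0$, $\UT=\I$ and $|\Q|=\sum_{i\in I}\g_i|\U^i|$ (the latter from Lemma~\ref{lem:estBoundary}(a) applied to $\sta$, for which $\del\Q=\emptyset$). Writing $\Lambda:=\sum_{\t\in\T}(\la(\t)-\la(\st^{\typts}))\ge0$ and $|\P|-|\Q|=P_\P+\sum_i\g_i(|\T^i|-|\U^i|)$ with $P_\P:=|\P|-\sum_i\g_i|\T^i|\ge0$, this yields
$$ H_\sn(\om)-H_\sn(\sta)\,\ge\,\ci\|\dist(V,\sod)\|^2_{L^2(\UT)}+\cii\Lambda+\sum_i(|\T^i|-|\U^i|)(\hloc^i(\st^i)-m\g_i)+\s S-mP_\P. $$
The $\cii\Lambda$ term needs a case split: if $\cii\ge0$, I drop it (since $\Lambda\ge0$); if $\cii<0$, I use the identity $\Lambda=-\sum_i(|\T^i|-|\U^i|)\la(\st^i)-\la((\UT)^c)$, which shifts the coefficient of $(|\T^i|-|\U^i|)$ to $\hloc^i(\st^i)-m\g_i-\cii\la(\st^i)$ and leaves the favourably signed remainder $-\cii\la((\UT)^c)\ge0$ to be dropped. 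The definition of $m_0=\max_i(\hloc^i(\st^i)-(\cii-|\cii|)\la(\st^i))/\g_i$ is crafted precisely so that these (possibly shifted) coefficients are nonpositive for $m\ge m_0$ in both cases, and the bound $|\T^i|-|\U^i|\le\cvi|\del\T|$ from Lemma~\ref{lem:estTUdelT} then gives
$$ H_\sn(\om)-H_\sn(\sta)\,\ge\,\ci\|\dist(V,\sod)\|^2_{L^2(\UT)}-\cvi(mG+K)|\del\T|+\s S-mP_\P $$
with $G:=\sum_i\g_i$ and some constant $K$ depending only on the model.

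Next I would apply Corollary~\ref{cor:rigidity} with $p=2$; by Lemma~\ref{lem:scale} the constant $\Ceins$ is independent of $N$ while $\Czwei(\I,2)=N\,\Czwei(B_0,2)$, which introduces the decisive $N^2$-factor after squaring via $(a+b)^2\le2(a^2+b^2)$:
$$ \|V-R\|^2_{L^2(\I)}\,\le\,2\Ceins^2\|\dist(V,\sod)\|^2_{L^2(\I)}+2N^2\Czwei(B_0,2)^2\|dV\|^2_{L^2(\I)}. $$
By Lemma~\ref{lem:interpolate}, both $\dist(V,\sod)^2$ and $|dV|^2$ are uniformly bounded on $\I$, while $dV$ vanishes outside the tube $\del^{\overline{0\rho}}\UT$ and $\dist(V,\sod)$ vanishes on $(\UT)^c\cap(\del^{\overline{0\rho}}\UT)^c$; together with $\la(\del^{\overline{0\rho}}\UT)\le\cx|\del\T|$ from Lemma~\ref{lem:estBoundary}(c) this produces
$$ \|V-R\|^2_{L^2(\I)}\,\le\,2\Ceins^2\|\dist(V,\sod)\|^2_{L^2(\UT)}+(C_5+C_6N^2)|\del\T| $$
for constants $C_5,C_6$ depending only on the model.

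Setting $\cxix:=\ci/(2\Ceins^2)$ cancels the $\|\dist(V,\sod)\|^2_{L^2(\UT)}$ terms between the two displayed bounds, reducing the sought inequality $H_\sn(\om)-H_\sn(\sta)\ge\cxix\|V-R\|_{L^2(\I)}^2$ to
$$ \s S\,\ge\,mP_\P+\bigl(\cxix(C_5+C_6N^2)+\cvi(mG+K)\bigr)|\del\T|. $$
Since Lemma~\ref{lem:estBoundary}(a),(b) give $|\del\P|\ge P_\P$ and $|\del\P|\ge|\del\T|/\cix$, splitting $\s S\ge\s\ciii|\del\P|$ into two equal halves and applying each inequality to one of the two right-hand summands verifies the estimate whenever $\s\ge\cxxvii N^2+\cxxviii m+\cxxix$ for suitable positive $\cxxvii,\cxxviii$ and $\cxxix\in\R$. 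The main obstacle is the case split on the sign of $\cii$: a naive upper bound $\Lambda\le C\la(\I)=O(N^d)$ would force $\s_0$ to scale like $N^d$ rather than $N^2$, and the specific form of $m_0$ is what allows both regimes to be handled uniformly while keeping the $N$-dependence quadratic.
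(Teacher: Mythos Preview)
Your proof is correct and follows essentially the same route as the paper, which organises the argument into three auxiliary lemmas (Lemma~\ref{lem:dist-estimate}, Lemma~\ref{lem:la-estimate}, Lemma~\ref{lem:s0-estimate}) built from the same ingredients: the tile-wise bound \eqref{eq:conHloc}, the rigidity estimate on $\I$ with $p=2$ and its scaling from Lemma~\ref{lem:scale}, the pointwise bounds of Lemma~\ref{lem:interpolate}, and the cardinality estimates of Lemmas~\ref{lem:estTUdelT} and \ref{lem:estBoundary} to absorb all boundary errors into $\s S$. Your use of the exact identity $\Lambda=-\sum_i(|\T^i|-|\U^i|)\la(\st^i)-\la((\UT)^c)$ is in fact slightly sharper than the paper's inequality \eqref{eq:DiffVol}, and your bookkeeping via $|\del\T|$ rather than $|\P|-\sum_i\g_i|\T^i|$ is an equivalent organisational choice; neither changes the structure of the argument or the resulting $\s_0(N,m)\asymp N^2+m$.
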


We partition the proof of Lemma~\ref{lem:H-estimate} into several lemmas. For better readability and shorter formulas, we omit the indexes $\sn$ of $H_\sn$ sometimes in the proofs, but not in the statements of the lemmas.

\begin{lem} \label{lem:dist-estimate}
 There exist constants $\cxvii>0$ and $\cxviii\in\R$ such that for all $m\ge m_0,\s>0, N\in\N$ and $\om\in\Om$ it is true that
 $$ H_\sn(\om)-H_\sn(\sta) 
  \gge
  \ci \|\dist(V,\sod)\|^2_{L^2(\UT)} + \big(\s\ciii-\cxvii m- \cxviii\big)\big(|\P|-\sumigt\big) \,.$$
\end{lem}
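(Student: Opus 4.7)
The plan is to expand $H_\sn(\om)-H_\sn(\sta)$ into four pieces — the local Hamiltonian difference, the counting/chemical-potential difference, the surface term, and a residual $\cii$-dependent area term — and bundle everything in terms of the single non-negative quantity $Y:=|\P|-\sum_{i\in I}\g_i|\T^i|$. Two structural observations make this possible: first, the standard configuration has empty boundary ($\del\Q=\emptyset$), so by \eqref{eq:conS} $S(\sta)=0$ and by Lemma~\ref{lem:estBoundary}\ref{asa} applied to $\sta$ (where the inequality is forced to be an equality) $|\Q|=\sum_{i\in I}\g_i|\U^i|$; second, by \eqref{eq:conS} and Lemma~\ref{lem:estBoundary}\ref{asa} the surface contribution is bounded below by $\s S\ge\s\ciii|\del\P|\ge\s\ciii Y$.

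Applying \eqref{eq:conHloc} tile by tile and summing yields
$$\sum_{\t\in\T}\hloc^{\typts}(\t)\ge \sum_i|\T^i|\hloc^i(\st^i)+\ci\|\dist(V,\sod)\|^2_{L^2(\UT)}+\cii\bigl[\la(\UT)-\textstyle\sum_i|\T^i|\la(\st^i)\bigr].$$
Using $|\Q|=\sum_i\g_i|\U^i|$ and writing $|\P|=Y+\sum_i\g_i|\T^i|$, the chemical-potential/constant part collapses to
$$\sum_i(|\T^i|-|\U^i|)\bigl(\hloc^i(\st^i)-m\g^i\bigr)-mY.$$
The main obstacle is the $\cii$-term, whose sign is not fixed. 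Since $0\le\la(\UT)-\sum_i|\T^i|\la(\st^i)\le\la(\I)-\sum_i|\T^i|\la(\st^i)=\sum_i(|\U^i|-|\T^i|)\la(\st^i)$, I bound it below by $0$ when $\cii\ge 0$ and by $|\cii|\sum_i(|\T^i|-|\U^i|)\la(\st^i)$ when $\cii<0$. In both cases the combined type-by-type coefficient becomes $\hloc^i(\st^i)+\de\la(\st^i)-m\g^i$ with $\de\in\{0,|\cii|\}$; the definition \eqref{eq:defm0} of $m_0$ — precisely because it carries the factor $|\cii|-\cii$, which equals $0$ in the first case and $2|\cii|$ in the second — forces this bracket to be non-positive for every $i$ whenever $m\ge m_0$.

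With the brackets non-positive I invoke Lemma~\ref{lem:estTUdelT}: $|\T^i|-|\U^i|\le\cvi|\del\T|$. A short case check (separating $|\T^i|\ge|\U^i|$ from $|\T^i|<|\U^i|$) gives $(|\T^i|-|\U^i|)\cdot(\text{non-positive bracket})\ge\cvi|\del\T|\cdot(\text{non-positive bracket})$, so
$$\sum_i(|\T^i|-|\U^i|)\bigl(\hloc^i(\st^i)+\de\la(\st^i)-m\g^i\bigr)\ge \cvi|\del\T|\cdot\Bigl[A^\ast-m\textstyle\sum_i\g^i\Bigr],$$
where $A^\ast:=\sum_i(\hloc^i(\st^i)+\de\la(\st^i))$. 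A weighted-average argument shows $A^\ast/\sum_i\g^i\le m_0\le m$, so the right-hand bracket is again non-positive; applying Lemma~\ref{lem:estBoundary}\ref{asb} ($|\del\T|\le\cix Y$) and reversing once more yields a lower bound of the form $-\cvi\cix(m\sum_i\g^i-A^\ast)Y$.

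Collecting the three lower bounds — $\ci\|\dist(V,\sod)\|^2_{L^2(\UT)}$ from the local Hamiltonian, $\s\ciii Y$ from the surface term, $-mY-\cvi\cix(m\sum_i\g^i-A^\ast)Y$ from the counting term — gives
$$H_\sn(\om)-H_\sn(\sta)\ge \ci\|\dist(V,\sod)\|^2_{L^2(\UT)}+\bigl(\s\ciii-\cxvii m-\cxviii\bigr)Y$$
with $\cxvii:=1+\cvi\cix\sum_i\g^i$ and $\cxviii:=-\cvi\cix A^\ast$, as required.
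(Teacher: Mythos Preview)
Your argument is correct and follows essentially the same route as the paper's proof: expand $H_\sn(\om)-H_\sn(\sta)$, apply \eqref{eq:conHloc} tile by tile, use $|\Q|=\sum_i\g_i|\U^i|$ and $\s S\ge\s\ciii Y$, handle the $\cii$-term via $0\le\la(\UT)-\sum_i|\T^i|\la(\st^i)\le\sum_i(|\U^i|-|\T^i|)\la(\st^i)$, then absorb the resulting $\sum_i(|\T^i|-|\U^i|)\cdot(\text{non-positive bracket})$ into a multiple of $Y$ via Lemma~\ref{lem:estTUdelT} and Lemma~\ref{lem:estBoundary}\ref{asb}; the constants you obtain match the paper's $\cxvii=1+\cvi\cix\sum_i\g_i$ exactly.

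One small remark: your bound on the $\cii$-term is actually sharper than the paper's (you get coefficient $\de=|\cii|$ rather than $2|\cii|$ when $\cii<0$, because you use $\la(\UT)\le\la(\I)$ directly instead of the paper's $\la(\UT)+\sum_i|\T^i|\la(\st^i)\le 2\la(\I)$). This is why your bracket $\hloc^i(\st^i)+\de\la(\st^i)-m\g^i$ does not literally match the numerator in \eqref{eq:defm0}; but since $\de\le|\cii|-\cii$, the condition $m\ge m_0$ is more than enough, as you correctly note. Also, your separate ``weighted-average'' step to check $A^\ast-m\sum_i\g^i\le0$ is redundant: it is simply $\sum_i B_i$ with each $B_i\le0$.
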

\begin{rem*}
 Lemma~\ref{lem:dist-estimate} and Lemma~\ref{lem:estBoundary}(\ref{asa}) imply $H_\sn\ge H_\sn(\sta)+\al|\P|$ with $\al=\min\{\s\ciii-\cxvii m- \cxviii,0\}\le0$. Therefore
 $$ Z_\bsn \lle e^{-\be H_\sn(\sta)}\int_\Om e^{-\be\al|\P|}d\mu \,<\, \infty $$
 since the exponential moment of the Poisson distributed random variable $|\P|$ exists. The conclusion also holds for $m<m_0$ as $H_\sn=H_{\s,m_0,N}-(m-m_0)|\P|$.
\end{rem*}
\begin{proof}
 Using first the definition (\ref{eq:defH}) of $H_\sn$, second the assumption (\ref{eq:conHloc}) on the local Hamiltonians $\hloc^i$ and assumption (\ref{eq:conS}) on the quantity $S$ (note $\del\Q=\emptyset$) and finally Lemma~\ref{lem:estBoundary}(\ref{asa}), we estimate
 \begin{eqnarray}
  \lefteqn{H_\sn(\om)-H_\sn(\sta) \,=\,}\quad  \notag\\
  &=&
  \sum_{\t\in\T}\big(\hloc^{\typts}(\t)-\hloc^{\typts}(\st^\typts)\big) + \sum_{i\in I}|\T^i|\hloc^i(\st^i) - \sum_{i\in I}|\U^i|\hloc^i(\st^i) \notag\\
  && +\, \s S(\om) - \s S(\sta) -m|\P| + m|\Q| \notag\\
  &\ge &
  \sum_{\t\in\T}\Big(\ci\|\dist(\nabla v_{\scriptscriptstyle\t},\sod)\|^2_{L^2(\t)} + \cii\big(\la(\t)-\la(\st^{\typts})\big)\Big) + \sum_{i\in I}\big(|\T^i|-|\U^i|\big)\hloc^i(\st^i) \notag\\
  && +\,\s\ciii|\del\P|-m\big(|\P|-|\Q|\big) \notag\\
  &\ge& 
  \ci \|\dist(V,\sod)\|^2_{L^2(\UT)} + \cii\sum_{\t\in\T}\big(\la(\t)-\la(\st^{\typts})\big) + \sum_{i\in I}\big(|\T^i|-|\U^i|\big)\hloc^i(\st^i) \notag\\
  && +\,\s\ciii\big(|\P|-\sumigt\big) - m\big(|\P|-\sumigt\big) + m\big(|\Q|-\sumigt\big)\,. \label{eq:H-H1}
 \end{eqnarray}
 
 Now we bound the term $\cii\sum_{\t\in\T}\big(\la(\t)-\la(\st^{\typts})\big)$ from below using the fact $\la(\t)\ge\la(\st^\typts)$ for all $\t\in\T$. If $\cii\ge0$, we are done with bounding by $0$, but $\cii<0$ is also possible. The just mentioned fact implies
 $$\la(\UT)+\sum_{i\in I}|\T^i|\la(\st^i) \lle 2 \la(\UT) \lle 2\la(\I) \,=\, 2 \sum_{i\in I}|\U^i|\la(\st^i)\,.$$
 Subtracting $2\sum_{i\in I}|\T^i|\la(\st^i)$ from this inequality yields
 $$ \sum_{\t\in\T}\big(\la(\t)-\la(\st^{\typts})\big) \lle 2 \sum_{i\in I}\big(|\U^i|-|\T^i|\big)\la(\st^i)\,. $$
 Altogether, it follows that
 \begin{equation} \label{eq:DiffVol}
  \cii\sum_{\t\in\T}\big(\la(\t)-\la(\st^{\typts})\big) \gge (\cii\!-\!|\cii|)\sum_{i\in I}\big(|\U^i|-|\T^i|\big)\la(\st^i)
 \end{equation}
 since $\cii-|\cii|=-2|\cii|$ if $\cii<0$ and $\cii-|\cii|=0$ if $\cii\ge0$. 
 
 Moreover, Lemma~\ref{lem:estBoundary}(\ref{asa}) for $\sta$ yields $|\Q|=\sum_{i\in I}\g_i|\U^i|$ since $\del\Q=\emptyset$. Therefore
 \begin{equation} \label{eq:DiffQ}
  m\big(|\Q|-\sumigt\big) \,=\, m\sum_{i\in I}\g_i\big(|\U^i|-|\T^i|\big)\,.
 \end{equation} 
 Plugging (\ref{eq:DiffVol}) and (\ref{eq:DiffQ}) into (\ref{eq:H-H1}) yields
 \begin{eqnarray}
  H_\sn(\om)-H_\sn(\sta) 
  &\ge&
  \ci \|\dist(V,\sod)\|^2_{L^2(\UT)} + (\s\ciii-m)\big(|\P|-\sumigt\big) \notag\\
  && +\, \sum_{i\in I}\big(|\U^i|-|\T^i|\big)\big(m \g_i+(\cii\!-\!|\cii|)\la(\st^i) - \hloc^i(\st^i)\big) \label{eq:H-H2}
 \end{eqnarray}
 
 Since $m \g_i+(\cii\!-\!|\cii|)\la(\st^i) - \hloc^i(\st^i)\ge0$ for $m\ge m_0$ by the choice of $m_0$ in (\ref{eq:defm0}), we can first use Lemma~\ref{lem:estTUdelT} and then Lemma~\ref{lem:estBoundary}(\ref{asb}) and receive 
 \begin{eqnarray*} 
  \lefteqn{\sum_{i\in I}\big(|\U^i|-|\T^i|\big)\big(m \g_i+(\cii\!-\!|\cii|)\la(\st^i) - \hloc^i(\st^i)\big) \,\ge\,}\quad \\
  &\ge&
  -\cvi |\del\T|  \sum_{i\in I}\big(m \g_i+(\cii\!-\!|\cii|)\la(\st^i) - \hloc^i(\st^i)\big)
  \,=\,
  -\cvi |\del\T| (m\cxxxv +\cxxxvi) \\
  &\ge&
  -\cvi\cix\big(|\P|-\sumigt\big)(m\cxxxv +\cxxxvi)
 \end{eqnarray*}
 for constants $\cxxxv>0$ and $\cxxxvi\in\R$ with $m\cxxxv+\cxxxvi>0$ for $m\ge m_0$. 
 Inserting this into (\ref{eq:H-H2}) yields the claim, namely
 $$ H_\sn(\om)-H_\sn(\sta) 
  \ge
  \ci \|\dist(V,\sod)\|^2_{L^2(\UT)} + \big(\s\ciii-\cxvii m- \cxviii\big)\big(|\P|-\sumigt\big) $$
 with constants $\cxvii:=1+\cvi\cix\cxxxv>0$ and $\cxviii:=\cvi\cix\cxxxvi\in\R$.  
\end{proof}

\begin{lem} \label{lem:la-estimate}
 For all $p\in[2d/(2+d),2]$, there exist constants $\cxix>0$ and $\cxx(p)>0$ such that for all $N\in\N$ and $\om\in\Om$ there exists a random rotation $R=R(\om)\in\sod$ with
 $$ c_1\,\|\dist(V,\sod)\|^2_{L^2(\UT)}  \gge \cxix \,\|V-R\|^2_{L^2(\I)} \,-\, \cxx(p)\,N^{2+d-\frac{2d}{p}} \la\big(\del^{\overline{0\rho}}\UT\big)^{\frac2p}\,.$$
\end{lem}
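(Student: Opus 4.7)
The plan is to apply the torus rigidity estimate (Corollary~\ref{cor:rigidity}) directly to the extended function $V$ on the whole box $\I$, and then to use Lemma~\ref{lem:interpolate} to absorb the contributions of $\dist(V,\sod)$ and $dV$ on the tube neighbourhood $\del^{\overline{0\rho}}\UT$, which contains the entire support of $dV$ and the only part of $\dist(V,\sod)$ living outside the crystal.

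Concretely, viewing $\I$ as the $N$-fold dilate of the unit torus $[B_0]$ and invoking the scaling law of Lemma~\ref{lem:scale}, Corollary~\ref{cor:rigidity} would produce a rotation $R=R(\om)\in\sod$ satisfying
$$ \|V-R\|_{L^2(\I)}^2 \lle 2\Ceins^2\,\|\dist(V,\sod)\|_{L^2(\I)}^2 + 2\Czwei^2\, N^{d-\frac{2d}{p}+2}\,\|dV\|_{L^p(\I)}^2, $$
with $\Ceins,\Czwei$ depending only on $[B_0]$ and $p$. I would then split the two integrals according to the decomposition $\I=\UT\cup\del^{\overline{0\rho}}\UT\cup((\UT)^c\cap(\del^{\overline{0\rho}}\UT)^c)$. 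Outside the tube $V\equiv\wti R\in\sod$ is constant, so both integrands vanish; on $\UT$ the extension $V$ is locally a gradient, hence $dV=0$ there; on the tube $\dist(V,\sod)^2$ and $|dV|^2$ are uniformly bounded by $\cv$ and $\cv^2$ respectively by Lemma~\ref{lem:interpolate}. This produces
$$ \|\dist(V,\sod)\|_{L^2(\I)}^2 \lle \|\dist(V,\sod)\|_{L^2(\UT)}^2 + \cv\,\la\big(\del^{\overline{0\rho}}\UT\big) \qquad\text{and}\qquad \|dV\|_{L^p(\I)}^2 \lle \cv^2\, \la\big(\del^{\overline{0\rho}}\UT\big)^{2/p}. $$
Plugging these in, solving for $\|\dist(V,\sod)\|_{L^2(\UT)}^2$ and multiplying by $\ci$ yields the target inequality up to an additional linear error term of the form $\mathrm{const}\cdot\la(\del^{\overline{0\rho}}\UT)$.

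The only delicate point, and the main obstacle I anticipate, is absorbing this linear term into the claimed error $\cxx(p)\,N^{2+d-2d/p}\la(\del^{\overline{0\rho}}\UT)^{2/p}$. For $p<2$ the exponent $1-2/p$ is negative, so a naive H\"older-type bound fails without a positive lower bound on $\la(\del^{\overline{0\rho}}\UT)$. Fortunately, a suitable dichotomy is built into the model: Lemma~\ref{lem:estBoundary}(c) gives $\la(\del^{\overline{0\rho}}\UT)\aasymp|\del\T|$, and since $|\del\T|$ is a nonnegative integer, one has either $\la(\del^{\overline{0\rho}}\UT)=0$ (in which case the rigidity estimate already delivers the claim with zero error) or $\la(\del^{\overline{0\rho}}\UT)\ge c_{\min}>0$ for a constant depending only on the model. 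Combining this lower bound with $2+d-2d/p\ge0$ (which is precisely the assumption $p\ge 2d/(2+d)$) and $N\ge1$, one obtains $\la(\del^{\overline{0\rho}}\UT)\lle c_{\min}^{1-2/p}N^{2+d-2d/p}\,\la(\del^{\overline{0\rho}}\UT)^{2/p}$, closing the argument with $\cxix:=\ci/(2\Ceins^2)$ (independent of $p$) and $\cxx(p)$ equal to the sum of the two resulting contributions.
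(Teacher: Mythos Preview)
Your proposal is correct and follows essentially the same route as the paper's proof: apply Corollary~\ref{cor:rigidity} with the scaling of Lemma~\ref{lem:scale}, square, split the integrals over $\UT$, the tube, and its complement, use Lemma~\ref{lem:interpolate}, and then absorb the leftover linear term $\la(\del^{\overline{0\rho}}\UT)$ into $N^{2+d-2d/p}\la(\del^{\overline{0\rho}}\UT)^{2/p}$ via a dichotomy on the tube volume. The only cosmetic difference is in how you justify the dichotomy: the paper observes directly that the tube, if nonempty, has volume at least $\rho^d$ (so $\la\le\rho^{d-2d/p}\la^{2/p}$), whereas you route through $\la(\del^{\overline{0\rho}}\UT)\asymp|\del\T|$ from Lemma~\ref{lem:estBoundary} and the integrality of $|\del\T|$; both yield the same positive lower bound and the same conclusion.
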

\begin{proof}
 By Corollary~\ref{cor:rigidity} and Lemma~\ref{lem:scale}, there exists a random rotation $R=R(\om)\in\sod$ such that
 \begin{equation} \label{eq:V-Rledist}
  \|V-R\|_{L^2(\I)} \lle \Ceins \|\dist(V,\sod)\|_{L^2(\I)} + N^{\frac d2-\frac dp+1}\Czwei(p) \|dV\|_{L^p(\I)} 
 \end{equation}
 with scale-invariant constants $\Ceins=\Ceins(\Ieins)$ and $\Czwei(p)=\Czwei(\Ieins,p)$.
 
 Since $V=\wti{R}\in\sod$ on $(\UT)^c\cap(\del^{\overline{0\rho}}\UT)^c$ and Lemma~\ref{lem:interpolate}, it follows that
 \begin{eqnarray}
  \|\dist(V,\sod)\|^2_{L^2(\I)}
  &=&
  \|\dist(V,\sod)\|^2_{L^2(\UT)}+\|\dist(V,\sod)\|^2_{L^2(\del^{\overline{0\rho}}\UT)} \notag\\
  &\le&
  \|\dist(V,\sod)\|^2_{L^2(\UT)}+\cv\,\la(\del^{\overline{0\rho}}\UT) \label{eq:distvsod}
 \end{eqnarray}
 and, also using $dV=0$ on $\UT$,
 \begin{equation} \label{eq:dv}
  \|dV\|^2_{L^p(\I)} = \|dV\|^2_{L^p(\del^{\overline{0\rho}}\UT)}
 = \Big(\int_{\del^{\overline{0\rho}}\UT} |dV|^p \,d\la\Big)^{\frac2p}
 \lle \cva^2\la\big(\del^{\overline{0\rho}}\UT\big)^{\frac2p} \,.
 \end{equation}
 Using $\frac2p-1\ge0$ (since $p\le2$) at $*$ yields for all $y\ge0$:
 $$ 
 y=0 \vee y>\rho^d 
 \,\Leftrightarrow\,
 y=0 \vee \rho^{-d}y>1
 \,\stackrel{*}{\Leftrightarrow}\,
 y=0 \vee \rho^{d-\frac{2d}{p}}y^{\frac2p-1}>1
 \,\Leftrightarrow\,
 \rho^{d-\frac{2d}{p}}y^{\frac2p}\ge y\,.
 $$
 With $y=\la(\del^{\overline{0\rho}}\UT)$ (note $y\ge\rho^d$ if $y\ne0$) it follows that
 \begin{equation} \label{eq:lagelap}
  \la(\del^{\overline{0\rho}}\UT) \lle \rho^{d-\frac{2d}{p}}\la\big(\del^{\overline{0\rho}}\UT\big)^{\frac{2}{p}}\,.
 \end{equation}
 Inserting the combination of (\ref{eq:distvsod}) and (\ref{eq:lagelap}) as well as (\ref{eq:dv}) 
 into the squared version of (\ref{eq:V-Rledist}) yields
 \begin{equation*} 
  \|V-R\|^2_{L^2(\I)} \lle
  2\Ceins^2 \|\dist(V,\sod)\|^2_{L^2(\UT)} + 2N^{d-\frac{2d}{p}+2} \cxxxvii(p) \la\big(\del^{\overline{0\rho}}\UT\big)^{\frac{2}{p}}
 \end{equation*} 
for some constant $\cxxxvii(p)>0$. Thus the lemma follows by a little rearrangement and renaming of constants.
\end{proof}

\begin{lem} \label{lem:s0-estimate}
 For all $m\ge m_0$, $\s>0$, $N\in\N$ and $\om\in\Om$ there exists a random rotation $R=R(\om)\in\sod$ such that
 \begin{eqnarray*}
  H_\sn(\om) - H_\sn(\sta) &\ge&
  \cxix\, \|V-R\|^2_{L^2(\I)}+ \ciii\big(\s-\s_0(N,m)\big)\big(|\P|-\sumigt\big)
 \end{eqnarray*}
 with $ \s_0(N,m) = \cxxvii N^2 + \cxxviii m + \cxxix$ for some constants $\cxxvii,\cxxviii>0$ and $\cxxix\in\R$.
\end{lem}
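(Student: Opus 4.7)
The plan is to combine the two previous lemmas together with the geometric bounds from Lemma~\ref{lem:estBoundary}, specialising Lemma~\ref{lem:la-estimate} at $p=2$.

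First I would invoke Lemma~\ref{lem:dist-estimate}, which already isolates the desired penalty factor $(\s\ciii-\cxvii m-\cxviii)\big(|\P|-\sumigt\big)$ and leaves the non-negative term $\ci\,\|\dist(V,\sod)\|^2_{L^2(\UT)}$. Then I would apply Lemma~\ref{lem:la-estimate} to that term, which produces $\cxix\|V-R\|^2_{L^2(\I)}$ for some random rotation $R=R(\om)\in\sod$, together with a correction term of the form $-\cxx(p)\,N^{2+d-2d/p}\,\la(\del^{\overline{0\rho}}\UT)^{2/p}$.

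The choice $p=2$ is the crucial tactical step and is what makes the argument work. It collapses the exponent $2+d-2d/p$ to exactly $2$, producing the $N^2$ scaling required in $\s_0$, and it linearises $\la(\del^{\overline{0\rho}}\UT)^{2/p}$ into plain $\la(\del^{\overline{0\rho}}\UT)$. Lemma~\ref{lem:estBoundary}(\ref{asc}) followed by (\ref{asb}) then converts this correction into a multiple of $|\P|-\sumigt$, via $\la(\del^{\overline{0\rho}}\UT)\lle\cx|\del\T|\lle\cx\cix\big(|\P|-\sumigt\big)$, so that it can be merged with the penalty term rather than being absorbed into $\|V-R\|^2_{L^2(\I)}$.

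Putting the three estimates together yields
$$ H_\sn(\om)-H_\sn(\sta) \gge \cxix\,\|V-R\|^2_{L^2(\I)} + \big(\s\ciii-\cxx(2)\cx\cix\,N^2 - \cxvii m - \cxviii\big)\big(|\P|-\sumigt\big), $$
and factoring $\ciii>0$ out of the second parenthesis rewrites this in the desired form with $\cxxvii := \cxx(2)\cx\cix/\ciii$, $\cxxviii := \cxvii/\ciii$ and $\cxxix := \cxviii/\ciii$. No genuine obstacle is anticipated: once $p=2$ is fixed, everything assembles by routine rearrangement, and the sign requirements $\cxxvii,\cxxviii>0$ and $\cxxix\in\R$ are automatic from the signs of $\ciii,\cxvii,\cx,\cix,\cxx(2)>0$ and $\cxviii\in\R$ provided by the input lemmas.
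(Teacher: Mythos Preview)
Your proposal is correct and follows essentially the same route as the paper: combine Lemma~\ref{lem:dist-estimate} with Lemma~\ref{lem:la-estimate}, specialise to $p=2$, and absorb the $\la(\del^{\overline{0\rho}}\UT)$ correction into the penalty term via Lemma~\ref{lem:estBoundary}(\ref{asc}) and (\ref{asb}), arriving at exactly the same constants $\cxxvii=\cxx(2)\cx\cix/\ciii$, $\cxxviii=\cxvii/\ciii$, $\cxxix=\cxviii/\ciii$. The only cosmetic difference is that the paper first carries out the estimate for general $p$ (using also $|\del\T|\le\cxiv N^d$ for the extra factor $|\del\T|^{2/p-1}$) before observing that the outcome is $N^2$ regardless and then setting $p=2$, whereas you fix $p=2$ at the outset and thereby bypass that extra step.
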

\begin{proof}
 Lemma~\ref{lem:dist-estimate} and Lemma~\ref{lem:la-estimate} together state that
 $$ H(\om)-H(\sta)  \gge
   \cxix \|V-R\|^2_{L^2(\I)} + \big(\s\ciii-\cxvii m- \cxviii\big)\!\big(|\P|-\sumigt\big) 
  -\cxx(p)N^{2+d-\frac{2d}{p}}\la\big(\del^{\overline{0\rho}}\UT\big)^{\frac2p}$$
 for all $p\in[2d/(2+d),2]$. Therefore we have to estimate $\la(\del^{\overline{0\rho}}\UT)^{\frac2p}$ from above. We start the estimate with Lemma~\ref{lem:estBoundary}(\ref{asc}) to get a bound in terms of $|\del\T|$. Then we use two different bounds: On the one hand we use Lemma~\ref{lem:estBoundary}(\ref{asb}), i.e.\ $|\del\T|\le\cix(|\P|-\sum_{i\in I}\g_i|\T^i|)$, and on the other hand we use the bound $|\del\T|\le|\T|\le\cxiv N^d$, provided by Lemma~\ref{lem:estTPBoxNd}(\ref{ase}). This yields
 $$ \la\big(\del^{\overline{0\rho}}\UT\big)^{\frac2p}
    \lle \big(\cx |\del\T|\big)^{1+(\frac2p-1)}
    \lle  \cx^{\frac2p}\, \cix \big(|\P|-\sumigt\big)\big(\cxiv N^d\big)^{\frac2p-1}
    $$ 
 Since $N^{2+d-\frac{2d}{p}}N^{\frac{2d}{p}-d}=N^2$ for all $p$, the choice of $p$ does not matter. We choose $p=2$ (i.e.\ $\frac{2}{p}=1$). Setting $\cxxvii:=\cxx(2)\cx\cix/\ciii$, we conclude
 $$ H(\om)-H(\sta)  \gge  \cxix \|V-R\|^2_{L^2(\I)} + \big(\s\ciii-\cxvii m- \cxviii-\cxxvii\ciii N^2\big)\big(|\P|-\sumigt\big)\,,  $$
 which implies the lemma with $\cxxviii:=\cxvii/\ciii>0$ and $\cxxix:=\cxviii/\ciii\in\R$.
\end{proof}

Let us remark that a choice $p>2$ would give a worse result. Though Lemma~\ref{lem:la-estimate} would also work with an additional $\la(\del^{\overline{0\rho}}\UT)^1$-term, the factor $N^{2+d-\frac{2d}{p}}$ would be worse than $N^2$ and could not be compensated by $|\del\T|^{\frac2p-1}$ since $|\del\T|$ may be small.

\begin{proof}[Proof of Lemma \ref{lem:H-estimate}]
 This lemma is an immediate corollary to Lemma~\ref{lem:s0-estimate} since $\s-\s_0(N,m)\ge0$ and $|\P|-\sum_{i\in I}\g_i|\T^i|\ge0$ by Lemma~\ref{lem:estBoundary}(\ref{asa}). 
\end{proof}

\subsubsection{A Lower Bound for the Partition Sum} \label{sec:partitionsum}

In this section we prove the following lower bound of the partition sum, which is an analogue to \cite[Lemma~3.1]{hmr13}.
\begin{lem} \label{lem:partitionsum}
 For all $\g>0$ and $m\in\R$ there exist a constant $\cxxii(\g,m)>0$ and an $N_0(\g,m)\in\N$ such that for all $N\ge N_0(\g,m)$, $\be>0$ and $\s>0$ one has
 $$ Z_\bsn \gge e^{-N^d[\be\g+\cxxii(\g,m)]}e^{-\be H_\sn(\sta)}\,.$$
\end{lem}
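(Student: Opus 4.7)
The plan is to lower bound $Z_\bsn$ by restricting the Poisson integration to a carefully chosen set $A_\delta\subset\Om$ of configurations close to the standard configuration $\sta$, on which $H_\sn$ stays close to $H_\sn(\sta)$ \emph{uniformly in} $\s>0$. Since the bound must hold for all $\s$, the set $A_\delta$ must consist of configurations with $S(\om)=0$, which by the second clause of \eqref{eq:conS} means $\del\P=\emptyset$: the whole torus $\I$ is tiled by $\T(\om)$ and every Poisson point is a vertex of some tile.

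Let $\Q=\{y_1,\ldots,y_n\}$, with $n=|\Q|\asymp N^d$ by Lemma~\ref{lem:estTPBoxNd}, denote the vertices of $\sta$. For a radius $\delta>0$ to be chosen I consider the event
\[
A_\delta := \bigl\{\om\in\Om : |\P|=n,\ \text{there is a bijection }\pi\text{ with }X_i\in D_{\pi(i)}\ \forall i\bigr\},
\]
where $D_j\subseteq B_\delta(y_j)$ is a set of admissible displacements of the $j$-th point, chosen so that every $\om\in A_\delta$ yields a tessellation $\T(\om)$ combinatorially identical to $\U$, satisfies the enlargement condition $\la(\t)\ge\la(\st^i)$ on every tile, and has $\del\P=\emptyset$. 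For $\delta=\delta(\g)$ small enough, continuity of each $\hloc^i$ on the compact set $\Uept$ gives
\[
H_\sn(\om)-H_\sn(\sta) \;=\; \sum_{\t\in\T(\om)}\bigl(\hloc^{\typ{\t}}(\t)-\hloc^{\typ{\t}}(\st^{\typ{\t}})\bigr) \;\le\; \g N^d,
\]
uniformly in $\s,m,\be$, where the $-m|\P|$ and $\s S$ contributions cancel identically between $\om$ and $\sta$ thanks to $|\P|=|\Q|$ and $S(\om)=S(\sta)=0$. Writing out the explicit Poisson density on $\I$,
\[
\mu(A_\delta) \;\ge\; e^{-\la(\I)}\prod_{j=1}^n\la(D_j) \;\ge\; e^{-\cxxii(\g,m)N^d},
\]
provided $\la(D_j)\ge c\,\delta^d$ for a uniform $c>0$, with $\cxxii(\g,m)$ absorbing the $O(1)$ quantities $\cxiii=\la(\I)/N^d$ and $(n/N^d)\log\bigl(1/(c\delta^d)\bigr)$, both bounded by Lemma~\ref{lem:estTPBoxNd}. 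Combining, and taking $N\ge N_0(\g,m)$ large enough that $\sta\in\Om$ and $A_\delta\subseteq\Om$,
\[
Z_\bsn \;\ge\; \int_{A_\delta}e^{-\be H_\sn}\,d\mu \;\ge\; e^{-\be(H_\sn(\sta)+\g N^d)}\,\mu(A_\delta) \;\ge\; e^{-\be H_\sn(\sta)}\,e^{-N^d[\be\g+\cxxii(\g,m)]},
\]
as required.

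The main technical hurdle is verifying that the admissible displacement sets $D_j$ have $d$-dimensional Lebesgue measure $\ge c\,\delta^d$. The identity $\sum_{\st\in\U}\la(\st)=\la(\I)$ combined with $\la(\t)\ge\la(\st^i)$ would naively force every admissible perturbation to preserve all tile volumes exactly; on the torus such perturbations form a rigid set (essentially translations), apparently giving each $D_j$ only lower dimension. Overcoming this requires exploiting the flexibility in the definition of $\T$ as the \emph{largest} admissible subset of $\hat{\T}_{\text{psbl}}$: when a small perturbation would shrink some tile below $\la(\st^i)$, a local reconfiguration of the tile structure (such as an edge-flip in a triangulation, or a local type-swap when several tile types are present) can restore the enlargement condition and preserve $\del\P=\emptyset$, so that $D_j$ can be taken as a full-dimensional subset of $B_\delta(y_j)$. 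Executing this construction while also maintaining the connectedness (ii) and the separation conditions (iii)--(iv) in the definition of $\T$ is where the main work of the proof lies.
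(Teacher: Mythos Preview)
You have correctly identified the central obstruction, but your proposed way around it cannot work, and this is exactly where the paper's proof takes a different turn.

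Your set $A_\delta$ is built so that $|\P|=|\Q|$ and $\del\P=\emptyset$. The second condition forces $\UT=\I$, hence $\sum_{\t\in\T}\la(\t)=\la(\I)$. If the combinatorics of $\T$ agrees with that of $\U$ (or is obtained from it by edge-flips or type-swaps that preserve $\sum_i|\T^i|\la(\st^i)$, which they do in any locally $\M$-like tiling of the full torus with the same number of vertices), then $\sum_i|\T^i|\la(\st^i)=\la(\I)$ as well. Combined with the hard-core constraint $\la(\t)\ge\la(\st^{\typt})$ for every tile, this forces $\la(\t)=\la(\st^{\typt})$ for \emph{every} $\t$. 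Edge-flips cannot help: they redistribute volume among tiles but do not change the total, so the equality is still forced tile by tile. The set of point configurations satisfying all these equalities is a lower-dimensional subvariety of $\I^n$ (near $\sta$ it contains little more than translations), so any product-type set $\prod_j D_j$ with $\la(D_j)\ge c\delta^d$ will contain configurations violating the volume constraint, and the admissible configurations in $A_\delta$ have $\mu$-measure zero.

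The paper resolves this not by reconfiguring tiles but by \emph{relaxing} the requirement $|\P|=|\Q|$. It introduces an auxiliary configuration $\sta_r$ obtained by stretching the standard tessellation by a factor $1+r$ (with a thin ``off-cut'' strip to fit the torus exactly). Every tile of $\sta_r$ has volume strictly larger than $\la(\st^i)$, so after blurring each vertex by $r/2$ one still has $\la(\t)\ge\la(\st^i)$; the set $A_r$ of such blurred configurations is genuinely $dn$-dimensional and satisfies $\mu(A_r)\ge e^{-N^d c(r)}$. The price is that $|\Q_r|<|\Q|$, so the $m$-term no longer cancels: one has $H_\sn(\om)-H_\sn(\sta)$ containing a contribution $m(|\Q|-|\Q_r|)\le m\,d\,c\,(r+\tfrac1N)N^d$, and a similar contribution from $\sum_i(|\U^i|-|\U_r^i|)\hloc^i(\st^i)$. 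Choosing first $r$ small (depending on $\g$ and $m$) and then $N$ large makes these terms $\le\g N^d$. This is precisely why the constants $\cxxii$ and $N_0$ in the statement depend on $m$ as well as on $\g$, a dependence your scheme, had it worked, would not have produced.
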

\begin{proof}
 The proof uses the idea of the proof of \cite[Lemma~3.1]{hmr13}, namely to restrict the integral to a set of blurred configurations. But we have to blur a configuration slightly differently to the standard configuration since we have to ensure that the Lebesgue measure of the blurred tiles is not smaller than the Lebesgue measure of the corresponding standard tile.

 We start the proof with some preliminaries. Let us recall that $\M$ is $B_0$-periodic for some box $B_0$, which is the image of the cube $[0,1]^d$ under some linear map $L$. For $r\in(0,\tfrac{\ep}{4})$ and $N\in\N$ such that $\lfloor\tfrac{N}{1+r}\rfloor \ge \lceil\tfrac{4}{\ep}\rceil=:n_0$ we define a configuration $\sta_r$ with vertices $\Q_r$ and tiles $\U_r$ as follows (we suppress the $N$-dependency in the notation). It looks almost like the given tessellation $\M$, but is a bit enlarged. The domain $\I$ is partitioned into boxes $B_\k$, $\k\in\{1,\ldots,\lfloor\tfrac{N}{1+r}\rfloor\}^d$ which are slight enlargements of $B_0=L\big[[0,1]^d\big]$. In each box-direction $Le_j$ (with unit vector $e_j$), there are $\lfloor \tfrac{N}{1+r} \rfloor-n_0$ boxes scaled by the factor $(1+r)$, followed by $n_0$ boxes scaled by $O/n_0$, with ``off-cut'' $O:=N-(1+r)(\lfloor\tfrac{N}{1+r}\rfloor-n_0)$. Thus box $B_\k$, with $\k\in\{1,\ldots,\lfloor\tfrac{N}{1+r}\rfloor\}^d$, has length $\l^\k_j$ in box direction $Le_j$, $j\in\{1,\ldots,d\}$, where  $\l^\k_j=|Le_j|(1+r)$ if $1\le\k_j\le\lfloor \tfrac{N}{1+r} \rfloor-n_0$ and $\l^\k_j=|Le_j|\,O/n_0$ else. Now $\sta_r$ is defined such that $\Q_r\!\!\upharpoonright_{B_\k}=\V(\l^\k\cdot\M)$ and similarly for $\U_r$. At the separation hyperplanes between the scales, the points are moved a little bit, such that all tiles, which intersects such a separation hyperplane are scaled like the box which is to the ``left'' in the corresponding coordinate direction.
 
 Figure~\ref{fig:star}
 \begin{figure}
  \begin{center}
   \includegraphics[width=0.9\textwidth]{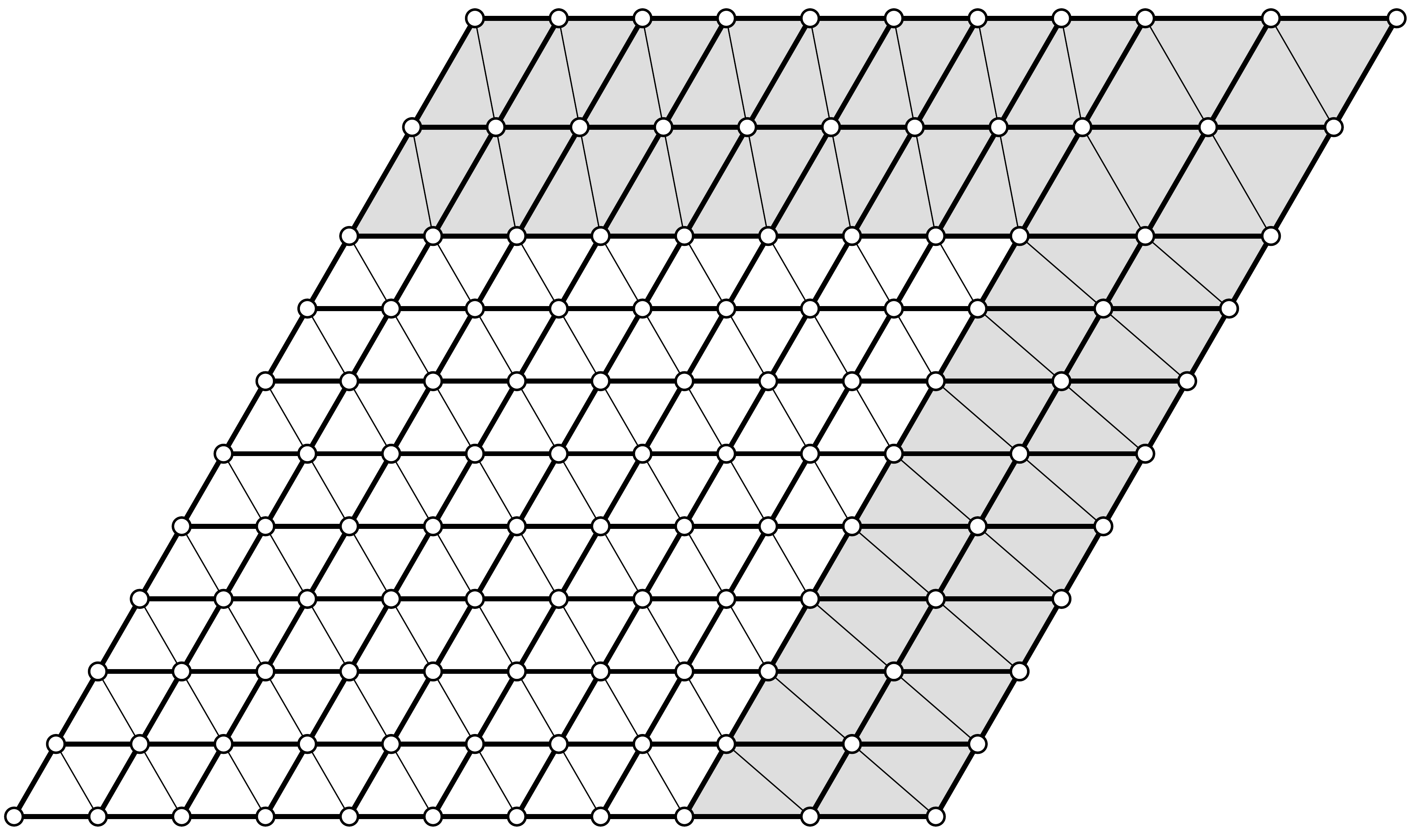}
  \end{center}
  \caption{The configuration $\sta_r$ for the triangular lattice \label{fig:star}}
 \end{figure}
 illustrates the configuration $\sta_r$ in the case where $\M$ is the triangular lattice. In that case, $B_0$ is a rhombus consisting of two triangles. The white boxes are the boxes scaled by $1+r$ and build the ``bulk''. In contrast, the grey shaded boxes are in the ``off-cut'' and scaled by larger factors which may also differ in different directions. We have to use this ``off-cut-boxes'' to ensure that $\T(\sta_r)$ completely fills the domain $\I$, whose size is a natural number times the size of $B_0$ (in each direction). 
 
 Moreover, we blur the configuration $\sta_r$ a little bit and define the set
 $$ A_r \,:=\, \big\{\om\in\ti{\Om}\mid \exi\text{bijective } f:\P(\om)\to\Q_r: \fa x\in\P(\om):|x-f(x)|<\tfrac{r}{2}\big\} $$
 of all configurations whose points are $r/2$-close to $\Q_r$. Then we claim that all configurations in $A_r$ are admitted configurations without any defect, i.e.\ $A_r\subset\Om$ and $\del\P=\emptyset$ on $A_r$. Since
 $$
 O = N-(1+r)(\lfloor\tfrac{N}{1+r}\rfloor-n_0)\,\begin{dcases}
                                              \lle N-(1+r)(\tfrac{N}{1+r}-1-n_0) = (1+r)(1+n_0) \\
                                              \gge N-(1+r)(\tfrac{N}{1+r}-n_0) = (1+r)n_0
                                             \end{dcases} $$
 we conclude $1+r\le \frac{O}{n_0}$ and
 $$ 1 \lle 1+r-2\cdot\tfrac{r}{2} \lle \tfrac{O}{n_0}+2\cdot\tfrac{r}{2}  \lle (1+r)(1+\tfrac{1}{n_0}) + r = 1+2r+\tfrac{1}{n_0}+\tfrac{r}{n_0} \lle 1+\ep$$
 as $n_0\ge\frac4\ep$ and $r\le\frac\ep4$.  By the definition of the set $A_r$, the distance between two points in $\P(\om)$ for any $\om\in A_r$ is in $[1+r-2\cdot\frac r2,\frac{O}{n_0}+2\cdot\frac r2]$ times the distance of the corresponding points in $\Q$. Thus the estimate above shows that all tiles are, up to translation, in $\Uept$ for some $i\in I$ and the claim follows.
 
 Furthermore, there exists a constant $\cxxxviii>0$ such that
 \begin{equation} \label{eq:partsum1}
  \big|\hloc^i(\t)-\hloc^i(\st^i)\big| \lle \cxxxviii 
 \end{equation}
  for all $i\in I$ and $\t\in\Uept$ since the image of the compact set $\Uept$ is compact as $\hloc^i$ is continuous.
 
 Now we begin with the actual proof. Let $\g>0$ and $m\in\R$. We choose $r\in(0,\frac\ep4)\cap(0,\frac{1}{2})$ so small that
 \begin{equation} \label{eq:partsum2}
  r(md\cxliv +\cxlvi) \lle \tfrac\g3 
 \end{equation}  
 for some constants $\cxliv,\cxlvi>0$ defined below and that for all $i\in I$ and $\t\in\Urt$
 \begin{equation} \label{eq:partsum3}
  \big|\hloc^i(\t)-\hloc^i(\st^i)\big| \lle \tfrac{\g}{3\cxiv}\,,
 \end{equation}  
 which is possible since $\hloc^i$, $i\in I$, are continuous. Furthermore, we choose $N_0\in\N\setminus\{1\}$ large enough such that $\lfloor\tfrac{N_0}{1+r}\rfloor \ge \lceil\tfrac{4}{\ep}\rceil=n_0$ and such that 
 \begin{equation} \label{eq:partsum4}
 \tfrac{1}{N_0}\big(\cxxxviii\cxliii+md\cxliv+\cxlvi\big) \lle \tfrac\g3 
 \end{equation}
 for some constant $\cxliii>0$ defined below.
 Let $N\ge N_0$. Now we estimate $\mu(A_r)$, where $A_r$ is the set of blurred configurations defined above. Since the number of points is Poisson distributed and independent from the location of the points, which are iid and uniformly distributed, it follows that
 \begin{eqnarray}
  \mu(A_r) &=&
  \mu(|\P|=|\Q_r|) \cdot \frac{\la(U_\frac{r}{2}(0))}{\la(\I)}\,|\Q_r| \cdot \frac{\la(U_\frac{r}{2}(0))}{\la(\I)}\,(|\Q_r|-1) \cdot \ldots \cdot \frac{\la(U_\frac{r}{2}(0))}{\la(\I)}\,1 \notag\\
  &=& e^{-\la(\I)}\frac{\la(\I)^{|\Q_r|}}{|\Q_r|!}\cdot \Big(\frac{\la(U_\frac{r}{2}(0))}{\la(\I)}\Big)^{|\Q_r|}\cdot|\Q_r|!
  \,=\,
  e^{-\cxiii N^d+|\Q_r|\log\la(U_\frac{r}{2}(0))} \notag\\
  &\ge&
  e^{-N^d\cdot\cxlii(r)}
  \label{eq:muAr}
 \end{eqnarray}
 for some constant $\cxlii(r)>0$ only depending on $r$ since $\la(\I)=\cxiii N^d$ and $|\Q_r|\le\cxc N^d$ by Lemma~\ref{lem:estTPBoxNd}, Assertions (\ref{asg}) and (\ref{asd}). Note that   $\cxlii(r)\to\infty$ since $\la(U_\frac{r}{2}(0))\to0$ as $r\to0$.
 
 In the following, we estimate the difference of the Hamiltonians of any configuration in $A_r$ and the standard configuration. Thereto we call $\T_\text{bulk}$ the set of tiles which are in a box which is scaled by $(1+r)$ in all directions. The set of all other tiles is called $\T_\text{off}$. Since $n_0$ is fixed, there is a uniform constant $\cxliii>0$ such that $|\T_\text{off}|\le\cxliii N^{d-1}$. It follows that, for all $\om\in\A_r$,
 \begin{eqnarray}
  H(\om)-H(\sta) 
  &=&
  \sum_{\t\in\T_\text{bulk}}\big(\hloc^{\typts}(\t)-\hloc^{\typts}(\st^{\typts})\big)+\sum_{\t\in\T_\text{off}}\big(\hloc^{\typts}(\t)-\hloc^{\typts}(\st^{\typts})\big) \notag \\
  &&+\sum_{i\in I}|\U_r^i|\hloc^i(\st^i) - \sum_{i\in I} |\U^i|\hloc^i(\st^i) + \s0-\s0-m|\Q_r|+m|\Q| \notag\\
  &\le&
  \tfrac{\g}{3\cxiv}|\U_r|+\cxxxviii\cxliii N^{d-1} - \sum_{i\in I}\big(|\U^i|-|\U_r^i|\big)\hloc^i(\st^i) + m(|\Q|-|\Q_r|) \label{eq:HAr-Hsta1}
 \end{eqnarray}
 using also the estimates (\ref{eq:partsum3}), $|\T_\text{bulk}|\le|\U_r|$ and (\ref{eq:partsum1}). Let $\cxliv:=|\V(\M\!\!\upharpoonright_{B_0})|$ and $\cxlv:=|\U^i\cap\M\!\!\upharpoonright_{B_0}\!\!|$ be the number of vertices and tiles of type $i$, respectively, in $B_0$ (of the standard configuration $\sta$). Then:
 \begin{align*}
  |\Q|&=\cxliv N^d\,, & |\Q_r| &= \cxliv \big\lfloor \tfrac{N}{1+r}\big\rfloor^d\,, &
  |\U^i|&=\cxlv N^d &&\text{and} & |\Q_r^i| &= \cxlv \big\lfloor\tfrac{N}{1+r}\big\rfloor^d\,.
 \end{align*}
 Therefore we estimate using $\frac{1}{1+r}\ge(1-r)$ and $(1-x)^d\ge1-dx$ for $x=r+\frac1N\in(0,1)$, which can be derived with Taylor expansions,
 \begin{eqnarray}
  |\Q|-|\Q_r|
  &=&
  \cxliv\big( N^d - \lfloor\tfrac{N}{1+r}\rfloor^d\big)
  \lle
  \cxliv N^d\big(1 - \big(\tfrac{1}{1+r}-\tfrac1N\big)^d\big) \notag\\
  &\le&
  \cxliv N^d\big(1 - \big(1-r-\tfrac1N\big)^d\big)
  \lle
  \cxliv N^d\big(1 - \big(1-d\big(r+\tfrac1N\big) \big)\big) \notag\\
  &=&
  d\cxliv \big(r+\tfrac1N\big) N^d\,. \label{eq:taylor} 
 \end{eqnarray}
 Analogously, we receive 
 $$ |\U^i|-|\U_r^i|\lle d\cxlv \big(r+\tfrac1N\big)N^d \,,$$
 which implies
 \begin{eqnarray}
  - \sum_{i\in I}\big(|\U^i|-|\U_r^i|\big)\hloc^i(\st^i)
  &\le&
  \sum_{i\in I}\big(|\U^i|-|\U_r^i|\big)\big|\hloc^i(\st^i)\big| \notag\\
  &\le&
  \sum_{i\in I}d\cxlv \big(r+\tfrac1N\big)N^d\big|\hloc^i(\st^i)\big|
  \,=\,
  \cxlvi \big(r+\tfrac1N\big) N^d \label{eq:Ui-Uir}
 \end{eqnarray}
 with $\cxlvi:=\sum_{i\in I}d\cxlv|\hloc^i(\st^i)|>0$. Using $|\U_r|\le\cxiv N^d$, \eqref{eq:Ui-Uir} and \eqref{eq:taylor} for the first inequality and \eqref{eq:partsum4} and \eqref{eq:partsum2} for the second inequality, we continue the estimate (\ref{eq:HAr-Hsta1}) as follows:
 \begin{eqnarray}
  H(\om)-H(\sta) 
  &\le&
  \tfrac{\g}{3\cxiv}\cxiv N^d+\cxxxviii\cxliii N^{d-1} + \cxlvi \big(r+\tfrac1N\big) N^d + md\cxliv \big(r+\tfrac1N\big) N^d \notag\\
  &=&
  N^d\big(\tfrac\g3 + \tfrac1N\big(\cxxxviii\cxliii+md\cxliv+\cxlvi\big) + r (md\cxliv+\cxlvi)\big) \notag\\
  &\le&
  N^d\big(\tfrac\g3 + \tfrac\g3 + \tfrac\g3 \big) \,=\, \g N^d \,.
  \label{eq:HAr-Hsta2}
 \end{eqnarray} 
 Finally, we estimate the partition sum using first (\ref{eq:HAr-Hsta2}) and then (\ref{eq:muAr}) to conclude the proof:
 \begin{eqnarray*}
   Z_\bsn
   &=&
   \int_\Om e^{-\be H(\om)}\,\mu(d\om)
   \gge
   e^{-\be H(\sta)} \int_{A_r} e^{-\be (H(\om)-H(\sta))}\,\mu(d\om) \\
   &\ge&
   e^{-\be H(\sta)} e^{-\be \g N^d} \mu(A_r) \\
   &\ge&
   e^{-\be H(\sta)} e^{-\be \g N^d}  e^{-N^d\cdot\cxlii(r)} 
   \,=\, 
   e^{-N^d[\be\g+\cxxii(\g,m)]}e^{-\be H(\sta)}
 \end{eqnarray*}
 with $\cxxii(\g,m):=\cxlii(r(\g,m))>0$. Note that $\cxxii(\g,m)\to\infty$ as $\g\to0$ or $m\to\infty$.
\end{proof}

\subsubsection{An Upper Bound for the Internal Energy} \label{sec:internalenergy}

In this section, we obtain an estimate of $E_\bsn[\frac{1}{|\T|}(H_\sn(\cdot)-H_\sn(\sta)]$. Thereto we will need a lower bound on $\|V-R\|^2_{L^2(\I)}$, which can be expressed as a sum over all edges of all tiles in $\T$. It turns out that taking the sum just over all edges of a suitable spanning tree is enough. But we also need to trace back the types of the edges. Therefore we introduce spanning trees of $\T$ labelled by edge types.

We define the label set $\Sigma$ as the union of all edges of $\st^i$, $i\in I$, regarded as vectors in $\R^d$ (each edge induces two vectors with opposite orientation). Let $\trees{n}$ be the set of trees with $n$ vertices labelled by elements of $\Sigma$. We denote the label of a vertex $k$ by $\xi_k$. We can consider a tree $T\in\trees{n}$ as a rooted tree with root $1$. Then, for each $l\in\{2,\ldots,n\}$, there exists a unique $k_T(l)\in\{1,\ldots,l\!-\!1\}$ such that $k_T(l)\sim l$ in $T$. For $\om\in\Om$, we define the function $\no: \{1,\ldots,|\P\setminus\Pe|\}\to\{k\in\N\mid X_k\in\P\setminus\Pe\}$ as the unique increasing bijection between these sets. 

For a labelled tree $T\in\trees{n}$ and $\om\in\Om$ with $n=|\P\setminus\Pe|$, we define the graph $G(T,\T)$ as follows: The vertex set is $\{X_{\no(k)}, k=1,\ldots,n\}$; two such vertices $X_{\no(k)}$ and $X_{\no(l)}$, $1\le k<l\le n$, form an edge, if $k=k_T(l)$ (i.e.\ $k\sim l$ in $T$) and if there is a tile $\t\in\T$ such that $X_{\no(k)},X_{\no(l)}\in\t$ and $\xi_l=v_{\scriptscriptstyle\t}(X_{\no(l)}) - v_{\scriptscriptstyle\t}(X_{\no(k)})$, where $v_{\scriptscriptstyle\t}:\t\to\st^{\typts}$ is the affine linear map defined in (\ref{eq:defvt}). Thus $G(T,\T)$ can be viewed as a graph isomorphic to a sub-graph of $T$ using vertices of $\P\setminus\Pe$ such that the label of a vertex coincide with the type of an adjacent edge in $\T$.  

A labelled tree $T\in\trees{|\P\setminus\Pe|}$ is called a \emph{labelled spanning tree} of $\T$ if $G(T,\T)$ is a spanning tree of $\T$, viewed as a graph with vertices $\P\setminus\Pe$ and edges formed by the edges of all tiles. In that case we write $T\bowtie\T$.
Since $\UT$ is connected, there exists a labelled spanning tree $T\in\trees{|\P\setminus\Pe|}$: just take any spanning tree and label the vertices accordingly level by level, beginning with the vertices adjacent to the root (whose label is irrelevant).

\begin{lem} \label{lem:spanningtree}
 There is a constant $\cxxiii>0$ such that for all $N\in\N$, $R\in\sod$, $\om\in\Om$ and  $T\in\trees{|\P\setminus\Pe|}$ with $T\bowtie\T$ the following estimate holds:
 $$ \|V-R\|^2_{L^2(\I)} \gge \cxxiii \sum_{l=2}^{|\P\setminus\Pe|} \big|(X_{\no(l)}-X_{\no(k_T(l))})-R^t\xi_l\big|^2 $$
\end{lem}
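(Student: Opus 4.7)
The plan is to bound each summand on the right-hand side by the $L^2$-norm of $V-R$ on the single tile realising the corresponding tree edge, and then exploit that every tile is used by only boundedly many summands.

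I would first fix a labelled spanning tree $T\bowtie\T$. For each $l\in\{2,\ldots,|\P\setminus\Pe|\}$, set $x_k:=X_{\no(k_T(l))}$ and $x_l:=X_{\no(l)}$, and let $\t_l\in\T$ be the tile realising the corresponding edge of $G(T,\T)$, so that $v_{\t_l}(x_l)-v_{\t_l}(x_k)=\xi_l$. Because $\t_l$ is convex, the entire segment from $x_k$ to $x_l$ lies in $\t_l$, and because $v_{\t_l}$ is continuous and affine on each simplex of the partition $v_{\t_l}^{-1}[\st^{\typ{\t_l},j}]$, its restriction to this segment is Lipschitz with derivative $V(\,\cdot\,)(x_l-x_k)$ almost everywhere. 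The fundamental theorem of calculus therefore gives
$$\xi_l=\int_0^1 V(x_k+s(x_l-x_k))(x_l-x_k)\,ds,$$
and inserting $I=R^tR$ together with the isometry property $|R^ty|=|y|$ combined with Jensen's inequality leads to
$$\big|(x_l-x_k)-R^t\xi_l\big|^2\le|x_l-x_k|^2\int_0^1\big|V(x_k+s(x_l-x_k))-R\big|^2\,ds.$$
The prefactor $|x_l-x_k|^2$ is uniformly bounded since $\t_l\in\SEept[\typ{\t_l}]$, i.e.\ $\t_l$ is up to rigid motion an $\ep$-perturbation of a fixed standard tile.

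The next step is to pass from the one-dimensional line integral to the $L^2$-norm on the whole tile $\t_l$. Denote the constant value of $V$ on the simplex $v_{\t_l}^{-1}[\st^{\typ{\t_l},j}]$ by $V_j$, and let $t_j\in[0,1]$ be the length fraction of the segment lying in that simplex, so that $\sum_j t_j=1$. Then
$$\int_0^1|V-R|^2\,ds=\sum_j t_j|V_j-R|^2\le\sum_j|V_j-R|^2\le\frac{1}{m}\sum_j\la\big(v_{\t_l}^{-1}[\st^{\typ{\t_l},j}]\big)|V_j-R|^2=\frac{1}{m}\|V-R\|^2_{L^2(\t_l)},$$
where $m>0$ is a uniform lower bound on the Lebesgue measure of any such simplex, a consequence of the structure of $\SEept$ and the constraint $\la(\t)\ge\la(\st^i)$, which together prevent degeneration of the simplicial pieces. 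Summing over $l$ and noting that any $\t\in\T$ occurs as $\t_l$ for at most $K$ indices --- where $K$ is the (uniformly bounded) maximal number of edges of a standard tile --- one gets
$$\sum_{l=2}^{|\P\setminus\Pe|}\|V-R\|^2_{L^2(\t_l)}\le K\sum_{\t\in\T}\|V-R\|^2_{L^2(\t)}\le K\|V-R\|^2_{L^2(\I)},$$
which yields the claim with some $\cxxiii>0$.

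The main obstacle is the passage from the one-dimensional line integral of $|V-R|^2$ along the segment $[x_k,x_l]$ to the $d$-dimensional $L^2$-integral over $\t_l$; this is clean only because $V$ is piecewise constant on a simplicial decomposition whose pieces have volume uniformly bounded below, and hence ultimately rests on the careful geometric setup of the admissible tiles $\Uept$.
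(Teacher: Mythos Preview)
Your proof is correct. The paper argues a bit differently: instead of integrating along the segment $[x_k,x_l]$, it works directly on each simplex $\triangle$ of the decomposition, where $V\equiv V_\triangle$ is constant, and uses the purely algebraic bound
\[
\sum_{0\le k<l\le d}\big|(x_l-x_k)-R^t(v_\t(x_l)-v_\t(x_k))\big|^2
= \sum_{0\le k<l\le d}\big|(R-V_\triangle)(x_l-x_k)\big|^2
\le c\,|V_\triangle-R|^2,
\]
then sums over all simplices of all tiles and simply drops all terms not corresponding to spanning-tree edges. Your line-integral/Jensen argument has the mild advantage that it does not rely on the spanning-tree edge being an edge of one simplex in the decomposition --- it handles transparently the case where $[x_k,x_l]$ crosses several simplices --- whereas the paper implicitly uses that each tile edge is an edge of some simplex of the triangulation. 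Conversely, the paper's route is slightly shorter because it avoids the passage from a line integral to a volume integral. One small remark: your justification of the uniform lower bound $m>0$ on simplex volumes really comes from the $\ep$-perturbation structure (vertices of $v_\t^{-1}[\st^{i,j}]$ are $\ep$-close to those of the nondegenerate simplex $\st^{i,j}$), not from the constraint $\la(\t)\ge\la(\st^i)$, which controls only the total volume.
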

\begin{proof}
 Let $\t\in\T$. Let $\Sim(\t):=\{v_{\scriptscriptstyle\t}^{-1}[\st^{\typts,j}]\mid j=1,\ldots,J_\typts\}$ be the set of simplices on which $v_{\scriptscriptstyle\t}$ is affine linear. For a simplex $\triangle\in\Sim(\t)$, let $\substack{{\scriptscriptstyle\triangle}\\[-0.2ex]{\textstyle\e}}:\{0,\ldots,d\}\to\{k\in\N\mid X_k \text{ is a vertex of }\triangle\}$ be the unique increasing bijection between these sets.
 
 In the following estimate for a single simplex we simple write $x_k$ for $X_{\substack{{\scriptscriptstyle\triangle}\\[-0.3ex]\e}(k)}$, $k=0,\ldots,d$. We have
 $$ v_{\scriptscriptstyle\t}(x) \,=\, V_{\!\scriptscriptstyle\triangle} x + z_{\scriptscriptstyle\triangle}\,,\qquad x\in\triangle \,,$$
 for some $V_{\!\scriptscriptstyle\triangle}\in\R^\dxd$ and $z_{\scriptscriptstyle\triangle}\in\R^d$ since $v_{\scriptscriptstyle\t}$ is affine linear on $\triangle$. Using this and $|Ry|=|y|$ because of $R\in\sod$, it follows that
 \begin{align}
  &\sum_{0\le k<l\le d} \big|(x_l-x_k)-R^t(v_{\scriptscriptstyle\t}(x_l)\!-\!v_{\scriptscriptstyle\t}(x_k))\big|^2 
  \,\,=\,
  \sum_{0\le k<l\le d} \big|R(x_l-x_k)-(v_{\scriptscriptstyle\t}(x_l)\!-\!v_{\scriptscriptstyle\t}(x_k))\big|^2  \notag\\
  &\qquad\quad=\,
  \sum_{0\le k<l\le d} \big|R(x_l-x_k)-((V_{\!\scriptscriptstyle\triangle} x_l+z_{\scriptscriptstyle\triangle})-(V_{\!\scriptscriptstyle\triangle} x_k+z_{\scriptscriptstyle\triangle}))\big|^2  \notag\\
  &\qquad\quad\le\,
  \sum_{0\le k<l\le d} |R-V_{\!\scriptscriptstyle\triangle}|^2 |x_l-x_k|^2
  \lle
  \cxlvii |R-V_{\!\scriptscriptstyle\triangle}|^2
  \label{eq:SpanSingleSim}
 \end{align}
 for some uniform constant $\cxlvii>0$ since the size of a tile is uniformly bounded.
 
 Therefore we can estimate using the fact that the size of a simplex is uniformly bounded
 \begin{eqnarray*}
  \|V-R\|^2_{L^2(\I)}
  &\ge&
  \sum_{\t\in\T} \|V-R\|^2_{L^2(\t)}
  \,=\,
  \adjustlimits\sum_{\t\in\T} \sum_{\triangle\in\Sim(\t)} \la(\triangle)|V_{\!\scriptscriptstyle\triangle} - R|^2 \\
  &\!\!\!\!\stackrel{(\ref{eq:SpanSingleSim})}{\ge}\!\!\!\!&
  \adjustlimits\sum_{\t\in\T} \sum_{\triangle\in\Sim(\t)} \frac{\la(\triangle)}{\cxlvii}
   \!\!\sum_{0\le k<l\le d}\!\! \big|(X_{\substack{{\scriptscriptstyle\triangle}\\[-0.3ex]\e}(l)}\!-\!X_{\substack{{\scriptscriptstyle\triangle}\\[-0.3ex]\e}(k)})-R^t(v_{\scriptscriptstyle\t}(X_{\substack{{\scriptscriptstyle\triangle}\\[-0.3ex]\e}(l)})\!-\!v_{\scriptscriptstyle\t}(X_{\substack{{\scriptscriptstyle\triangle}\\[-0.3ex]\e}(k)}))\big|^2\\
   &\ge&
   \cxxiii \sum_{l=2}^n \big|(X_{\no(l)}-X_{\no(k_T(l))})-R^t\xi_{l}\big|^2
 \end{eqnarray*}
 for some $\cxxiii>0$. We obtained the last inequality by restricting the sum, which is taken over all edges of all simplices, to edges in $G(T,\T)$; note that $\xi_{\no^{-1}(\substack{{\scriptscriptstyle\triangle}\\[-0.3ex]\e}(l))}=v_{\scriptscriptstyle\t}(X_{\substack{{\scriptscriptstyle\triangle}\\[-0.3ex]\e}(l)})-v_{\scriptscriptstyle\t}(X_{\substack{{\scriptscriptstyle\triangle}\\[-0.3ex]\e}(k)})$ if $\{X_{\substack{{\scriptscriptstyle\triangle}\\[-0.3ex]\e}(k)},X_{\substack{{\scriptscriptstyle\triangle}\\[-0.3ex]\e}(l)}\}$ is an edge of $G(T,\T)$ since $T\bowtie\T$.
\end{proof}

The following lemma is an analogue to \cite[Lemma~3.3]{hmr13}.
\begin{lem} \label{lem:EH-H}
 There exist constants $\cxxiv>0$ and $\be_0>0$ such that for all $m\ge m_0$ and all $\de>0$ there exist $N_0(\de,m)\in\N$ and $\cxxv(\de,m)\in\R$ such that for all $N\ge N_0$, $\s\ge\s_0(N,m)$ and $\be\ge\be_0$ the following estimate holds:
 $$ E_\bsn\big[ \tfrac{1}{|\T|}\big(H_\sn(\cdot)-H_\sn(\sta)\big)\big]
 \lle \de + \tfrac1\be \exp\big(-N^d\big[\be\tfrac{c_0}{8}\de + \cxxiv\log\be - \cxxv(\de,m)\big]\big) $$
\end{lem}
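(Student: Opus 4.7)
My plan is to split $E_\bsn[f]$, with $f := (H_\sn - H_\sn(\sta))/|\T|$, according to whether $f \le \delta$ or $f > \delta$: the first piece is at most $\delta$ trivially (since $P_\bsn$ is a probability measure), and the task is to control the tail $E_\bsn[f;f>\delta]$ by combining the elementary exponential estimate $xe^{-\beta x/2} \le 2/(\beta e)$ with the lower bound for $Z_\bsn$ from Lemma~\ref{lem:partitionsum}.

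Writing $G := H_\sn - H_\sn(\sta)$, I would first invoke Lemma~\ref{lem:H-estimate} to ensure $G \ge 0$ on $\Om$ in the regime $\s \ge \s_0(N,m)$, $m \ge m_0$; together with the defining property $|\T| \ge \co N^d$ of $\Om$, this gives $f \le G/(\co N^d)$ everywhere and $G \ge \delta\co N^d$ on the event $\{f>\delta\}$. Next, factor $e^{-\beta H_\sn} = e^{-\beta H_\sn(\sta)}e^{-\beta G}$ and split $e^{-\beta G} = e^{-\beta G/2}\cdot e^{-\beta G/2}$. The pointwise bound $Ge^{-\beta G/2} \le 2/(\beta e)$ handles the first factor, while the second is at most $e^{-\beta\delta\co N^d/2}$ on $\{f>\delta\}$. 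Integrating against $\mu$ gives
\[
 E_\bsn[f;f>\delta] \,\le\, \frac{1}{\co N^d}\cdot\frac{e^{-\beta H_\sn(\sta)}}{Z_\bsn}\int_{\{f>\delta\}} G e^{-\beta G}\,d\mu \,\le\, \frac{2\,e^{-\beta H_\sn(\sta)}}{\co N^d\,\beta e\,Z_\bsn}\, e^{-\beta\delta\co N^d/2}.
\]

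Finally, I would apply Lemma~\ref{lem:partitionsum} with a (possibly $\beta$-dependent) $\gamma>0$ to get $e^{-\beta H_\sn(\sta)}/Z_\bsn \le \exp(N^d[\beta\gamma + \cxxii(\gamma,m)])$. Any fixed choice $\gamma<\co\delta/2$ leaves a coefficient of at least $\co\delta/8$ multiplying $\beta$ in the exponent (e.g.\ $\gamma=3\co\delta/8$ gives precisely $\co\delta/8$), and letting $\gamma$ tend to zero at a suitable rate in $\beta$ produces the logarithmic correction $\cxxiv\log\beta$, exploiting that $\cxxii(\gamma,m)\sim|\log\gamma|$ as $\gamma\to 0$ (as read off from the proof of Lemma~\ref{lem:partitionsum}, where $\cxlii(r)$ scales with $|\Q_r|\log\la(U_{r/2}(0))$ and $r$ is linked to $\gamma$). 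All remaining $\beta$- and $N$-independent terms, including the prefactor $2/(\co N^d e)$ and the $O(\log N/N^d)$ correction for $N \ge N_0(\delta,m)$, are incorporated into $\cxxv(\delta,m)$.

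The main obstacle is the careful choice of $\gamma$ (possibly depending on $\beta$) and the bookkeeping of constants needed to match the precise form of the exponent in the statement; the underlying exponential-moment argument is standard for Gibbs measures once the nonnegativity of $G$ (from Lemma~\ref{lem:H-estimate}) and the partition-function lower bound (from Lemma~\ref{lem:partitionsum}) are in hand.
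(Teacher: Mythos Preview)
Your underlying strategy is sound and considerably simpler than the paper's: splitting on $\{f\le\delta\}$ versus $\{f>\delta\}$, using $Ge^{-\beta G/2}\le 2/(\beta e)$ together with $G\ge\delta\co N^d$ on the tail, and bounding the remaining $\mu$-integral trivially by $1$, you obtain (with a fixed $\gamma$ in Lemma~\ref{lem:partitionsum})
\[
 E_\bsn\big[f;\,f>\delta\big]\ \le\ \frac{2}{\co N^d\beta e}\,\exp\!\Big(-N^d\big[\beta(\tfrac{\co\delta}{2}-\gamma)-\cxxii(\gamma,m)\big]\Big).
\]
This bypasses entirely the spanning-tree machinery (Lemma~\ref{lem:spanningtree}, Cayley's formula, Gaussian integration) that the paper uses to bound $\int_\Om e^{-\frac\be4\cxix\|V-R\|^2}\,d\mu$.

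However, your proposal for obtaining the $\cxxiv\log\beta$ term is wrong on two counts. First, if $\gamma=\gamma(\beta)$ then $N_0(\gamma,m)$ from Lemma~\ref{lem:partitionsum} depends on $\beta$ (in its proof, $N_0\gtrsim 1/\gamma$), violating the quantifier structure of the lemma, which demands $N_0=N_0(\delta,m)$ independent of $\beta$. Second, and more basically, $\cxxii(\gamma,m)$ enters with a \emph{minus} sign inside the bracket: letting $\gamma\to 0$ so that $\cxxii\sim|\log\gamma|\sim\log\beta$ produces a $-c\log\beta$ term, not $+\cxxiv\log\beta$.

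The correct fix within your framework is to keep $\gamma$ fixed but strictly smaller than $3\co\delta/8$ (say $\gamma=\co\delta/4$), leaving a slack $\beta\co\delta/4$ in the linear term. Since $\sup_{\beta>0}\big(\cxxiv\log\beta-\beta\co\delta/4\big)<\infty$ for any fixed $\cxxiv>0$, this slack dominates $\cxxiv\log\beta$ up to an additive constant depending only on $\delta$, which is then absorbed into $\cxxv(\delta,m)$. In the paper's proof, by contrast, the $\cxxiv\log\beta$ term arises genuinely from the Gaussian integral $\int_{\R^d}e^{-\beta|y|^2/\text{const}}\,dy\asymp\beta^{-d/2}$ over roughly $N^d$ edges of a spanning tree; your route trades this entropic input for a weaker (but, for Corollary~\ref{cor:limits}, still sufficient) bound.
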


\begin{proof}
 We use some ideas of the proof of \cite[Lemma~3.3]{hmr13}.
 Let $\de>0$ and $m\ge m_0$. We set $N_0(\de,m)=N_0(\g,m)$ as in Lemma~\ref{lem:partitionsum} with $\g=\tfrac{c_0}{8}\de$. Let $N\ge N_0(\de,m)$ and $\s\ge\s_0(N,m)$. We set
 \begin{eqnarray*}
  \Om^{>\de} &:=& \big\{\om\in\Om: H_\sn(\om)-H_\sn(\sta)>\de|\T|\big\} \qquad\text{and}\\
  \Om^{\le\de} &:=& \big\{\om\in\Om: H_\sn(\om)-H_\sn(\sta)\le\de|\T|\big\} \,. 
 \end{eqnarray*}
 First we estimate
 \begin{equation} \label{eq:Qless}
  E_\bsn\big[ \tfrac{1}{|\T|}\big(H(\cdot)-H(\sta)\big)\1_{\Om^{\le\de}}\big]
  \lle
  E_\bsn\big[ \tfrac{1}{|\T|}\de|\T|\1_{\Om^{\le\de}}\big]
  \lle
  \de\,.
 \end{equation}
 
 The estimate on $\Om^{>\de}$ is much more involved. Using the inequality $xe^{-x}\le e^{-x/2}$ for $x=\be(H(\om)-H(\sta))$ and $|\T|\ge1$, we estimate similarly as in the proof of Markov's Inequality (writing shortly $E$ for $E_\bsn$):
 \begin{eqnarray}
  E\big[ \tfrac{1}{|\T|}\big(H(\cdot)-H(\sta)\big)\1_{\Om^{>\de}}\big]
  &=&
  \frac{e^{-\be H(\sta)}}{Z_\bsn} \int_{\Om^{>\de}} \tfrac{1}{|\T|}\big(H(\om)-H(\sta)\big) e^{-\be(H(\om)-H(\sta))} \,\mu(d\om) \notag\\
  &\le&
  \frac{e^{-\be H(\sta)}}{\be Z_\bsn} \int_{\Om^{>\de}} e^{-\frac\be2(H(\om)-H(\sta))} e^{\frac\be4(H(\om)-H(\sta)-\de|\T|)}\,\mu(d\om) \notag\\
  &\le&
  \frac{e^{-\be H(\sta)}}{\be Z_\bsn} \int_{\Om} e^{-\frac\be4(H(\om)-H(\sta)+\de|\T|)}\,\mu(d\om)\notag\\
  &\le&
  \frac{e^{-\be H(\sta)}}{\be Z_\bsn}\, e^{-\frac\be4\de\co N^d} \int_{\Om} e^{-\frac\be4 \cxix\|V-R\|^2_{L^2(\I)}}\,d\mu\,, \label{eq:EH-H1}
 \end{eqnarray}
 where we used Lemma~\ref{lem:H-estimate} and $|\T|\ge\co N^d$ in the last step. Now we partition $\Om$ into
 $ \Om_n := \{\om\in\Om: |\P\setminus\Pe|=n\}$, $n\in\N$.  Using Lemma~\ref{lem:spanningtree}, we estimate the integral in the last line restricted to $\Om_n$
 \begin{eqnarray}
   \lefteqn{\int_{\Om_n} e^{-\frac\be4 \cxix\|V-R\|^2_{L^2(\I)}}\,d\mu}\quad \notag\\
   &\le&
   \sum_{T\in\trees{n}} \int_{\Om_n} \1_{T\bowtie\T}\exp\!\Big[\!-\!\tfrac\be4 \cxix\cxxiii \sum_{l=2}^n \big|(X_{\no(l)}-X_{\no(k_T(l))})-R^t\xi_l\big|^2\Big]d\mu\ \notag\\
   &\le&
   \sum_{T\in\trees{n}} \int_{\I^n} \exp\!\Big[\!-\!\tfrac\be4 \cxix\cxxiii  \sum_{l=2}^n \big|(x_l-x_{k_T(l)})-R^t\xi_l\big|^2\Big] \frac{dx_1}{\la(\I)}\cdots\frac{dx_n}{\la(\I)}
   \label{eq:intV-R1}
 \end{eqnarray}
 where we used $\1_{T\bowtie\T}\le1$ and the fact that $X_{\no(k)}$, $1\le k\le n$, are independent and uniformly distributed on $\I$.
 For each tree $T$, we define the matrix $M_T=(M_{kl})_{kl}\in\R^\nxn$ as follows: $M_{kk}=1$, $M_{kl}=-1$ if $k=k_T(l)$ and $M_{kl}=0$ else. Then $\det M_T=1$ since all diagonal entries are $1$ and $M_T$ is a lower triangular matrix as $k_T(l)<l$. Using the transformation
 $$ y = M_T x - R^t\xi $$
 with $x=(x_1,\ldots,x_n)^t$, $y=(y_1,\ldots,y_n)^t$ and $\xi=(0,\xi_2,\ldots,\xi_n)^t$,  we continue 
 \begin{eqnarray}
  (\ref{eq:intV-R1})
  &=&
  \sum_{T\in\trees{n}} \frac{1}{\la(\I)^n} \int_{y[\I^n]} \exp\!\Big[\!-\!\tfrac\be4 \cxix\cxxiii  \sum_{l=2}^n |y_l|^2\Big] \,dy_1\ldots dy_n \notag\\
  &\le&
  \sum_{T\in\trees{n}} \frac{\la(y_1[\I])}{\la(\I)^n} \Big(\int_{\R^d} e^{-\frac\be4 \cxix\cxxiii|y_2|^2} \,dy_2\Big)^{n-1} \notag\\
  &=&
  \sum_{T\in\trees{n}} \frac{1}{\la(\I)^{n-1}} \Big(\frac{1}{\be\cxlviii}\Big)^{\frac{d}{2}(n-1)} 
  =
  n^{n-2}|\Sigma|^n \big(\la(\I)(\be\cxlviii)^{\frac{d}{2}}\big)^{-(n-1)}
  \label{eq:intV-R2}
 \end{eqnarray}
 with $\cxlviii:= \cxix\cxxiii/(8\pi)$. In the last line we used first $y_1[\I]=\I$ and second $|\trees{n}| = n^{n-2}|\Sigma|^n$ by Cayley's formula. 
 
 Lemma~\ref{lem:estTPBoxNd}, Assertions~(\ref{asg}) and (\ref{asd}), state that $\la(\I)=\cxiii N^d$ and $\Om_n=\{|\P\setminus\Pe|=n\}=\emptyset$ if $n\notin A:= [\cxvi N^d,\cxc N^d]\cap\N$, respectively. Therefore (\ref{eq:intV-R2}) implies, with $\cxlix=\cxlviii\!^{\frac d2}\cxiii/(\cxc\Sigma)$,
 \begin{eqnarray}
  \int_{\Om} e^{-\frac\be4 \cxix\|V-R\|^2_{L^2(\I)}}\,d\mu\
  &\le&
  \sum_{n\in A}  n^{n-2}|\Sigma|^n \big(\la(\I)(\be\cxlviii)^{\frac{d}{2}}\big)^{-(n-1)} \notag\\
  &\le&
  \sum_{n\in A} |\Sigma|^{-1}\Big(\frac{\cxc N^d |\Sigma|}{\cxiii N^d(\be\cxlviii)^{\frac{d}{2}}}\Big)^{n-1}
  \,=\,
  \sum_{n\in A} |\Sigma|^{-1} \big(\cxlix\be^{\frac{d}{2}}\big)^{-(n-1)} \notag\\
  &\le&
  (\cxc-\cxvi)N^d|\Sigma|^{-1}\big(\cxlix\be^{\frac{d}{2}}\big)^{-(\cxvi N^d-1)} \notag\\
  &\le&
  e^{-N^d[\cxxiv\log\be-\cl]}
  \label{eq:intV-R3}
 \end{eqnarray}
 for $\be\ge\be_0:=\cxlix\!^{-\frac2d}>0$ and some constants $\cxxiv>0$ and $\cl\in\R$.
 
 Using Lemma~\ref{lem:partitionsum} and (\ref{eq:intV-R3}), we estimate (\ref{eq:EH-H1}) further:
 \begin{eqnarray}
  E_\bsn\big[ \tfrac{1}{|\T|}\big(H(\cdot)-H(\sta)\big)\1_{\Om^{>\de}}\big]
  &\le&
  \tfrac1\be e^{+N^d[\be\g+\cxxii(\g,m)]} e^{-\frac\be4\de\co N^d} e^{-N^d[\cxxiv\log\be-\cl]} \notag\\
  &=&
  \tfrac1\be \exp\big(-N^d\big[\be\tfrac\co4\de-\be\g+\cxxiv\log\be-\cl-\cxxii(\g,m)\big]\big) \notag\\
  &=&
  \tfrac1\be \exp\big(-N^d\big[\be\tfrac\co8\de+\cxxiv\log\be-\cxxv(\de,m)\big]\big)
  \label{eq:Qge}
 \end{eqnarray}
 with $\g=\frac{c_0}{8}\de$ and $\cxxv(\de,m)=\cl+\cxxii(\frac{c_0}{8}\de,m)\in\R$. The combination of (\ref{eq:Qless}) and (\ref{eq:Qge}) yields the conclusion of the lemma. 
\end{proof}

\subsubsection{Results} \label{sec:results}

\begin{corol} \label{cor:limits}
 The following statements hold for all $m\ge m_0$:
 \begin{alignat}{2}
  \adjustlimits\lim_{\be\to\infty} \limsup_{N\to\infty} \sup_{\s\ge\s_0(N,m)} &
  E_\bsn\big[ \tfrac{1}{|\T|}\big(H_\sn(\cdot)-H_\sn(\sta)\big)\big] &&\,=\, 0 \label{eq:lim1}\\
  \adjustlimits\lim_{\be\to\infty} \limsup_{N\to\infty} \sup_{\s\ge\s_0(N,m)} &
  E_\bsn\big[ \tfrac{1}{|\T|} \inf_{R\in\sod} \sum_{\t\in\T} \|V-R\|^2_{L^2(\t)}\big] &&\,=\, 0 \label{eq:lim2}\\
  \adjustlimits\lim_{\be\to\infty} \limsup_{N\to\infty} \sup_{\s\ge\s_0(N,m)} &
  E_\bsn\big[ \tfrac{1}{\la(\I)} \inf_{R\in\sod} \|V-R\|^2_{L^2(\I)}\big] &&\,=\, 0 \label{eq:lim3}
 \end{alignat}
\end{corol}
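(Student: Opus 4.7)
The plan is to derive all three limits by assembling the two main estimates of the preceding subsections: Lemma~\ref{lem:EH-H}, which controls $E_\bsn[\tfrac{1}{|\T|}(H_\sn-H_\sn(\sta))]$ quantitatively, together with Lemma~\ref{lem:H-estimate}, which provides the pointwise lower bound $H_\sn(\om)-H_\sn(\sta)\ge\cxix\|V-R\|^2_{L^2(\I)}$ for some random rotation $R=R(\om)\in\sod$ under the standing hypotheses $m\ge m_0$ and $\s\ge\s_0(N,m)$.

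First I would prove \eqref{eq:lim1} directly from Lemma~\ref{lem:EH-H}. Fix $m\ge m_0$ and an arbitrary $\de>0$. The upper bound $\de+\tfrac{1}{\be}\exp(-N^d[\be\tfrac{\co}{8}\de+\cxxiv\log\be-\cxxv(\de,m)])$ supplied by that lemma is independent of $\s$, so it survives the supremum over $\s\ge\s_0(N,m)$. For $\be$ large enough (depending on $\de$ and $m$) the bracket in the exponent is positive, so the exponential factor vanishes as $N\to\infty$, yielding $\limsup_{N\to\infty}\sup_{\s}E_\bsn[\tfrac{1}{|\T|}(H_\sn-H_\sn(\sta))]\le\de$; this bound is then preserved by $\lim_{\be\to\infty}$. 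Since the integrand is non-negative under the standing hypotheses (again by Lemma~\ref{lem:H-estimate}) and $\de>0$ was arbitrary, the iterated limit equals $0$.

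The remaining two limits reduce to \eqref{eq:lim1} by Lemma~\ref{lem:H-estimate}. With $R=R(\om)$ the rotation produced by that lemma, the identity $\sum_{\t\in\T}\|V-R\|^2_{L^2(\t)}=\|V-R\|^2_{L^2(\UT)}$ and the inclusion $\UT\subseteq\I$ give
\[
\textstyle\sum_{\t\in\T}\|V-R\|^2_{L^2(\t)}\;\le\;\|V-R\|^2_{L^2(\I)}\;\le\;\tfrac{1}{\cxix}\bigl(H_\sn(\om)-H_\sn(\sta)\bigr).
\]
Passing to $\inf_{R'\in\sod}$ on the left, dividing by $|\T|$, and taking $E_\bsn$ derives \eqref{eq:lim2} as a direct consequence of \eqref{eq:lim1}. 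For \eqref{eq:lim3}, the same chain combined with $\la(\I)\ge\cxv|\T|$ from Lemma~\ref{lem:estTPBoxNd}(\ref{asf}) yields $\tfrac{1}{\la(\I)}\|V-R\|^2_{L^2(\I)}\le\tfrac{1}{\cxv\cxix|\T|}(H_\sn-H_\sn(\sta))$, and another application of \eqref{eq:lim1} closes the argument. There is no real obstacle here beyond this accounting: the substantive analytic work has been discharged in the preceding lemmas, and the structural feature that makes the ordering $\lim_\be\limsup_N\sup_\s$ work is precisely the $\s$-uniformity of the bound in Lemma~\ref{lem:EH-H}. As remarked below Theorem~\ref{thm:symmetrybreaking}, \eqref{eq:lim2} is equivalent to the conclusion of that theorem, so the corollary in particular yields the main result.
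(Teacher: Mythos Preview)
Your proposal is correct and follows essentially the same approach as the paper: you invoke Lemma~\ref{lem:EH-H} to obtain \eqref{eq:lim1} (using Lemma~\ref{lem:H-estimate} for non-negativity), and then reduce \eqref{eq:lim2} and \eqref{eq:lim3} to \eqref{eq:lim1} via the pointwise bound of Lemma~\ref{lem:H-estimate} together with the comparability of $|\T|$ and $\la(\I)$ from Lemma~\ref{lem:estTPBoxNd}(\ref{asf}). The only cosmetic difference is that the paper threads both \eqref{eq:lim2} and \eqref{eq:lim3} through a single chain of inequalities, whereas you handle them separately.
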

\begin{proof}
 Let $\de>0$. We define
 $$ f(\be,\de,m) := \be\tfrac{c_0}{8}\de + \cxxiv\log\be - \cxxv(\de,m) \,.$$
 Then $ \lim_{\be\to\infty}  f(\be,\de,m) = \infty$ for fixed $\de$ and $m$.
 Lemma~\ref{lem:EH-H} states that for all $\be\ge\be_0$ and $N\ge N_0(\de,m)$
 $$ \sup_{\s\ge\s_0(N,m)} E_\bsn\big[ \tfrac{1}{|\T|}\big(H_\sn(\cdot)-H_\sn(\sta)\big)\big]
    \lle 
    \de + \tfrac1\be e^{-N^d f(\be,\de,m)}
    \lle
    \de + \tfrac1\be e^{-f(\be,\de,m)} $$
 if $f(\be,\de,m)>0$ (which is fulfilled for large enough $\be$). Therefore
 $$ \adjustlimits\sup_{N\ge N_0(\de,m)} \sup_{\s\ge\s_0(N,m)} E_\bsn\big[ \tfrac{1}{|\T|}\big(H_\sn(\cdot)-H_\sn(\sta)\big)\big]   \lle  \de + \tfrac1\be e^{-f(\be,\de,m)} \,,$$
 which implies
 $$  \adjustlimits\limsup_{\be\to\infty} \limsup_{N\to\infty} \sup_{\s\ge\s_0(N,m)} 
  E_\bsn\big[ \tfrac{1}{|\T|}\big(H_\sn(\cdot)-H_\sn(\sta)\big)\big]
  \lle \lim_{\be\to\infty} \big(\de + \tfrac1\be e^{-f(\be,\de,m)}\big) \,=\, \de $$
 and therefore claim (\ref{eq:lim1}) with ``$\le 0$'' instead of ``$=0$'' and with ``$\limsup$'' instead of ``$\lim$'' since $\de>0$ was arbitrary.
 
 Lemma~\ref{lem:H-estimate} states that there exists $R(\om)$ such that
 $$ \inf_{R\in\sod} \|V(\om)-R\|^2_{L^2(\I)} \lle \|V(\om)-R(\om)\|^2_{L^2(\I)} \lle \tfrac1\cxix\big( H_\sn(\om)-H_\sn(\sta)\big) \,.$$
 Thus we can estimate using also Lemma~\ref{lem:estTPBoxNd}(\ref{asf})
 \begin{eqnarray*}
  0 &\le&  E_\bsn\big[ \tfrac{1}{|\T|} \inf_{R\in\sod} \sum_{\t\in\T} \|V-R\|^2_{L^2(\t)}\big] \\
  &\le&
  E_\bsn\big[ \tfrac{1}{\cxi\la(\I)} \inf_{R\in\sod} \|V-R\|^2_{L^2(\I)}\big]  \\
  &\le&
  E_\bsn\big[ \tfrac{1}{\cxi\cxv|\T|} \inf_{R\in\sod} \|V-R\|^2_{L^2(\I)}\big] \\
  &\le&
  \tfrac{1}{\cxi\cxv\cxix}\, E_\bsn\big[ \tfrac{1}{|\T|} \big( H_\sn(\cdot)-H_\sn(\sta)\big) \big] \,.
 \end{eqnarray*}
 Therefore, the already proven version of claim (\ref{eq:lim1}), namely the one with ``$\le 0$'' and ``$\limsup$'', implies the real claim (\ref{eq:lim1}) as well as claims (\ref{eq:lim2}) and (\ref{eq:lim3}).
\end{proof}
\begin{proof}[Proof of Theorem~\ref{thm:symmetrybreaking}]
 It is exactly statement \eqref{eq:lim2} of Corollary~\ref{cor:limits} above.
\end{proof}

\subsection{Some Concrete Models} \label{sec:concretemodels}

In this section, we want to give two concrete models to which we can apply the results of the previous sections. Thereto we have to choose all components stated in the beginning of Section~\ref{sec:modeldef}. First we consider a model on the triangular lattice which is an analogue to the model considered in \cite{hmr13}. Then we work with the $d$-dimensional cubic lattice. Other models can be constructed similarly.

\subsubsection{Two-dimensional Triangular Lattice}

As already stated, the following model is an analogue to \cite{hmr13}. Thus we work with their set-up and fix
\begin{enumerate}[\hspace{1.5em}(a)]
 \item 
 a real-valued potential function $\phi$ defined in an open interval containing $1$ such that $\phi$ is twice continuously differentiable with $\phi''>0$ and $\phi'(1)=0$,
 \item
 an $\al>0$ so small that $\phi$ is defined on $[1-\al,1+\al]$ and that \cite[Corollary~2.4]{hmr13} holds and
 \item
 an $\ell\in(1-\al/2,1+\al/2)$.
\end{enumerate}
This are almost literally the same assumptions as in \cite[page~3]{hmr13}. We only use the letter $\phi$ for the potential since $V$ has a different meaning here.

We identify $\C$ and $\R^2$ and work on the triangular lattice $A_2=\Z+\tau\Z$ with $\tau=e^{i\pi/3}$ and edges formed by nearest neighbours. In the following, we choose the components of our model.
\begin{enumerate}[\hspace{1.5em}1.]
 \item 
  Let us define the tessellation $\M$ of $\R^2$ first. All tiles will have the same type, i.e.\ $I=\{1\}$. Therefore we omit the superscript $i=1$ in the following. Let the standard tile $\st$ be the triangle with vertices $s_1:=0$, $s_2:=\ell1$ and $s_3:=\ell\tau$, i.e.\ 
  $$ \st := \big\{\la_1 \ell + \la_2 \ell\tau \mid \la_1.\la_2\ge0,\, \la_1+\la_2\le1 \big\} \,.$$
  Then the tessellation $\M$ is given by
  $$ \M := \big\{ \t=z+\xi\st \mid z\in\ell A_2, \xi\in\{1,\tau\}\big\}\,.$$
 \item We choose the parameter $\ep\in(0,\frac\al4)$ arbitrary.
 \item We choose the parameter $\rho\in(0,\frac{\ell}{3})$ arbitrary.
 \item We choose the parameter $\co>0$ arbitrary.
 \item
  The local Hamiltonian is induced by the potential $\phi$ and defined by
  \begin{alignat*}{2}
   \hloc :&&\:\, \Uept[] &\,\to\, \R \\
   &&\:\, \t=\hull\{x_1,x_2,x_3\} &\,\mapsto\, \2\big(\phi(|x_1-x_2|)+\phi(|x_2-x_3|)+\phi(|x_3-x_1|)\big) \,
  \end{alignat*}
  where $x_1$, $x_2$, $x_3$ are the corners of $\t$. Since $|x_1-x_2|\le|x_1-s_1|+|s_1-s_2|+|s_2-x_2|\le\ell+2\ep<1+\al$ and similarly $|x_1-x_2|\ge1-\al$ and for the other vertex-pairs we conclude that $\hloc$ is well-defined. Moreover, it inherits continuity from $\phi$. Lemma~\ref{lem:conHlocfortriangle} below shows that inequality \eqref{eq:conHloc} is fulfilled.
 \item
  Finally we define the quantity $S$ measuring the surface of the crystal by
  $$ S := |\del\P| $$
  such that condition \eqref{eq:conS} is obviously fulfilled (with $\ciii=1$).
\end{enumerate}
The upcoming lemma shows that the local Hamiltonian indeed fulfils inequality \eqref{eq:conHloc}.
\begin{lem} \label{lem:conHlocfortriangle}
 There are constants $\ci>0$ and $\cii\in\R$ (depending on $\phi$) such that inequality \eqref{eq:conHloc} holds for all $\t\in\Uept[]$, i.e.\
 $$ H_\text{\emph{loc}}(\t)-H_\text{\emph{loc}}(\st)\,\ge\, \ci \|\dist(\nabla v_{\scriptscriptstyle\t},\sot)\|^2_{L^2(\t)} + \cii\big(\la(\t)-\la(\st)\big)\,,$$
 where $v_{\scriptscriptstyle\t}$ is the affine linear map mapping $\t$ to $\st$.
\end{lem}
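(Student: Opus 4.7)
The plan is to reduce the claim to a pointwise estimate on the single tile $\t$ by exploiting that $v_\t$ is affine linear, and then to combine a second-order Taylor expansion of $\phi$ around $\ell$ with the local rigidity estimate \cite[Corollary~2.4]{hmr13}, whose applicability in the range $[1-\al,1+\al]$ is guaranteed by the choice of $\al$ in hypothesis (b).

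First I would observe that, since $v_\t$ is affine linear with $v_\t(x_k)=s_k$, its Jacobian $V:=\nabla v_\t$ is constant on $\t$, so
\[
 \|\dist(\nabla v_\t,\sot)\|^2_{L^2(\t)}=\la(\t)\,\dist(V,\sot)^2 .
\]
The constraints defining $\Uept[]$ force each edge length $a_{ij}:=|x_i-x_j|$ into $[\ell-2\ep,\ell+2\ep]\subseteq[1-\al,1+\al]$, so $\phi$ is $C^2$ on each $a_{ij}$ and $\kappa:=\min_{[1-\al,1+\al]}\phi''>0$. Taylor's theorem then gives $\phi(r)-\phi(\ell)\ge\phi'(\ell)(r-\ell)+\tfrac\kappa 2(r-\ell)^2$ for every such $r$, and summing over the three edges yields
\[
 \hloc(\t)-\hloc(\st)\;\ge\;\tfrac{\phi'(\ell)}{2}\sum_{i<j}(a_{ij}-\ell)+\tfrac{\kappa}{4}\sum_{i<j}(a_{ij}-\ell)^2 .
\]

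Next I would convert each sum into the desired form. For the linear term, Heron's formula makes $\la(\t)$ a smooth function of $(a_{12},a_{23},a_{31})$ whose gradient at the symmetric point $(\ell,\ell,\ell)$ is $(\tfrac{\ell}{2\sqrt 3},\tfrac{\ell}{2\sqrt 3},\tfrac{\ell}{2\sqrt 3})$ by symmetry; inverting the linearisation produces
\[
 \sum_{i<j}(a_{ij}-\ell)=\tfrac{2\sqrt 3}{\ell}\bigl(\la(\t)-\la(\st)\bigr)+O\!\Bigl(\sum_{i<j}(a_{ij}-\ell)^2\Bigr),
\]
and the quadratic remainder is dominated by the $\kappa$-term as long as $\al$ (which controls both $|\phi'(\ell)|$ and the range of the Taylor error) is small enough, which is part of hypothesis (b). For the quadratic term, \cite[Corollary~2.4]{hmr13} supplies a uniform constant $c>0$ with $\sum_{i<j}(a_{ij}-\ell)^2\ge c\,\dist(V,\sot)^2$ on $\Uept[]$; combined with the uniform upper bound on $\la(\t)$ from compactness of $\Uept[]$, this upgrades to $\sum_{i<j}(a_{ij}-\ell)^2\ge c'\,\la(\t)\,\dist(V,\sot)^2$. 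Assembling the two bounds yields the claim with a suitable $\ci>0$ and $\cii=\phi'(\ell)\sqrt 3/\ell\in\R$.

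The main obstacle is matching the precise formulation of \cite[Corollary~2.4]{hmr13}, which in HMR13 is written for the inverse map $\st\to\t$ and in slightly different normalisation, to the form needed here for $v_\t:\t\to\st$. Since $\la(\t)$ is bounded away from $0$ and $\infty$ on $\Uept[]$, the matrix $V$ is uniformly invertible and the quantities $\dist(V,\sot)$ and $\dist(V^{-1},\sot)$ are comparable up to multiplicative constants; once this translation is in hand, the rest of the proof is a direct combination of Taylor's expansion and the cited corollary.
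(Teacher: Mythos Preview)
Your argument is correct, but it takes a more hands-on route than the paper. The paper applies \cite[Corollary~2.4]{hmr13} as a single black box: that corollary already packages the Taylor expansion of $\phi$, the area correction $p(\ell)(\la(\t)-\la(\st))$ with $p(\ell)=2\sqrt{3}\phi'(\ell)/\ell$, and the comparison with $\dist(\ell^{-1}\nabla\omega,\sot)^2$; the only additional work in the paper is the translation $\ell^{-1}\nabla\omega=(\nabla v_\t)^{-1}$ and the comparability $\dist(A,\sot)\asymp\dist(A^{-1},\sot)$ near $\sot$, which you also identify as the main obstacle. You instead redo the Taylor expansion and the Heron linearisation yourself and invoke \cite{hmr13} only for the purely geometric bound $\sum_{i<j}(a_{ij}-\ell)^2\gtrsim\dist(V,\sot)^2$. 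Two remarks: first, that geometric bound is really the content of \cite[Lemma~2.3]{hmr13} rather than Corollary~2.4, which already has the $\phi$-terms built in; second, your absorption of the Heron remainder into the $\kappa$-term uses $|\phi'(\ell)|=O(\al)$, which is an extra smallness requirement on $\al$ beyond ``Corollary~2.4 holds'' --- harmless in practice, but worth making explicit. Your approach is in fact exactly the strategy the paper employs for the cubic lattice (Lemma~\ref{lem:conHloccubic}), so it has the advantage of being self-contained and of generalising directly, at the cost of repeating work that \cite{hmr13} already did for triangles.
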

\begin{proof}
 This is a more or less direct consequence of Corollary~2.4 in \cite{hmr13}. Let $x_1$, $x_2$ and $x_3$ be the corners of $\t\in\Uept$. By the definition of $\hloc$, we have
 \begin{equation} \label{eq:conHlocfortriangle1}
  \hloc(\t)-\hloc(\st) = \2\big(\phi(|x_1-x_2|)+\phi(|x_2-x_3|)+\phi(|x_3-x_1|)-3\phi(\ell)\big)\,.
 \end{equation}
 Moreover, \cite[Corollary~2.4]{hmr13} states (in our notation)
 \begin{equation} \label{eq:conHlocfortriangle2}
  \phi(|x_1-x_2|)+\phi(|x_2-x_3|)+\phi(|x_3-x_1|)-3\phi(\ell)-p(\ell)\big(\la(\t)-\la(\st)\big) \asymp_\phi \dist(\ell^{-1}\nabla\om,\sot)^2
 \end{equation}
 where $p(\ell)=2\sqrt{3}\phi'(\ell)/\ell$ and $\om$ is the affine linear map mapping $0\mapsto x_1$, $1\mapsto x_2$ and $\tau\mapsto x_3$. Since $v_{\scriptscriptstyle\t}$ is the affine linear map mapping $x_1\mapsto 0$, $x_2\mapsto \ell$ and $x_3\mapsto \ell\tau$, we conclude 
 \begin{equation} \label{eq:conHlocfortriangle3}
  v_{\scriptscriptstyle\t}\circ\om=\ell\operatorname{Id} \qquad\text{and therefore}\qquad  \ell^{-1}\nabla\om = (\nabla v_{\scriptscriptstyle\t})^{-1} \,.
 \end{equation}
 Now we use the following fact: For all $A\in\R^{2\times2}$ which are close to $\sot$ one has
 $$  \dist(A^{-1},\sot)^2 \asymp \dist(A,\sot)^2 \,.$$
 Applying this fact to $A=\nabla v_{\scriptscriptstyle\t}$, which is close to $\sot$ as $|x_j-s_j|\le\ep$ ($j=1,2,3$), yields
 \begin{equation} \label{eq:conHlocfortriangle4}
  \dist(\ell^{-1}\nabla\om,\sot)^2 \asymp \dist(\nabla v_{\scriptscriptstyle\t},\sot)^2 \,.
 \end{equation}
 since $(\nabla v_{\scriptscriptstyle\t})^{-1}=\ell^{-1}\nabla\om$ by \eqref{eq:conHlocfortriangle3}. Combining equations \eqref{eq:conHlocfortriangle1}, \eqref{eq:conHlocfortriangle2}, \eqref{eq:conHlocfortriangle4} and $\la(\t)\asymp1$ yields the lemma.
\end{proof}
We recall the definition of the Hamiltonian and the probability measure in the end of Section~\ref{sec:modeldef}. Thereto let $\be>0$, $\s>0$, $m\ge m_0$ and $N\in\N$. We define the Hamiltonian
$$ H_\sn\,:=\, \sum_{\t\in\T} \hloc(\t) + \s S - m |\P| $$
and the probability measure $P_\bsn$ via
$$ \frac{dP_\bsn}{d\mu} \,:=\, \frac{1}{Z_\bsn} e^{-\be H_\sn} \qquad\text{with}\quad Z_\bsn \,:=\, \int_\Om e^{-\be H_\sn} \,d\mu \,.$$
One may be bothered by the fact that edges inside the crystal $\T$ appear twice in the Hamiltonian whereas boundary edges appear only once. But this disturbance can be fixed using the following alternative tilde-versions. Let us define the Hamiltonian 
$$ \ti{H}_\sn\,:=\, \sum_{\substack{x,y\in\P\\ x\sim y \text{ in}\,\T}} \phi\big(|x-y|\big) + \s |\del\P| - m |\P| \,.$$
where $x\sim y$ in $\T$ iff there exists $\t\in\T$ with $x,y\in\t$ and $x\ne y$. Then the probability measure $\ti{P}_\bsn$ is defined via
$$ \frac{d\ti{P}_\bsn}{d\mu} \,:=\, \frac{1}{\ti{Z}_\bsn} e^{-\be \ti{H}_\sn} \qquad\text{with}\quad \ti{Z}_\bsn \,:=\, \int_\Om e^{-\be \ti{H}_\sn} \,d\mu \,.$$
We denote the expectation with respect to $P_\bsn$ with $E_\bsn$ and the expectation with respect to $\ti{P}_\bsn$ with $\ti{E}_\bsn$.

Then we have the following corollary to Theorem~\ref{thm:symmetrybreaking}.
\begin{corol} \label{cor:symmetrybreakingtriangle}
 There exist $m_0\in\R$ and $\s_0(N,m)\asymp N^2 + m$ such that the rotational symmetry of the crystal is broken in the following sense: 
 $$\forall\, m\ge m_0:\quad\adjustlimits\lim_{\be\to\infty} \limsup_{N\to\infty} \sup_{\s\ge\s_0(N,m)} 
  E_\bsn\bigg[ \inf_{R\in\sot} \frac{1}{|\T|}\sum_{\t\in\T} \|V-R\|^2_{L^2(\t)}\bigg] \;=\; 0 $$
 as well as
 $$\forall\, m\ge m_0:\quad\adjustlimits\lim_{\be\to\infty} \limsup_{N\to\infty} \sup_{\s\ge\s_0(N,m)} 
  \ti{E}_\bsn\bigg[ \inf_{R\in\sot} \frac{1}{|\T|}\sum_{\t\in\T} \|V-R\|^2_{L^2(\t)}\bigg] \;=\; 0 $$
 holds.
\end{corol}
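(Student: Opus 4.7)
For the first claim the strategy is simply to check that the construction (a)--(f) supplies the six components listed at the beginning of Section~\ref{sec:modeldef}, and then to invoke Theorem~\ref{thm:symmetrybreaking}. The tessellation $\M$ is $B_0$-periodic for the rhombus $B_0$ with sides $\ell$ and $\ell\tau$, locally finite, with a single tile type $I=\{1\}$; the parameters $\ep,\rho,\co$ lie in admissible ranges; the local Hamiltonian $\hloc$ is continuous because $\phi$ is and every perturbed edge length lies in $[\ell-2\ep,\ell+2\ep]\subset(1-\al,1+\al)$; inequality \eqref{eq:conHloc} is exactly Lemma~\ref{lem:conHlocfortriangle}; and $S=|\del\P|$ fulfils \eqref{eq:conS} trivially with $\ciii=1$. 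Theorem~\ref{thm:symmetrybreaking} then delivers the first statement.

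For the second claim the plan is to reduce the assertion for $\tilde{H}_\sn$ to the one just established for $H_\sn$. The crucial identity is
$$ \tilde{H}_\sn(\om)-H_\sn(\om) \,=\, \tfrac12\!\!\sum_{\substack{e\text{ edge of }\T\\ e\subset\del\UT}}\!\!\phi(|e|) \,=:\, \Delta(\om), $$
because each interior edge of $\T$ contributes weight $1$ to both $H_\sn$ (as $\tfrac12+\tfrac12$ from its two adjacent tiles) and $\tilde{H}_\sn$, whereas each boundary edge contributes $\tfrac12$ to $H_\sn$ but $1$ to $\tilde{H}_\sn$. Since the standard configuration $\sta$ on the torus has no boundary edges, $\Delta(\sta)=0$ and hence $\tilde{H}_\sn(\sta)=H_\sn(\sta)$.

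Because $\phi$ is continuous on the compact interval $[\ell-2\ep,\ell+2\ep]$ it is bounded there by some $K>0$, and the number of boundary edges of $\T$ is at most $3|\del\T|$, so $|\Delta(\om)|\le\tfrac{3K}{2}|\del\T|\le\tfrac{3K\cix}{2}\big(|\P|-\sum_{i\in I}\g_i|\T^i|\big)$ by Lemma~\ref{lem:estBoundary}(b). Inserting $\tilde{H}_\sn-\tilde{H}_\sn(\sta)=(H_\sn-H_\sn(\sta))+\Delta$ into the proof of Lemma~\ref{lem:dist-estimate} changes only the additive constant $\cxviii$; the subsequent proofs of Lemmas~\ref{lem:la-estimate}--\ref{lem:H-estimate} then go through unchanged, so an analogue of Lemma~\ref{lem:H-estimate} holds for $\tilde{H}_\sn$ with a threshold $\tilde{\s}_0(N,m)$ still of the form $\cxxvii N^2+\cxxviii m+\text{const}$. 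The partition sum lower bound of Lemma~\ref{lem:partitionsum} carries over because on the set $A_r$ appearing there the configurations are defect-free, hence have no boundary edges, and thus $\tilde{H}_\sn\equiv H_\sn$ on $A_r$. The proofs of Lemma~\ref{lem:EH-H} and of Corollary~\ref{cor:limits} then apply verbatim to $\tilde{E}_\bsn$, yielding the second claim.

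The main delicate point is that $\Delta$ may be negative, so it must be genuinely absorbed into the $(\s-\s_0)$-term of Lemma~\ref{lem:dist-estimate}; this succeeds precisely because $|\Delta|$ is controlled by the same surface quantity $|\P|-\sum_{i\in I}\g_i|\T^i|$ whose coefficient already carries the factor $\s\ciii$, so that the scaling $\s_0\asymp N^2+m$ is preserved.
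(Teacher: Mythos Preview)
Your argument is correct. The first claim is handled identically to the paper. For the second claim, both you and the paper control the difference $\tilde{H}_\sn-H_\sn$ by a surface quantity and then observe that on the set $A_r$ (and at $\sta$) this difference vanishes, so the partition-sum lower bound is unaffected. The substantive idea is the same.

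The packaging differs. You bound $|\Delta|$ by a constant times $|\P|-\sum_i\g_i|\T^i|$ and absorb it into the constant $\cxviii$ of Lemma~\ref{lem:dist-estimate}, then re-run Lemmas~\ref{lem:s0-estimate}--\ref{lem:EH-H} for $\tilde{H}$. The paper instead uses that $|\tilde{H}_\sn-H_\sn|\le\clv|\del\P|=\clv S$ is proportional to the very surface term $S$ appearing in $H_\sn$, so that $H_{\s+\clv,m,N}\ge\tilde{H}_\sn\ge H_{\s-\clv,m,N}$: the difference is absorbed by a \emph{shift of $\s$} rather than by modifying a constant inside the lemma chain. This yields $\tilde{Z}_\bsn\ge Z_{\be,\s+\clv,m,N}$ and $e^{-\be\tilde{H}_\sn}\le e^{-\be H_{\s-\clv,m,N}}$, and since the bound of Lemma~\ref{lem:partitionsum} does not depend on $\s$ (because $H_\sn(\sta)$ is $\s$-independent), the mismatch between $\s+\clv$ in the denominator and $\s-\clv$ in the numerator is harmless. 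The paper's route is shorter and avoids re-deriving the intermediate lemmas; your route is more explicit about which step absorbs the perturbation. Both lead to the same threshold shape $\s_0\asymp N^2+m$.
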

\begin{proof}
 For $E_\bsn$, this is exactly the statement of Theorem~\ref{thm:symmetrybreaking}. For $\ti{E}_\bsn$, we observe that
 $$ \big|H_\sn-\ti{H}_\sn\big| \lle \Big|\!\sum_{\substack{x,y\in\del\P\\ x\sim y \text{ in}\,\T}} \!\!\2\phi\big(|x-y|\big)\Big| \lle \clv |\del\P|=\clv S$$
 with $\clv:=3\sup_{t\in[1-\al,1+\al]}|\phi(t)|$. Therefore
 $$ \ti{H}_\sn \gge H_\sn-\clv S \,=\, \sum_{\t\in\T}\hloc(\t)+(\s-\clv)S-m|\P| \,=\, H_{\s-\clv,m,N} $$
 and analogously
 $$ \ti{H}_\sn \lle H_{\s+\clv,m,N} \,.$$
 Thus $\ti{Z}_\bsn \ge Z_{\be,\s+\clv,m,N}$ and
 $$ \ti{E}_\bsn\big[ \inf_{R\in\sot} \tfrac{1}{|\T|}\sum_{\t\in\T} \|V-R\|^2_{L^2(\t)}\big]
   \lle \int_\Om \frac{e^{-\be H_{\s-\clv,m,N}}}{Z_{\be,\s+\clv,m,N}} \inf_{R\in\sot} \tfrac{1}{|\T|}\sum_{\t\in\T} \|V-R\|^2_{L^2(\t)} \,d\mu $$
 Moreover, we observe that the lower bound of the partition sum in Lemma~\ref{lem:partitionsum} does not depend on $\s$ since $H_\sn(\sta)$ is independent of $\s$. Thus we can apply the proof of Theorem~\ref{thm:symmetrybreaking} for $P_{\be,\s-\clv,m,N}$ if $\s-\clv\ge\s_0(N,m)$ to conclude that the appropriate limit of the right hand side is $0$. Therefore the corollary for $\ti{E}_\sn$ follows if we enlarge $\s_0(N,m)$ by $\clv$.
\end{proof}

\subsubsection{Cubic Lattice in $d$ Dimensions}
Finally we give an example on the cubic lattice in dimension $d\ge2$. First we note that a cube is not stabilized by fixing all its edge lengths: it can be arbitrarily flat. Thus there is no chance to be close to $\sod$ if only the edge lengths are specified. Therefore we specify the lengths of the diagonals, too. Though not required, we use all diagonals in order to simplify the presentation. The following model is quite similar to the model on the triangular lattice; thus we do not present all technical details.

We define $D:=\{A\subset\{1,\ldots,2^d\}:|A|=2\}$. This set is used to index a pair or ``double'' of vertices of a cube, or the corresponding edge or diagonal. We shortly write $kj\in D$ for $\{k,j\}\in D$. Similarly as for the model on the triangular lattice, we fix
\begin{enumerate}[\hspace{1.5em}(a)]
 \item 
 a tuple of real-valued potential functions $\phi_{kj}$, $kj\in D$, defined in an open interval containing $1$ such that each $\phi_{kj}$ is twice continuously differentiable with $\phi_{kj}''>0$ and $\phi_{kj}'(1)=0$,
 \item
 an $\al>0$ so small that each $\phi_{kj}$ is defined on $[1-\al,1+\al]$ and that Lemma~\ref{lem:conHloccubic} below holds and
 \item
 an $\ell\in(1-\al/2,1+\al/2)$.
\end{enumerate}
Using this input, we define the model according to the set-up in Section~\ref{sec:modeldef}. First we choose the parameters $\ep\in(0,\frac\al4)$, $\rho\in(0,\frac{\ell}{3})$ and $\co>0$ arbitrary. The tessellation $\M$ will be induced by the lattice $\ell\Z^d$. Again there is only one tile type such we can omit the superscript $i$. The standard tile $\st$ is the cube $\st=\{(z_1,\dots,z_d)\in\R^d\mid 0\le z_1,\dots,z_d \le \ell\}$; its corners are denoted by $s_1,\ldots,\cramped{s_{2^d}}$. Then $\M:=\{z+\st\mid z\in \ell\Z^d\}$. If a perturbed cube $\t\in\Uept[]$ has corners $x_1,\ldots,\cramped{x_{2^d}}$, we define its local Hamiltonian using the given potential functions as follows:
$$ \hloc(\t) \,:=\, \sum_{kj\in D} \phi_{kj}\left(\frac{|x_k-x_j|}{\cramped{\ell^{-1}}|s_k-s_j|}\right) \,.$$
Thus we allow different potentials for different edges or diagonals. Similarly to the example on the triangular lattice we conclude that $\hloc$ is well-defined and continuous; Lemma~\ref{lem:conHloccubic} below shows that inequality \eqref{eq:conHloc} is fulfilled. Again we define the quantity $S$ measuring the surface of the crystal by
  $ S := |\del\P|$
such that condition \eqref{eq:conS} is obviously fulfilled. We still need
\begin{lem} \label{lem:conHloccubic}
 For sufficiently small $\al>0$, there are constants $\ci>0$ and $\cii\in\R$ such that inequality \eqref{eq:conHloc} holds for all $\t\in\Uept[]$, i.e.\
 $$ H_\text{\emph{loc}}(\t)-H_\text{\emph{loc}}(\st)\,\ge\, \ci \|\dist(\nabla v_{\scriptscriptstyle\t},\sod)\|^2_{L^2(\t)} + \cii\big(\la(\t)-\la(\st)\big)\,,$$
 where $v_{\scriptscriptstyle\t}$ is the affine linear map mapping $\t$ to $\st$.
\end{lem}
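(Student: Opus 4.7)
The plan is to follow the structure of the proof of Lemma~\ref{lem:conHlocfortriangle}, with the invocation of \cite[Corollary~2.4]{hmr13} replaced by a direct Taylor-expansion argument tailored to the cube. First I would parametrise the corners of $\t\in\Uept[]$ as $x_k=s_k+u_k$ with $|u_k|\le\ep$, and note that for $\al$ small, every argument $t_{kj}=\ell|x_k-x_j|/|s_k-s_j|$ stays in a fixed compact neighbourhood of $1$ on which each $\phi_{kj}''$ is uniformly bounded below by a positive constant. Taylor-expanding each $\phi_{kj}$ around $\ell$ to second order in $u$ yields
$$\hloc(\t)-\hloc(\st)=L(u)+Q(u)+O(|u|^3),$$
with $L$ linear and $Q$ positive semidefinite quadratic in $u$; analogously one obtains $\la(\t)-\la(\st)=L_\la(u)+Q_\la(u)+O(|u|^3)$ and $\|\dist(\nabla v_{\t},\sod)\|^2_{L^2(\t)}=\wti{Q}(u)+O(|u|^3)$, where $\wti{Q}$ is purely quadratic since $\nabla v_{\t}|_{u=0}=\operatorname{Id}\in\sod$.

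Next I would choose $\cii\in\R$ so that $L-\cii L_\la\equiv 0$. By translation and rotation invariance of $\hloc$ and $\la$, both linear forms annihilate the subspace of infinitesimal rigid motions $u_k=b+Ws_k$ (with $W$ antisymmetric). On the complementary space of linear strains $u_k=As_k$ they reduce to $\ell\langle A_{\mathrm{sym}},M\rangle$ with $M=\sum_{kj}\phi'_{kj}(\ell)\hat{e}_{kj}\hat{e}_{kj}^T$ and to $\la(\st)\operatorname{tr}(A_{\mathrm{sym}})$, respectively. The full point-symmetry group of the cube then forces $M$ to be a scalar multiple of the identity, so a single $\cii$ matches them; for fully arbitrary $\phi_{kj}$ without this symmetry compatibility the discrepancy is $O(|\ell-1|)$ and can be absorbed by shrinking $\al$ further. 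The residual quadratic form $P:=Q-\cii Q_\la$ is then the second-order part of $\hloc(\t)-\hloc(\st)-\cii(\la(\t)-\la(\st))$.

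The last and main step is to show that $P$ is coercive on the quotient by the infinitesimal rigid motions and dominates $\wti{Q}$ there. Coercivity is precisely the statement that the cube, viewed as a framework with all $\binom{2^d}{2}$ pairwise distances constrained, is infinitesimally rigid --- a linear-algebraic fact that crucially requires the diagonals, since for $d\ge2$ the cube admits nontrivial shears preserving all edge lengths. Granted coercivity, $\wti{Q}$ is a nonnegative quadratic form with exactly the same kernel, and a compactness argument on the unit sphere in the quotient gives $P(u)\ge 2\ci\,\wti{Q}(u)$ for some $\ci>0$. For $\al$ small the cubic remainders on both sides are dominated by the quadratic forms, so the leading-order inequality extends to the desired nonlinear one. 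I expect the coercivity/rigidity step to be the main obstacle, together with the more delicate matching of the linear forms $L$ and $L_\la$ in the choice of $\cii$, which is where the cube's symmetries implicitly enter and presumably accounts for the technical detail the author suppresses.
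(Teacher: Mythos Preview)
Your quadratic/coercivity step is essentially right and parallels what the paper does simplex by simplex, but the handling of the linear term has a genuine gap. The decomposition of corner perturbations into infinitesimal rigid motions $u_k=b+Ws_k$ plus ``linear strains'' $u_k=As_k$ is incomplete: for the cube these together span only a $(d+d^2)$-dimensional subspace of the full $d\cdot 2^d$-dimensional displacement space, so for $d\ge 2$ there are additional bending modes on which you never analyse $L$ or $L_\la$. In particular there is no reason $L$ and $L_\la$ should be proportional for arbitrary potentials $\phi_{kj}$ (the paper imposes no cube-symmetry on them), so no single $\cii$ achieves $L-\cii L_\la\equiv 0$. Your fallback of ``absorbing the $O(|\ell-1|)$ discrepancy by shrinking $\al$'' cannot work as stated either: a nonzero linear form, however small its coefficients, dominates any quadratic form near the origin, so $P(u)+(L-\cii L_\la)(u)\ge 2\ci\,\wti{Q}(u)$ fails for small $u$ in a bad direction transverse to the rigid motions.

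The paper sidesteps both issues by working in the scaled-distance variables $\xi_{kj}=\ell|x_k-x_j|/|s_k-s_j|$ rather than in $u$, and by routing the first-order term into the \emph{volume} difference instead of the quadratic form. Since $\phi_{kj}'(1)=0$ one has $|\phi_{kj}'(\ell)|\le \al\sup|\phi_{kj}''|$, whence $\sum_{kj}\phi_{kj}'(\ell)(\xi_{kj}-\ell)\ge -C\al\sum_{kj}|\xi_{kj}-\ell|$; a Taylor expansion of the volume as a function of the $\xi_{kj}$ then bounds $\sum_{kj}|\xi_{kj}-\ell|$ by a constant times $\la(\t)-\la(\st)$ plus an $O(\sum(\xi_{kj}-\ell)^2)$ error (here the constraint $\la(\t)\ge\la(\st)$ is used). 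This yields $\cii$ of order $-\al$, not chosen to cancel anything, and leaves a clean positive quadratic $\sum_{kj}(\xi_{kj}-\ell)^2$ (the $O(\al)$ correction to it is absorbed for small $\al$). The comparison of $\sum_{kj}(\xi_{kj}-\ell)^2$ with $\|\dist(\nabla v_{\scriptscriptstyle\t},\sod)\|^2_{L^2(\t)}$ is then done simplex by simplex via the norm $\|Q\|_s^2=\sum_{kj}\langle s_k-s_j,Q(s_k-s_j)\rangle^2$ on symmetric matrices, which is where your infinitesimal-rigidity observation reappears in the paper's argument.
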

\begin{proof}
 The proof is quite similar to the proofs of Lemma~2.2, Lemma~2.3 and Corollary~2.4 in \cite{hmr13}. In fact, it generalises their arguments to higher dimensions. Therefore, we present not all technical details.

 Let a tile $\t\in\Uept[]$ with corners $x_1,\ldots,x_{\cramped{2^d}}$ be given. We abbreviate
 $$ \xi_{kj}:=\frac{|x_k-x_j|}{\cramped{\ell^{-1}}|s_k-s_j|} $$
 for $kj\in D$. There exists a twice continuously differentiable function
 $$f:\R_+^{|D|}\to\R \qquad\text{ with }\qquad \la(\t)=f\big(\xi_{kj}:kj\in D\big) $$
 for $\t\in\Uept[]$. Using a Taylor expansion around $(\ell,\ldots,\ell)$, we conclude
 $$ \la(\t)-\la(\st) = \sum_{kj\in D} \del_{kj}f(\ell,\dots,\ell)\,(\xi_{kj}-\ell) + O\big(\textstyle\sum\displaystyle(\xi_{kl}-\ell)^2\big) \,.$$
 Note that $b:=\inf_{kl}\del_{kj}f(\ell,\dots,\ell)>0$ since increasing an edge length increases the volume. It follows that
 $$ b\sum_{kj\in D} |\xi_{kj}-\ell| \lle \sum_{kj\in D} \del_{kj}f(\ell,\dots,\ell)\,|\xi_{kj}-\ell| \lle
   |\la(\t)-\la(\st)| + O\big(\textstyle\sum\displaystyle(\xi_{kl}-\ell)^2\big)\,.$$
 Now we use $\sup\!|\phi_{kj}'(l)|\le\al\sup\!|\phi_{kj}''(l)|$, where the suprema are taken over all $kj\in D$ and $l\in[1-\al/2,1+\al/2]$, to conclude
 \begin{eqnarray*}
  \sum_{kj\in D} \phi'(\ell)(\xi_{kj}-\ell)
  &\ge& -\sup\!|\phi_{kj}'(l)|\sum_{kj\in D} |\xi_{kj}-\ell| \\
  &\ge& 
  -\tfrac{\al}{b}\sup\!|\phi_{kj}''(l)| \Big(\la(\t)-\la(\st) + O\big(\textstyle\sum\displaystyle(\xi_{kl}-\ell)^2\big)\Big)\,.
 \end{eqnarray*}
 Note that we can ignore the absolute value of $|\la(\t)-\la(\st)|$ since $\la(\t)\ge\la(\st)$ holds for all $\t\in\Uept[]$. Applying Taylor's Theorem to $\phi_{kj}$, $kj\in D$, yields with the just obtained estimate
 \begin{eqnarray}
  \hloc(\t)-\hloc(\st)
  &=&
  \sum_{kj\in D} \Big(\phi_{kj}(\xi_{kj})-\phi_{kj}(\ell)\Big) \notag\\
  &=&
  \sum_{kj\in D} \Big(\phi_{kj}'(\ell)\,(\xi_{kj}-\ell) + \2\phi_{kj}''(\ell)\,(\xi_{kj}-\ell)^2 + o\big((\xi_{kj}-\ell)^2\big)\Big) \notag\\
  &\ge&
  -\tfrac{\al}{b}\sup\!|\phi_{kj}''(l)| \big(\la(\t)-\la(\st)\big)+ \inf\!\big[\2\phi_{kj}''(l)\big]\sum\nolimits_{kj\in D}(\xi_{kj}-\ell)^2 \notag\\
  && +\, o\big(\textstyle\sum\displaystyle(\xi_{kj}-\ell)^2\big) - \tfrac{\al}{b}\sup\!|\phi_{kj}''(l)|\,O\big(\textstyle\sum\displaystyle(\xi_{kl}-\ell)^2\big) \notag\\
  &\ge&
  \cii \big(\la(\t)-\la(\st)\big) + \clvi \sum_{kj\in D}(\xi_{kj}-\ell)^2 \label{eq:last}
 \end{eqnarray}
 with $\cii=-\al\sup\!|\phi_{kj}''(l)|/b\in\R$ and some constant $\clvi>0$ for small enough $\al>0$ since $\inf\2\phi_{kj}''(l)>0$, where the infimum is taken over all $kj\in D$ and $l\in[1-\al/2,1+\al/2]$.
 
 It remains to bound
 $\sum_{kj\in D}(\xi_{kj}-\ell)^2$ in terms of $\|\dist(\nabla v_{\scriptscriptstyle\t},\sot)\|^2_{L^2(\t)}$.
 Thereto we consider any simplex $\triangle\subset\t$ such that $v_{\scriptscriptstyle\t}$ is affine linear on $\triangle$. Let $\ti{D}=\ti{D}_\triangle\subset D$ denote the corresponding set of vertex pairs of the simplex. Let $kj\in\ti{D}$. Setting $M:=\cramped{(\nabla v_{\scriptscriptstyle\t})^{-1}}$, which is constant on $\triangle$, yields $x_k-x_j=M(s_k-s_j)$ as $v_{\scriptscriptstyle\t}$ maps $x_k$ to $s_k$ and $x_j$ to $s_j$. Using also $\ell^{-2}|s_k-s_j|^2\asymp1$ we conclude
 \begin{eqnarray*}
  \xi_{kj}-\ell
  &\asymp&
  \ell^{-2}|s_k-s_j|^2 (\xi_{kj}^2-\ell^2)
  \,=\,
  |x_k-x_j|^2-|s_k-s_j|^2 \\
  &=&
  |M(s_k-s_j)|^2-|s_k-s_j|^2
  \,=\,
  \langle (s_k-s_j), (M^*M-\operatorname{Id})(s_k-s_j) \rangle
 \end{eqnarray*}
 We define a norm $\|Q\|_s$ of a symmetric $d\times d$-matrix $Q$ by
 $$ \|Q\|_s := \sqrt{\sum_{kj\in\ti{D}}\langle (s_k-s_j), Q(s_k-s_j) \rangle^2} \,.$$
 This is obviously a semi-norm; since $(s_k-s_j)$, $kj\in\ti{D}$, are the edges of a simplex, it even is a norm. As in the proof of \cite[Lemma~2.3]{hmr13} we conclude $\|M^*M-\operatorname{Id}\|_s\asymp\dist(M,\sod)$. Thus we have shown that
 $$ \sum_{kj\in\ti{D}} (\xi_{kj}-\ell)^2 \,\asymp\,\|M^*M-\operatorname{Id}\|_s^2\,\asymp\,\dist(M,\sod)^2\,\asymp\,\dist(\nabla v_{\scriptscriptstyle\t},\sod)^2 $$
 since $\nabla v_{\scriptscriptstyle\t}=M^{-1}$ is close to $\sod$ (for small $\al$) because $\t$ is an $\ep$-perturbation of $\st$. Using the facts that the Lebesgue measure of any simplex of $\t$ is of order $1$ and that each diagonal belongs only to a finite number of simplexes, we conclude
 \begin{eqnarray*}
  \sum_{kj\in D} (\xi_{kj}-\ell)^2
  &\gtrsim& \adjustlimits\sum_{\triangle} \sum_{kj\in\ti{D}_\triangle}\! (\xi_{kj}-\ell)^2 \,\asymp\, \sum_{\triangle} \|\dist(\nabla v_{\scriptscriptstyle\t},\sod)\|^2_{L^2(\triangle)} \\
  &=& \|\dist(\nabla v_{\scriptscriptstyle\t},\sod)\|^2_{L^2(\t)} \,.
 \end{eqnarray*}
 Inserting this inequality into \eqref{eq:last} completes the proof.
\end{proof}

It follows that all assumptions in Section~\ref{sec:modeldef} are fulfilled. Therefore the very last corollary needs no further proof.
\begin{corol}
 The rotational symmetry of the crystal model on the cubic lattice introduced above is broken in the sense of Theorem~\ref{thm:symmetrybreaking}. \hfill\qed 
\end{corol}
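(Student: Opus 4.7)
The plan is essentially bookkeeping: I would verify that the cubic-lattice model just described satisfies every item in the list at the start of Section~\ref{sec:modeldef}, and then invoke Theorem~\ref{thm:symmetrybreaking} verbatim.

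First I would dispatch the tessellation and parameters. The tessellation $\M=\{z+\st\mid z\in\ell\Z^d\}$ is locally finite, $B_0$--periodic with $B_0=\ell[0,1]^d$ (taking $L=\ell\operatorname{Id}$), and has a single tile type, giving $I=\{1\}$. Its tiles are closed polytopes, and since the minimal distance between disjoint cubes in $\M$ equals $\ell$, one has $\rho_\text{max}=\ell/3$, which comfortably accommodates the chosen $\rho\in(0,\ell/3)$; the parameters $\ep\in(0,\al/4)$ and $\co>0$ are otherwise free. Continuity of $\hloc$ on $\Uept[]$ follows from the $C^2$--smoothness of each $\phi_{kj}$ on $[1-\al,1+\al]$, together with the observation that for any $\t\in\Uept[]$ with corners $x_1,\ldots,x_{2^d}$ the ratios $|x_k-x_j|/(\ell^{-1}|s_k-s_j|)$ remain inside $[1-\al,1+\al]$ because $\ep<\al/4$. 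The coercivity bound~\eqref{eq:conHloc} required for item~5 is exactly what Lemma~\ref{lem:conHloccubic} delivers, and for item~6 the choice $S:=|\del\P|$ trivially satisfies~\eqref{eq:conS} with $\ciii=1$, with $S=0$ iff $\del\P=\emptyset$.

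With every hypothesis verified, Theorem~\ref{thm:symmetrybreaking} applies without modification and yields the claim. The only substantive step is Lemma~\ref{lem:conHloccubic}, which has already been established in the preceding pages; consequently no genuine obstacle remains, and the proof collapses to citing those two results in sequence.
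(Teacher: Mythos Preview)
Your proposal is correct and matches the paper's approach exactly: the paper also dispenses with any further argument, noting that the preceding construction and Lemma~\ref{lem:conHloccubic} verify all the hypotheses of Section~\ref{sec:modeldef}, so Theorem~\ref{thm:symmetrybreaking} applies directly. Your explicit checklist is a more detailed version of what the paper does implicitly before stating the corollary with its bare \qed.
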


\subsection*{Acknowledgement}
The author is grateful to Franz Merkl for stimulating discussions and helpful remarks. This research was supported by a scholarship of the Cusanuswerk, one of the German national academic foundations.

\end{document}